\newtheorem{theorem}{Theorem}[section]
\newtheorem{lemma}[theorem]{Lemma}
\newtheorem{prop}[theorem]{Proposition}
\newtheorem{cor}[theorem]{Corollary}
\newtheorem{corollary}[theorem]{Corollary}
\theoremstyle{remark}
\newtheorem{definition}[theorem]{Definition}
\renewcommand\subset{\subseteq}
\newcommand{\cP}{\mathcal{P}}
\newcommand{\et}{\quad\mbox{and}\quad}
\newcommand{\bQ}{\mathbb{Q}}
\newcommand{\bR}{\mathbb{R}}
\newcommand{\bZ}{\mathbb{Z}}
\newcommand{\cO}{{\mathcal{O}}}
\newcommand{\cU}{{\mathcal{U}}}
\newcommand{\disp}{\displaystyle}
\newcommand{\End}{\mathrm{End}}
\newcommand\GrO{\mathcal{O}}
\newcommand{\hlambda}{\widehat{\lambda}}
\newcommand{\norm}[1]{\|#1\|}
\newcommand{\Span}[1]{\langle\,#1\rangle}
\renewcommand\theta{\vartheta}
\newcommand{\ua}{\mathbf{a}}
\newcommand{\ualpha}{\boldsymbol{\alpha}}
\newcommand{\ud}{\mathbf{d}}
\newcommand{\uu}{\mathbf{u}}
\newcommand{\uv}{\mathbf{v}}
\newcommand{\ux}{\mathbf{x}}
\newcommand{\uy}{\mathbf{y}}
\newcommand{\uz}{\mathbf{z}}
\newcommand{\UU}{\mathcal{U}}
\def\bx{{\bf x}}
\numberwithin{equation}{section}
\begin{document}

\baselineskip=17pt

\title[Simultaneous rational approximation]%
{Simultaneous rational approximation to\\ successive powers of a real number}

\author{Anthony Po\"els}
\address{
   D\'epartement de math\'ematiques\\
   Universit\'e d'Ottawa\\
   150 Louis Pasteur\\
   Ottawa, Ontario K1N 6N5, Canada}
\email{anthony.poels@uottawa.ca}
\author{Damien Roy}
\address{
   D\'epartement de math\'ematiques\\
   Universit\'e d'Ottawa\\
   150 Louis Pasteur\\
   Ottawa, Ontario K1N 6N5, Canada}
\email{droy@uottawa.ca}
\subjclass[2010]{Primary 11J13; Secondary 11J82}
\thanks{Work of both authors partially supported by NSERC}

\keywords{Approximation by algebraic integers, exponents of Diophantine approximation, heights, simultaneous rational approximation.}

\begin{abstract}
We develop new tools leading, for each integer $n\ge 4$, to a
significantly improved upper bound for the uniform exponent of rational
approximation $\hlambda_n(\xi)$ to successive powers $1,\xi,\dots,\xi^n$
of a given real transcendental number $\xi$. As an application, we obtain
a refined lower bound for the exponent of approximation to $\xi$ by
algebraic integers of degree at most $n+1$.  The new lower bound is
$n/2+a\sqrt{n}+4/3$ with $a=(1-\log(2))/2\simeq 0.153$, instead of the
current $n/2+\GrO(1)$.
\end{abstract}

\maketitle

\section{Introduction}
\label{sec:Intro}

In their seminal 1969 paper \cite{DS1969}, Davenport and Schmidt introduce
a novel approach to study how well a given real number $\xi$
may be approximated by algebraic integers of degree at most $n+1$, for a
given positive integer $n$.
Using geometry of numbers, they show that if, for some $c>0$ and $\lambda>0$,
there are arbitrarily large values of $X$ for which the conditions
\begin{equation}
\label{intro:eq1}
 |x_0|\le X \et \max_{1\le k\le n}|x_0\xi^k-x_k|\le cX^{-\lambda}
\end{equation}
admit no non-zero integer solution $\ux=(x_0,\dots,x_n)\in\bZ^{n+1}$,
then for some $c'>0$ there are infinitely many algebraic integers $\alpha$
of degree at most $n+1$ such that
\[
 |\xi-\alpha|\le c' H(\alpha)^{-1/\lambda-1}
\]
where $H(\alpha)$ stands for the height of $\alpha$, namely the largest
absolute value of the coefficients of its irreducible polynomial
over $\bZ$.  Assuming that $\xi$ is not itself an algebraic number of
degree at most $n$, they further show that admissible values for $\lambda$ are
$\lambda=1$ if $n=1$, $\lambda=(-1+\sqrt{5})/2$ if $n=2$, $\lambda=1/2$
if $n=3$ and $\lambda=1/\lfloor n/2\rfloor$ if $n\ge 4$.  Since then,
their results have been extended to many other settings which include
approximation to $\xi$ by algebraic integers of a given degree
\cite{BT2000}, by algebraic units of a given degree \cite{Te2001} and
by conjugate algebraic integers \cite{RW2004}.  A $p$-adic analog is given
in \cite{Te2002} and an extension to a variety of inhomogeneous problems
is proposed in \cite{BL2005a}.  Refined values for
$\lambda$ have also been established by Laurent \cite{La2003}, by
Schleischitz \cite{Schl2020, Schl2021} and by Badziahin \cite{Ba2021}.

For each $\xi\in\bR$ and each integer $n\ge 1$, let $\hlambda_n(\xi)$
(resp.\ $\lambda_n(\xi)$) denote the supremum of all $\lambda \ge 0$
such that, for $c=1$, the conditions \eqref{intro:eq1} admit a non-zero
integer solution $\bx=(x_0,\dots,x_n)\in\bZ^{n+1}$ for each sufficiently
large $X$ (resp.\ for arbitrarily large values of $X$).  Further, let
$\tau_{n+1}(\xi)$ denote the supremum of all $\tau\ge 0$ for which
there exist infinitely many algebraic integers $\alpha$ of degree at
most $n+1$ with $|\xi-\alpha| \le H(\alpha)^{-\tau}$. Then, in crude
form, the observation of Davenport and Schmidt is that
\begin{equation}
\label{intro:eq:dualite}
 \tau_{n+1}(\xi) \ge \hlambda_n(\xi)^{-1}+1.
\end{equation}
Thus any upper bound on $\hlambda_n(\xi)$ yields a lower bound on $\tau_{n+1}(\xi)$.

Assume now that $\xi$ is not itself an algebraic number of degree at most $n$,
namely that $1,\xi,\dots,\xi^n$ are linearly independent over $\bQ$, or equivalently
that $[\bQ(\xi):\bQ]>n$.  Then a result of Dirichlet \cite[\S II.1, Theorem 1A]{Schm1980}
yields $1/n\le \hlambda_n(\xi)$ and, expecting the equality, it is natural to conjecture that
$\tau_{n+1}(\xi) \ge n+1$ as in \cite[\S 5]{Schm1983}.  For $n=1$, we have
$\hlambda_1(\xi)=1$ and the conjectured lower bound $\tau_2(\xi)\ge 2$ follows.
However, for $n=2$, the upper bound $\hlambda_2(\xi)\le (-1+\sqrt{5})/2\cong 0.618$
from \cite[Theorem 1a]{DS1969} is best possible by \cite[Theorem 1.1]{Ro2004}, and the corresponding lower bound $\tau_3(\xi)\ge (3+\sqrt{5})/2\cong 2.618$ is also best possible by
\cite[Theorem 1.1]{Ro2003} (see also \cite{RZ2011}).  This disproves the natural
conjecture for $n=2$ and suggests that it might be false as well for each $n\ge 3$.
Any counterexample $\xi$ would have $[\bQ(\xi):\bQ]>n$ and $\hlambda_n(\xi)>1/n$.
So, it would be transcendental over $\bQ$  according to Schmidt's subspace theorem
\cite[\S VI.1, Corollary 1E]{Schm1980}.  Although the existence of such number remains
an open problem for $n\ge 3$, we know by contrast large families of transcendental real
numbers $\xi$ with $\hlambda_2(\xi)>1/2$.  In chronological order, they are the extremal
numbers $\xi$ of \cite{Ro2004}, the Sturmian continued fractions of \cite{BL2005b},
Fischler's numbers from \cite{Fi2007}, the Fibonacci type numbers
of \cite{Ro2007} and the Sturmian type numbers of \cite{Po2020}, all contained in the very
general class of numbers studied in \cite{Po2021}.  In particular, we know
by \cite[Corollary]{Ro2007} that the values $\hlambda_2(\xi)$ with $\xi$ real and
transcendental form a dense subset of the interval $[1/2,(-1+\sqrt{5})/2]$.

For $n\ge 3$, recent progresses have been made on upper bounds for $\hlambda_n(\xi)$.
The estimates $\hlambda_3(\xi)\le 1/2$ and $\hlambda_n(\xi)\le 1/\lfloor n/2\rfloor$ for
$n\ge 4$ from \cite[Theorems 2a and 4a]{DS1969} have been refined by Laurent \cite{La2003}
to $\hlambda_n(\xi)\le 1/\lceil n/2\rceil$ for each $n\ge 3$, together with an important
simplification in the proof.  When $n=3$, the best computed upper bound (yet not
optimal) remains that of \cite{Ro2008},
\begin{equation}
    \label{intro:eq:lambda_3}
    \hlambda_3(\xi)\leq \alpha =0.4245\cdots,
\end{equation}
where $\alpha$ is the root of the polynomial $1-3x+4x^3-x^4$ in the interval $[1/3,1/2]$.
For even integers $n=2m\ge 4$, Schleischitz \cite{Schl2020, Schl2021}
refined the upper bound $\hlambda_n(\xi)\le 1/m$ by reducing to the case
where $\lambda_n(\xi)\le 1/m$ and then by using a transference inequality
of Marnat and Moshchevitin \cite{MM2020} relating $\lambda_n(\xi)$ and
$\hlambda_n(\xi)$.
Nevertheless, all those refinements, including those of the recent preprint
of Baziahin \cite{Ba2021}, are of the form $\hlambda_n(\xi)\le 1/(n/2+c_n)$
with $0 < c_n < 1$.  Our main result below improves significantly
on this when $n$ is large.

\begin{theorem}
\label{intro:thm:main}
For any integer $n\ge 2$ and any $\xi\in\bR$ with $[\bQ(\xi):\bQ]>n$, we have
\begin{equation}
\label{intro:thm:main:eq}
    \hlambda_n(\xi) \leq \frac{1}{n/2+a\sqrt{n}+1/3}.
\end{equation}
where $a=(1-\log(2))/2\cong 0.1534$.
\end{theorem}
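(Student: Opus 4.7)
The plan is to argue by contradiction. Suppose $\hlambda_n(\xi) > \lambda$ where $\lambda := 1/(n/2+a\sqrt{n}+1/3)$. Then, with $c=1$, the system \eqref{intro:eq1} admits a non-zero integer solution for every sufficiently large $X$. This yields an infinite sequence of \emph{minimal points} $\uu_1,\uu_2,\ldots \in \bZ^{n+1}\setminus\{0\}$, indexed by strictly increasing norms $X_i:=\|\uu_i\|_\infty$, whose approximation defects $L_i:=\max_{1\le k\le n}|u_{i,0}\xi^k-u_{i,k}|$ satisfy $L_i\le X_{i+1}^{-\lambda}$. The assumption $[\bQ(\xi):\bQ]>n$ guarantees that these vectors span $\bR^{n+1}$ and that none of them is annihilated by the linear form $(1,\xi,\dots,\xi^n)$.

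I would then pass to the language of parametric geometry of numbers. For each $Q>0$, consider the convex body
\[
 C_Q = \{\uy\in\bR^{n+1}:\ |y_0|\le Q,\ \max_{1\le k\le n}|y_0\xi^k-y_k|\le Q^{-\lambda}\}
\]
and its successive minima $\mu_1(Q)\le \cdots\le \mu_{n+1}(Q)$ relative to $\bZ^{n+1}$. The hypothesis forces $\mu_1(Q)\le 1$ for all large $Q$, while Minkowski's second theorem pins down the product $\prod_j \mu_j(Q)\asymp Q^{\lambda n -1}$. The arithmetic content is that the vectors realizing the minima have a very rigid structure: their coordinates are nearly proportional to $(1,\xi,\dots,\xi^n)$.

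To extract a sharper estimate than the classical $\hlambda_n(\xi)\le 1/\lceil n/2\rceil$ of Laurent, I expect one should \emph{chain} many consecutive minimal points rather than only a few. Fix an integer $m\approx\sqrt{n}$ to be optimized later, pick a carefully chosen subsequence $\uu_{i_1},\dots,\uu_{i_m}$ of minimal points, and examine the wedge $\uu_{i_1}\wedge\cdots\wedge\uu_{i_m}\in\bigwedge^m\bZ^{n+1}$. One compares two estimates for this wedge: an upper bound on its $\infty$-norm obtained from the Hadamard-type inequality for a matrix whose later rows approximate tangent directions to the Veronese curve $t\mapsto(1,t,\dots,t^n)$ at $t=\xi$ (so those rows are very small), and a lower bound coming from the fact that the wedge is a non-zero integer vector (using that the $\uu_{i_j}$ are linearly independent, which can be arranged by the choice of indices). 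Taking logarithms and summing contributions from $k=1,\dots,m$ should lead to an inequality of the shape
\[
 1 \ \ge\ \lambda\Bigl(\tfrac{n}{2} + a\,m + \tfrac{1}{3} + O(1/m)\Bigr),
\]
where the constant $a$ appears as a Riemann-sum limit, for instance $a=\int_{1/2}^1(-\log t)\,dt=(1-\log 2)/2$. Setting $m\sim\sqrt{n}$ then yields the claimed bound.

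The main obstacle, and where the ``new tools'' announced in the abstract must enter, is the simultaneous bookkeeping when $m$ grows with $n$: one needs sharp wedge-height inequalities that remain effective as the order of the exterior product increases, together with a selection procedure for the indices $i_1<\cdots<i_m$ which keeps each ratio $X_{i_{k+1}}/X_{i_k}$ large enough for the approximation bounds to bite yet small enough to prevent telescoping losses. The additive residue $1/3$ in the denominator is presumably an endpoint correction to the Riemann sum that one must track with care.
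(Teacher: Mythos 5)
Your high-level plan (argue by contradiction, form a sequence of minimal points, optimize a free integer parameter of order $\sqrt{n}$) does match the overall shape of the paper's argument, and your numerical guess $a=\int_{1/2}^1(-\log t)\,dt=(1-\log 2)/2$ agrees with the constant that finally comes out. However, the central mechanism you propose---forming the wedge $\uu_{i_1}\wedge\cdots\wedge\uu_{i_m}$ of a carefully chosen chain of $m\approx\sqrt{n}$ minimal points, comparing a Hadamard-type upper bound on its norm with the integrality lower bound $\ge 1$---is essentially the Davenport--Schmidt/Laurent argument, and it does not in itself improve on $\hlambda_n(\xi)\le 1/\lceil n/2\rceil$. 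Since $\Xi=(1,\xi,\dots,\xi^n)$ is a single vector, any wedge of $m$ points $\uu_{i_j}=u_{i_j,0}\Xi+\Delta_{i_j}$ contains at most one factor of $\Xi$, and the Hadamard bound reads $\|\uu_{i_1}\wedge\cdots\wedge\uu_{i_m}\|\ll\sum_j X_{i_j}\prod_{k\neq j}L_{i_k}$; no selection of indices turns this into a gain of order $\sqrt{n}$ in the exponent, which you yourself flag as ``the main obstacle.'' That obstacle is precisely where the proof must begin, not where it ends.

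The paper's actual new tool is qualitatively different from wedging: it introduces the dimension-reduction operator $\UU^\ell(A)\subseteq\bR^{n-\ell+1}$ built from the $\ell+1$ ``shifted truncations'' of points of $A$, proves that $\ell\mapsto\dim\UU^\ell(A)$ is concave and monotone (via an identification with a graded module over $\bR[\delta_0,\delta_1]$, Section~\ref{sec:proofs}), and studies a family of properties $\cP(j,\ell)$ asserting $\dim\UU^\ell(A_j(i))\ge j+\ell+1$ for subspaces $A_j(i)$ spanned by consecutive minimal points. The crucial implications $\cP(j,\ell)\Rightarrow\cP(j+1,\ell-1)$ and the two complementary height bounds for $H(\UU^\ell(A_j(i)))$ (Propositions~\ref{first:prop} and~\ref{alt:prop}) produce a chain of constraints of the form $Y_m(i)^\theta\ll Y_{m-1}(i)$ with $\theta=\ell\lambda/(1-\lambda)$, and the final inequality $\lambda\le\bigl(\ell+\theta+\theta^2+\cdots+\theta^{k+1}\bigr)^{-1}$ (Proposition~\ref{main:prop}) is what gets optimized. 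The $\sqrt{n}$ gain then comes from choosing $\ell\approx n/2-\tfrac{\log 2}{2}\sqrt{n}$ and $k=n-2\ell\approx\log(2)\sqrt{n}$ so that $\theta^k\approx 1/2$, at which point $\ell$ contributes $n/2-\tfrac{\log 2}{2}\sqrt{n}$ and the geometric series contributes $\approx\tfrac{1}{2}\sqrt{n}$, giving the announced $a=(1-\log 2)/2$. The numerical match with your Riemann integral is a coincidence of form, not of mechanism; the series $\sum\theta^m$ with $\theta^k=1/2$ is a geometric sum, not a Riemann sum for $-\log t$. In short, your outline would need to be replaced wholesale at the point where the ``new tools'' enter, and those tools (the $\UU^\ell$ operator, its concavity, the $\cP(j,\ell)$ cascade, and the Schmidt-inequality-based alternative height estimate) are not hinted at in your proposal.
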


Note that the multiplicative constant $a$ in the denominator is not optimal and
could be improved with additional work.  The same applies to the additive
constant $1/3$ given the actual choice of $a$.  In view of \eqref{intro:eq:dualite},
this gives
\[
 \tau_{n+1}(\xi)\ge n/2+a\sqrt{n}+4/3
\]
for the same $n$ and $\xi$.

As explained by Bugeaud in \cite[Prop.~3.3]{Bu2004}, the arguments of Davenport and
Schmidt leading to \eqref{intro:eq:dualite} can also be adapted to Wirsing's problem
of approximating real numbers $\xi$ by algebraic numbers, yielding
$\omega^*_n(\xi)\ge \hlambda_n(\xi)^{-1}$ for any integer $n\ge 1$, where
$\omega^*_n(\xi)$ denotes the supremum of all $\omega>0$ for which there
exist infinitely many algebraic numbers $\alpha$ of degree at most $n$
with $|\xi-\alpha|\le H(\alpha)^{-\omega-1}$.  Thus
when $[\bQ(\xi):\bQ]>n$, the inequality \eqref{intro:thm:main:eq} implies that
$\omega_n^*(\xi)\ge n/2+a\sqrt{n}+1/3$.  However, this is superseded by the recent
breakthrough of Badziahin and Schleischitz who showed in \cite{BS2021} that
$\omega_n^*(\xi)>n/\sqrt{3}$ when $[\bQ(\xi):\bQ]>n\ge 4$.  Previous to their work,
the best lower bounds for large values of $n$ were of the form $n/2+\GrO(1)$.

For small values of $n$, namely for $n$ odd with $5\le n\le 49$ and for $n$ even
with $4\le n\le 100$, we obtain the following estimates which improve on Theorem \ref{intro:thm:main}.

\begin{theorem}
\label{intro:thm:impair}
Suppose that $n=2m+1\ge 5$ is odd. Then for each $\xi\in\bR$ with $[\bQ(\xi):\bQ]>n$, we have
\[
    \hlambda_{2m+1}(\xi) \leq \alpha_m,
\]
where $\alpha_m$ is the positive root of the polynomial $P_m(x)=1 -(m+1)x - mx^2$.
\end{theorem}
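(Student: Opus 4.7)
The plan is to argue by contradiction, in the spirit of Davenport--Schmidt and Laurent but with an extra layer of iteration tailored to the odd dimension $n=2m+1$. Suppose that $\hlambda := \hlambda_{2m+1}(\xi) > \alpha_m$ and fix some $\lambda \in (\alpha_m,\hlambda)$. By the definition of the uniform exponent, for every sufficiently large $X$ there exists a non-zero $\ux=(x_0,\dots,x_n) \in \bZ^{n+1}$ with $|x_0|\le X$ and $\max_{1\le k\le n}|x_0\xi^k-x_k| \le X^{-\lambda}$. The first step is to extract the canonical sequence $(\ux_i)_{i\ge 1}$ of minimal points of the corresponding convex body family, ordered by increasing norm $X_i = \|\ux_i\|$, with approximation defect $L_i \ll X_{i+1}^{-\lambda}$.

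The heart of the proof is to build, out of a block of roughly $m+2$ consecutive minimal points, a chain of auxiliary integer points by successive multilinear operations (exterior products, or equivalently the $2\times 2$ minors of pairs of columns evaluated at $\xi$). A single such operation, as in \cite{La2003}, already yields Laurent's bound $\hlambda_{2m+1}(\xi)\le 1/(m+1)$. The refinement is to iterate this construction $m$ times: at each level $k=1,\dots,m$ one combines the current auxiliary point with the next minimal point, gaining one extra factor $X_{i+k+1}^{-\lambda}$ in the approximation quality at the cost of a controlled growth in height. Tracking these estimates through all $m$ levels produces an integer point whose archimedean size, compared to its coordinatewise integer height, encodes precisely $m+1$ linear gains and $m$ quadratic gains in $\lambda$.

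Feeding this output into Minkowski's second theorem for a suitably rescaled convex body, and using that the hypothesis $[\bQ(\xi):\bQ]>n$ prevents the final auxiliary point from being swallowed by any proper coordinate subspace, one forces
\begin{equation*}
 1 - (m+1)\lambda - m\lambda^2 \;\le\; 0,
\end{equation*}
that is $P_m(\lambda)\le 0$. Since $\lambda > \alpha_m$ and $P_m$ is positive on $[0,\alpha_m)$, this is the desired contradiction.

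The main obstacle, as is typical for refinements of Davenport--Schmidt, is to prevent the iterated construction from collapsing: at every one of the $m$ stages the newly produced integer point must be non-zero and linearly independent from the previously constructed ones, and the linear span of the whole chain must remain a proper coordinate-theoretic object until the very last step. Each possible collapse corresponds to a non-trivial $\bZ$-linear relation among $1,\xi,\dots,\xi^n$, which is forbidden by our assumption on $[\bQ(\xi):\bQ]$. A secondary difficulty is to extract the genuinely quadratic term $m\lambda^2$ in $P_m(\lambda)$, rather than a merely linear $m\lambda$ that would only recover Laurent's bound; this requires detecting a bilinear (not just linear) gain in approximation quality at each iteration, which should be the role of the new tools advertised in the introduction.
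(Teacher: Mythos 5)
Your proposal correctly identifies the proof-by-contradiction framing and the general need for an auxiliary construction finer than Laurent's single exterior product, but it stops short of the actual mechanism and in several places points in the wrong direction. Concretely, the paper does not iterate a $2\times 2$-minor construction $m$ times, and it does not invoke Minkowski's second theorem at the end. Instead, the engine is the operator $\UU^\ell$ (sending a subspace $A\subseteq\bR^{n+1}$ to the span of the $\ell+1$ shifts of its truncations), together with the concavity and monotonicity of $\ell\mapsto\dim\UU^\ell(A)$ proved via a Hilbert--Samuel-type argument in Sections 3--4, and the associated family of properties $\cP(j,\ell)$ on the minimal-point sequence. For $n=2m+1$, the hypothesis $\hlambda_n(\xi)>\alpha_m$ is exactly calibrated so that Corollary~\ref{P1:cor} gives $\cP(1,m)$ (since $\alpha_m$ is the positive root of $1-(n-m)x-mx^2$), which then forces $\cP(2,m-1)$; this produces, for consecutive $i<j$ in $I$, an $(m+2)$-dimensional subspace $V_i=\UU^{m-1}(\ux_{i-1},\ux_i,\ux_{i+1})$ of $\bR^{m+3}$ with sharp estimates on $H(V_i)$ and $L_\xi(V_i)$. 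A \emph{single} application of the new determinant construction of Proposition~\ref{new:prop} then yields a non-zero point $C_i\in\bZ^m$ whose size and $L_\xi$-value encode the quadratic gain, and the conclusion comes from Schleischitz's transference theorem (Theorem~\ref{new:thm:Schleischitz}/Corollary~\ref{new:cor}), not from the geometry of numbers directly.

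The gap in your argument is precisely the one you flag yourself: you do not have a mechanism that produces the quadratic term $m\lambda^2$ rather than merely reproving Laurent's $\hlambda_n(\xi)\le 1/(m+1)$. ``Iterating $m$ times and gaining one $X^{-\lambda}$ per step'' is linear bookkeeping and cannot, on its own, generate a $\lambda^2$ term; the quadratic contribution in the paper comes from combining the two-sided growth estimates $X_{j+1}^\theta\ll X_{i+1}$ and $X_{i+1}^\theta\ll X_i$ with $\theta=m\alpha/(1-\alpha)$ (Lemma~\ref{odd:lemma1}), which are themselves extracted from Schmidt's height inequality applied to the $\UU^\ell$-images, not from repeated minor-taking. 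Likewise, ruling out degeneracies by appealing to ``a non-trivial $\bZ$-linear relation among $1,\xi,\dots,\xi^n$'' is too coarse: the degeneracies that actually threaten the argument are dimension drops of $\UU^\ell(A_j(i))$, and handling them requires Proposition~\ref{three:prop:height} on the structure of the degenerate case and Lemma~\ref{P0:lemma:A} on recurrence of non-inclusion, not merely the hypothesis $[\bQ(\xi):\bQ]>n$. As written, your proposal is a plausible-sounding roadmap but does not contain a proof.
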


\begin{theorem}
\label{intro:thm:pair}
Suppose that $n =2m\ge 4$ is even. Then for each $\xi\in\bR$ with $[\bQ(\xi):\bQ]>n$, we have
\begin{equation*}
    \hlambda_{2m}(\xi) \leq \beta_m
\end{equation*}
where $\beta_m$ is the positive root of the polynomial
\[
Q_m(x)
 =\begin{cases}
    1-mx-mx^2-m(m-1)x^3 &\text{if $m\ge3$,}\\
    1-3x+x^2-2x^3-2x^4 &\text{if $m=2$.}
   \end{cases}
\]
\end{theorem}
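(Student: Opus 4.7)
The plan is to argue by contradiction. Fix $\lambda > \beta_m$ so that $Q_m(\lambda) < 0$, and assume that $\hlambda_{2m}(\xi) \geq \lambda$, meaning that for every sufficiently large $X$ the system $|x_0| \leq X$, $\max_{1 \leq k \leq 2m} |x_0 \xi^k - x_k| \leq X^{-\lambda}$ admits a nonzero solution $\ux \in \bZ^{2m+1}$. From this I would extract, in the Davenport--Schmidt--Laurent manner, a sequence of minimal points $(\ux_i)_{i \geq 0} \subset \bZ^{2m+1}$ with sizes $X_i = \|\ux_i\|$ strictly increasing and approximation norms $L_i = \max_{1 \leq k \leq 2m}|x_{i,0}\xi^k - x_{i,k}|$ strictly decreasing, satisfying $L_i \leq X_{i+1}^{-\lambda}$ together with the Minkowski-type lower bound $X_i L_i^{2m} \gg 1$. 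The assumption $[\bQ(\xi):\bQ] > 2m$ ensures, in the usual way, that no $\ux_i$ lies on any proper $\bQ$-subspace attached to an algebraic relation of degree $\le 2m$.

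For each large $i$ I would then consider a window of $m+1$ consecutive minimal points $\ux_i, \ux_{i+1}, \ldots, \ux_{i+m}$ inside $\bR^{2m+1}$. Since the ambient dimension exceeds $m+1$, these are typically linearly independent. The heart of the argument is a two-sided estimate for the norm of the wedge $\ux_i \wedge \ux_{i+1} \wedge \cdots \wedge \ux_{i+m}$: from above, using the fact that each $\ux_{i+j}$ is close to the direction of $(1,\xi,\dots,\xi^{2m})$, the wedge factors through the osculating flag of the curve $t \mapsto (1,t,\dots,t^{2m})$, yielding an upper bound which is a sum of monomials in the $X_{i+j}$ and $L_{i+j}$; from below, integrality and the transcendence hypothesis force the wedge to have norm $\gg 1$. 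Combining these gives inequalities among the exponents $\mu_j = \log X_{i+j}/\log X_i$ and $\sigma_j = -\log L_{i+j}/\log X_i$.

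Passing to a subsequence along which the ratios $X_{i+j+1}/X_{i+j}$ stabilize, in the spirit of Laurent's ``regular'' minimal-point construction, the full family of inequalities collapses to a single polynomial constraint equivalent to $Q_m(\lambda) \geq 0$. I expect the combinatorics to break up as follows: the coefficient $-m$ on $x$ encodes the $m$ linear forms $L_i,\dots,L_{i+m-1}$ entering with weight $1$; the coefficient $-m$ on $x^2$ arises from the $m$ consecutive pairings in the window; and the coefficient $-m(m-1)$ on $x^3$ records the $m(m-1)$ ordered pairs of distinct indices in the window that produce a third-order correction through a suitable wedge factoring through the second osculating space. Under our assumption $Q_m(\lambda) < 0$ this is incompatible, giving the desired contradiction.

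The case $m = 2$ needs a separate treatment because in ambient dimension $5$ the $2$- and $3$-dimensional subspaces spanned by pairs and triples of minimal points are forced into non-generic incidence, which is the source of the term $+x^2$ in $Q_2$. The main obstacle throughout will be making the exponent estimates sharp enough to recover the exact coefficients $m$, $m$, $m(m-1)$ of $Q_m$ rather than just the leading behavior $1 - mx$ that reproduces Laurent's bound $\hlambda_{2m}(\xi) \leq 1/m$; this should follow from the refined control of heights of wedge products developed in the body of the paper, applied carefully to windows of minimal points of length $m+1$.
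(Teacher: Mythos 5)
Your outline gestures in roughly the right direction — minimal points, wedge norms, integrality lower bounds, a limit along a subsequence to get a polynomial constraint — but it is not a proof, and at several places it asserts things that are not correct or cannot be substantiated by the method you propose.

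First, a concrete error: the ``Minkowski-type lower bound $X_i L_i^{2m}\gg 1$'' is false under the contradiction hypothesis. With $n=2m$, one has only $X_{i+1}L_i^n\ll 1$ from Minkowski's theorem (an \emph{upper} bound), and if $\hlambda_n(\xi)\ge\lambda>\beta_m>1/n$ then $L_i\ll X_{i+1}^{-\lambda}\le X_i^{-\lambda}$, so $X_iL_i^n\ll X_i^{1-n\lambda}\to 0$. Lower bounds of the type $X_iL_i^\ell\gg 1$ for \emph{smaller} $\ell$ are exactly what the paper needs and proves (e.g.\ Lemma~\ref{P1:lemma3} gives $X_iL_i^\ell\gg 1$ under $\cP(1,\ell-1)$ and $\hlambda_n>1/(2\ell)$); they are not free.

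Second, the central estimate you propose — bound $\|\ux_i\wedge\cdots\wedge\ux_{i+m}\|$ from above using Lemma~\ref{heights:lemma} and from below by $\gg 1$ — yields, after the standard simplifications, an inequality on the order of Laurent's $\hlambda_n(\xi)\le 1/\lceil n/2\rceil = 1/m$. It cannot by itself reach $\beta_m$, which lies strictly between $1/(m+2)$ and $1/(m+1)$. The gain in the paper comes from analysing the \emph{truncated} subspaces $\UU^\ell(A_j(i))$ and the properties $\cP(j,\ell)$ (Definition~\ref{minimal:def:Pjl}), together with the concavity and degenerate-case structure of $\ell\mapsto\dim\UU^\ell(A)$ (Propositions~\ref{three:prop:concave}--\ref{three:prop:avoiding}), none of which appear in your sketch. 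Concretely, the proof of Theorem~\ref{intro:thm:pair} in Section~\ref{sec:even} runs a chain of implications through auxiliary thresholds $\delta<\gamma<\beta_m$: it shows $\hlambda_n>\delta\Rightarrow\cP(1,m-1)$ and $\cP(2,m-2)$ hold; then $\hlambda_n>\gamma\Rightarrow\cP(2,m-1)$ fails; and then exploits the coexistence of ``$\cP(2,m-2)$ holds, $\cP(2,m-1)$ fails'' via Proposition~\ref{P0:prop:I}, the two height estimates of Sections~\ref{sec:first} and~\ref{sec:alt}, and Lemma~\ref{P1:lemma2} to force $Q_m(\lambda)\ge 0$. Your ``wedge factoring through the osculating flag'' is a loose analogue of the projections $\ux\mapsto\ux^{(k,\ell)}$, but the combinatorial accounting you give for the coefficients $-m,-m,-m(m-1)$ does not correspond to anything one can extract from a single window of $m+1$ minimal points; in the paper those coefficients arise from combining the estimate $1\ll H(\UU^{m-1}(A_2(i)))L_{i-1}$ with $1\ll H(\UU^{m-2}(A_2(i)))L_{i-1}^2$, the first general height estimate, and the quadratic growth $X_{i+1}^{m\lambda}\ll X_i$.

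Third, your remark that the $+x^2$ term in $Q_2$ reflects ``forced non-generic incidence'' in ambient dimension $5$ is speculation that does not match the structure of the proof: for $m=2$ the polynomial changes \emph{form} entirely ($Q_2$ is quartic, whereas the generic formula $1-mx-mx^2-m(m-1)x^3$ is cubic), because the argument must use the new construction $C(V,\ux)$ of Proposition~\ref{new:prop} and the reduction via Corollary~\ref{new:cor} (Schleischitz's transference) to produce points in $\bZ^2$ with good approximation properties. None of that is visible from window-wedge considerations. In short, the proposal identifies the right kind of objects (minimal points and exterior algebra heights) but omits the truncation operators $\UU^\ell$ and the properties $\cP(j,\ell)$ that are the paper's main innovation, relies on a false baseline inequality, and leaves the passage from dimension/height estimates to the exact polynomial $Q_m$ entirely unjustified.
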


Again, these upper bounds could be slightly improved with more work, at least for each $n\geq 6$
(even or odd).  Note that they are relatively close to $1/(m+2)$ as one finds
\[
 \frac{1}{m+2}<\alpha_m<\frac{1}{m+2}+\frac{2}{(m+2)^3}
 \et
 \frac{1}{m+2}<\beta_m<\frac{1}{m+2}+\frac{7}{(m+2)^3}
\]
for each $m\ge 2$.  The table below shows how they compare to those of Laurent (L.) for odd $n\le 13$
and to those of Schleischitz (S.) and Badziahin (B.) for even $n\le 12$.

\begin{figure}[H]
 \label{intro:table}
    {\renewcommand{\arraystretch}{1.2}
    \centering
        $\begin{array}{|c|| c | c | c |c|}
        \hline
        \mathbf{n}  & \mathbf{L.} & \mathbf{S.} & \mathbf{B.} &  \mathbf{new} \\ \hline
        4    &                 & 0.3706\cdots & 0.3660\cdots & 0.3370\cdots \\ \hline
        5    & 0.3333\cdots  & &                               & 0.2807\cdots \\ \hline
        6    &                 & 0.2681\cdots & 0.2637\cdots & 0.2444\cdots \\ \hline
        7    & 0.2500\cdots  & &                               & 0.2152\cdots \\ \hline
        8    &                 & 0.2107\cdots & 0.2071\cdots & 0.1919\cdots \\ \hline
        9    & 0.2000\cdots  & &                               & 0.1753\cdots \\ \hline
        10   &                & 0.1737\cdots & 0.1708\cdots  & 0.1587\cdots \\ \hline
        11   & 0.1666\cdots & &                                & 0.1483\cdots \\ \hline
        12   &                & 0.1478\cdots & 0.1454\cdots  & 0.1357\cdots \\ \hline
        13   & 0.1428\cdots & &                                & 0.1286\cdots \\ \hline
        \end{array}$}
   \caption{Upper bounds for $\hlambda_n$}
   \label{intro:table1}
\end{figure}

For the proof we develop new tools.  The main one concerns the behavior of the
function $f(\ell)=\dim\UU^\ell(A)$ for $\ell\in\{0,1,\dots,n+1\}$ where $A$ is any
subspace of $\bR^{n+1}$ and $\UU^\ell(A)$ stands for the subspace of
$\bR^{n-\ell+1}$ spanned by the images of $A$ through the projections
$(x_0,\dots,x_n)\mapsto (x_k,\dots,x_{k+n-\ell})$ for $k=0,\dots,\ell$,
with the convention that $\UU^{n+1}(A)=0$.
In Sections \ref{sec:three} and \ref{sec:proofs}, we show that such a function is
concave and monotone increasing as long as $\UU^\ell(A)\neq\bR^{n-\ell+1}$.
We also study the degenerate cases where $f(\ell)<\dim(A)+\ell \le n-\ell+1$.

In Section \ref{sec:minimal}, we form a sequence of minimal points
$(\ux_i)_{i\ge 0}$ for $\xi$ in $\bZ^{n+1}$, and recall how the exponents
$\hlambda_n(\xi)$ and $\lambda_n(\xi)$ can be computed from this data.
Given integers $0\le j,\ell\le n$,
we say that Property $\cP(j,\ell)$ holds if, for any subspace $A=\Span{\ux_i,\dots,\ux_q}$
of dimension at most $j+1$ spanned by consecutive minimal points with a large enough
initial index $i$, we have $\dim\UU^\ell(A)\ge\dim(A)+\ell$.  This is a crucial notion
with the remarkable feature that $\cP(j,\ell)$ implies $\cP(j+1,\ell-1)$ when $\ell\ge 1$.

In Sections \ref{sec:P0} and \ref{sec:P1}, we establish some consequences of Properties
$\cP(0,\ell)$ and $\cP(1,\ell)$ respectively and we provide lower bounds on
$\hlambda_n(\xi)$ which ensure that these properties hold.  In Section \ref{sec:P0}, we
also study the general situation where $\cP(j,\ell-1)$ holds but not $\cP(j,\ell)$ for
some integer $\ell\ge 1$.  In Sections \ref{sec:first} and
\ref{sec:alt}, we provide two types of upper bounds for the height of $\UU^\ell(A)$
when $\cP(j,\ell)$ holds and $A=\Span{\ux_i,\dots,\ux_q}$ has dimension $j+1$.
In particular, the estimate of Section \ref{sec:alt} yields a strong constraint on the growth
of the norms $X_i=\norm{\ux_i}$.  These tools are combined in Section \ref{sec:main} to
prove Theorem \ref{intro:thm:main}.
Finally Theorems \ref{intro:thm:impair} and \ref{intro:thm:pair} are proved respectively
in Sections~\ref{sec:odd} and~\ref{sec:even}, with the help of a new construction
presented in Section~\ref{sec:new}.

We start in the next section by fixing some notation, including our notion of height
for the subspaces  of $\bR^m$ defined over $\bQ$.

%
%

\section{Heights}
\label{sec:heights}

For each integer $m\ge 1$, we view $\bR^m$ as an Euclidean space
for the usual scalar product of points $\ux,\,\uy\in\bR^m$ written
$\ux\cdot\uy$, and we denote by $\norm{\ux}=\sqrt{\ux\cdot\ux}$
the Euclidean norm of a point $\ux\in\bR^m$.  For
each integer $k=1,\dots,m$, we also identify $\bigwedge^k\bR^m$
with $\bR^{\binom{m}{k}}$ via a choice of ordering of the Pl\"ucker
coordinates and we denote by $\norm{\ualpha}$ the resulting 
Euclidean norm of a point $\ualpha\in\bigwedge^k\bR^m$.

For any subset $A$ of $\bR^m$, we denote by $\langle A\rangle$
the linear subspace of $\bR^m$ spanned by $A$ over $\bR$.  When
$A$ is a finite set $\{\ux_1,\dots,\ux_k\}$, we simply write
$\langle \ux_1,\dots,\ux_k \rangle$.

Let $V$ be an arbitrary vector subspace of $\bR^m$ defined over $\bQ$.
If $V\neq 0$, we define its height $H(V)$ as the covolume of the lattice
$V\cap\bZ^m$ inside $V$.  Explicitly, if $\dim(V)=k$ and if
$\{\ux_1,\dots,\ux_k\}$ is a basis of $V\cap\bZ^m$ over $\bZ$, then
\begin{equation}
\label{heights:eq:H(V)}
 H(V)=\norm{\ux_1\wedge\cdots\wedge\ux_k}.
\end{equation}
For $V=0$, we set $H(0)=1$.  Then, we have the duality relation
\begin{equation}
\label{heights:eq:duality}
 H(V)=H(V^\perp)
\end{equation}
where $V^\perp$ denotes the orthogonal complement of $V$ in $\bR^m$
\cite[Chapter I, \S8]{Schm1991}.  In particular, this gives
$H(\bR^m)=H(0^\perp)=1$.  If $\ux\in\bZ^m$ is \emph{primitive}, namely
if the gcd of its coordinates is $1$, we have
$H(\langle\ux\rangle)=H(\langle\ux\rangle^\perp)=\norm{\ux}$.
We will also need the following important inequality of Schmidt
\begin{equation}
\label{heights:eq:schmidt}
 H(U\cap V)H(U+V)\le H(U)H(V),
\end{equation}
valid for any subspaces $U$ and $V$ of $\bR^m$
defined over $\bQ$ \cite[Chapter I, Lemma 8A]{Schm1991}.

Finally, given $\xi\in\bR$ and $\ux=(x_0,\dots,x_m)\in
\bZ^{m+1}\setminus\{0\}$, we define
\[
 L_\xi(\ux)=\max_{1\le j\le m}|x_0\xi^j-x_j|,
\]
and note that
\[
 L_\xi(\ux) \asymp \norm{\Xi_m\wedge\ux}
 \quad
 \text{where}
 \quad
 \Xi_m=(1,\xi,\dots,\xi^{m})
\]
with implicit constants depending only on $\xi$ and $m$.
The latter relation is instructive since the product
$\norm{\Xi_m\wedge\ux}\,\norm{\Xi_m}^{-1}\norm{\ux}^{-1}$ represents
the sine of the angle between $\Xi_m$ and $\ux$.  We will
repeatedly use the following generalization of \cite[Lemma 9]{DS1969}.

\begin{lemma}
\label{heights:lemma}
Suppose that $\ux_1,\dots,\ux_k\in\bZ^{m+1}$ are linearly
independent.  Then
\[
 H(\langle\ux_1,\dots,\ux_k\rangle)
 \le \norm{\ux_1\wedge\cdots\wedge\ux_k}
 \ll \sum_{i=1}^k\norm{\ux_i}\prod_{j\neq i}L_\xi(\ux_j)
\]
with an implied constant which depends only on $\xi$ and\/ $m$.
\end{lemma}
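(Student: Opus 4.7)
The plan is to establish the two inequalities separately, both reducing to routine bookkeeping once the right decomposition is in place.

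For the lower bound $H(\langle\ux_1,\dots,\ux_k\rangle)\le\norm{\ux_1\wedge\cdots\wedge\ux_k}$, I would apply the definition \eqref{heights:eq:H(V)} of the height directly. Setting $V=\langle\ux_1,\dots,\ux_k\rangle$, the quantity $H(V)$ is the covolume of the lattice $V\cap\bZ^{m+1}$ inside $V$. The vectors $\ux_1,\dots,\ux_k$ form a $\bZ$-linearly independent family contained in this lattice, hence generate a full-rank sublattice of covolume $\norm{\ux_1\wedge\cdots\wedge\ux_k}$. This sublattice has some positive integer index in $V\cap\bZ^{m+1}$, so $\norm{\ux_1\wedge\cdots\wedge\ux_k}$ is a positive integer multiple of $H(V)$, and in particular at least $H(V)$.

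For the upper bound, the key observation is that $L_\xi(\ux_i)$ measures how far $\ux_i$ lies from the line $\bR\Xi_m$, as reflected in the relation $\norm{\Xi_m\wedge\ux}\asymp L_\xi(\ux)$ recalled just before the lemma. Accordingly, I would write the orthogonal decomposition
\[
\ux_i=a_i\Xi_m+\uv_i,\qquad a_i=\frac{\ux_i\cdot\Xi_m}{\norm{\Xi_m}^2},\qquad \uv_i\perp\Xi_m,
\]
which yields $\norm{\uv_i}=\norm{\Xi_m\wedge\ux_i}/\norm{\Xi_m}\asymp L_\xi(\ux_i)$ and, by Cauchy--Schwarz, $|a_i|\le\norm{\ux_i}/\norm{\Xi_m}\ll\norm{\ux_i}$. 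Substituting into $\ux_1\wedge\cdots\wedge\ux_k$ and expanding multilinearly, every monomial containing two or more factors of $\Xi_m$ vanishes; only $k+1$ terms survive, namely the pure wedge $\uv_1\wedge\cdots\wedge\uv_k$ and the $k$ terms in which exactly one $\uv_i$ is replaced by $a_i\Xi_m$.

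Taking norms and applying Hadamard's inequality to each surviving term then produces
\[
\norm{\ux_1\wedge\cdots\wedge\ux_k}\ll\prod_{i=1}^k L_\xi(\ux_i)+\sum_{i=1}^k\norm{\ux_i}\prod_{j\neq i}L_\xi(\ux_j),
\]
with constants depending only on $\xi$ and $m$. The pure product is then absorbed into, say, the $i=1$ summand of the sum via the trivial bound $L_\xi(\ux_1)\ll\norm{\ux_1}$, yielding the asserted estimate. The only technical point is to choose the decomposition so that the wedge algebra kills the higher-order terms in $\Xi_m$; everything else reduces to Hadamard's inequality and the two basic norm estimates on $\uv_i$ and $a_i$. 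For $k=2$ one recovers the Davenport--Schmidt lemma quoted in the statement.
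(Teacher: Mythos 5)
Your proof is correct and follows essentially the same strategy the paper sketches: write each $\ux_i$ as a scalar multiple of $\Xi_m$ plus a small remainder of norm $\asymp L_\xi(\ux_i)$, expand the wedge product, and note that any term containing $\Xi_m\wedge\Xi_m$ vanishes. The only cosmetic difference is the choice of decomposition: the paper takes $\ux_i = x_{i,0}\Xi_m + \Delta_i$ with $x_{i,0}$ the first coordinate of $\ux_i$ (so $\Delta_i$ has vanishing first coordinate and $\norm{\Delta_i}\asymp L_\xi(\ux_i)$ directly from the definition of $L_\xi$), while you use the orthogonal projection onto $\bR\Xi_m$; both give the same surviving $k+1$ terms, the same bound $|a_i|\ll\norm{\ux_i}$, and both require the observation $L_\xi(\ux_i)\ll\norm{\ux_i}$ to absorb the pure term $\prod_i L_\xi(\ux_i)$.
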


This follows from \eqref{heights:eq:H(V)} by writing $\ux_j=x_{j,0}\Xi_m+\Delta_j$
for $j=1,\dots,k$, where $x_{j,0}$ stands for the first coordinate of
$\ux_j$, and then by expanding the exterior product upon noting that
$\norm{\Delta_j}\asymp L_\xi(\ux_j)$.

%
%

\section{Three crucial propositions}
\label{sec:three}

Let $\ell,n$ be integers with $0\le \ell\le n$.  For each
$\ux=(x_0,\dots,x_n)\in\bR^{n+1}$, we denote by
\[
 \ux^{(k,\ell)} = (x_{k},\dots,x_{k+n-\ell})\in \bR^{n+1-\ell}
 \quad (0\le k\le \ell),
\]
the points consisting of $n+1-\ell$ consecutive coordinates of $\ux$,
and we denote by
\[
  \UU^{\ell}(\ux) = \Span{ \ux^{(0,\ell)},\dots, \ux^{(\ell,\ell)}}
  \subseteq \bR^{n+1-\ell},
\]
the vector subspace of $\bR^{n+1-\ell}$ which they generate.
In general, for each non-empty subset $A$ of $\bR^{n+1}$, we define
\[
 \UU^\ell(A)=\sum_{\ux\in A}\UU^\ell(\ux)
 \et
 \UU^{n+1}(A)=0.
\]
Then, we have $\UU^k(\UU^{\ell-k}(A)) = \UU^\ell(A)
= \UU^\ell(\langle A\rangle)$ for any integers
$0\le k\le \ell\le n+1$.

Our interest in the truncated points $\ux^{(k,\ell)}$ comes from
the fact that, when $\ux\in\bZ^{n+1}$ and $\ell<n$, they belong to
$\bZ^{n+1-\ell}$ and, for given $\xi\in \bR$, they satisfy
\begin{equation}
 \label{three:eq:xkl}
 \norm{\ux^{(k,\ell)}} \le \norm{\ux}
 \et
 L_\xi(\ux^{(k,\ell)}) \ll L_\xi(\ux)
\end{equation}
with implied constants depending only on $\xi$ and $n$.
So Lemma \ref{heights:lemma} yields
\begin{equation}
 \label{three:eq:HUx}
 H(\UU^\ell(\ux))\ll \norm{\ux}L_\xi(\ux)^{d-1}
 \quad
 \text{if}
 \quad
 d=\dim\UU^\ell(\ux)>0.
\end{equation}
In general, when $A$ is a subspace of $\bR^{n+1}$ defined
over $\bQ$, the subspace $\UU^\ell(A)$ of $\bR^{n+1-\ell}$
is also defined over $\bQ$ and, as the above example shows,
we need some information on its dimension in order to estimate
its height.

In this section, we state three propositions concerning
$\UU^\ell(A)$ as a function of $\ell$, for
a fixed subspace $A$ of $\bR^{n+1}$, but postpone their proofs
to the next section.   In order to state the first one,
we recall that a function $f:\{0,\dots,n+1\}\rightarrow \bR$ is
\emph{convex} if it satisfies the following equivalent conditions
\begin{itemize}
 \item[(1)] $f(i)-f(i-1) \le f(i+1)-f(i)$ \ for $i=1,\dots,n$;
 \medskip
 \item[(2)] $\disp \frac{f(j)-f(i)}{j-i} \le \frac{f(k)-f(j)}{k-j}$ \
 whenever $0\leq i<j<k\le n+1$.
\end{itemize}
We say that $f$ is \emph{concave} if $-f$ is convex. We also fix a
positive integer $n$.

\begin{prop}
\label{three:prop:concave}
Let $A$ be a subspace of $\bR^{n+1}$. Then $f(\ell)=\dim\UU^\ell(A)$
is a concave function of $\ell\in\{0,\dots,n+1\}$.  Moreover,
there is an integer $m\in\{0,\dots,n+1\}$ for which the function $f$ is
monotonically increasing on $\{0,\dots,m\}$, while strictly
decreasing with $f(\ell)=n-\ell+1$ for $\ell\in\{m,\dots,n+1\}$.
\end{prop}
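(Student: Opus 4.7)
The approach rests on the single recursive identity
\[
 \UU^{\ell+1}(A)=\pi\bigl(\UU^\ell(A)\bigr)+\sigma\bigl(\UU^\ell(A)\bigr),
\]
in which $\pi,\sigma\colon\bR^{n-\ell+1}\to\bR^{n-\ell}$ are the projections dropping, respectively, the last and the first coordinates. This is immediate from $\pi(\ux^{(k,\ell)})=\ux^{(k,\ell+1)}$ for $0\le k\le\ell$ together with $\sigma(\ux^{(\ell,\ell)})=\ux^{(\ell+1,\ell+1)}$. Writing $V_\ell=\UU^\ell(A)$ and $f(\ell)=\dim V_\ell$, I plan to deduce both assertions from this recursion alone.

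To prove concavity, I would show that $f(\ell-1)+f(\ell+1)\le 2f(\ell)$ for each $1\le\ell\le n$ via a short exact sequence. The recursion yields a natural surjection
\[
 \Phi\colon V_\ell\oplus V_\ell\twoheadrightarrow V_{\ell+1},\qquad \Phi(\uu,\uv)=\pi \uu+\sigma \uv,
\]
so rank-nullity gives $\dim\ker\Phi=2f(\ell)-f(\ell+1)$. The decisive step is to exhibit a linear injection $\psi\colon V_{\ell-1}\hookrightarrow\ker\Phi$, which I define (abusing notation for $\pi,\sigma$ now acting on $\bR^{n-\ell+2}$) by $\psi(\ux)=(\sigma \ux,-\pi \ux)$. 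Then $\psi(\ux)$ lies in $V_\ell\oplus V_\ell$ since $\pi V_{\ell-1}$ and $\sigma V_{\ell-1}$ are both contained in $V_\ell$; it lies in $\ker\Phi$ because $\pi\sigma=\sigma\pi$ (both compositions drop simultaneously the first and the last coordinates of $\ux$); and $\psi$ is injective because $\ker\pi\cap\ker\sigma=0$, the two kernels being distinct coordinate axes. Combining these facts gives $f(\ell-1)\le\dim\ker\Phi=2f(\ell)-f(\ell+1)$.

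For the second assertion I first observe that $V_\ell=\bR^{n-\ell+1}$ forces $V_{\ell+1}\supseteq\pi V_\ell=\bR^{n-\ell}$, hence $V_{\ell+1}=\bR^{n-\ell}$. Since $f(n+1)=0=n-(n+1)+1$, the set of $\ell$ with $f(\ell)=n-\ell+1$ is non-empty; let $m$ be its minimum. A one-step induction then gives $V_\ell=\bR^{n-\ell+1}$ for every $\ell\in\{m,\dots,n+1\}$, so $f$ has the announced strictly decreasing shape there. To see that $f$ is monotonically increasing on $\{0,\dots,m\}$, note that the ambient bound $f(\ell)\le n-\ell+1$ combined with the minimality of $m$ yields $f(m-1)\le n-m+1=f(m)$, hence $f(m)-f(m-1)\ge 0$. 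Concavity says precisely that the successive differences $f(\ell+1)-f(\ell)$ are non-increasing in $\ell$, so the nonnegativity of $f(m)-f(m-1)$ automatically propagates backward to every earlier difference, which is the desired monotonicity.

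The single non-routine step is the construction of $\psi$; once it is in hand the verifications are immediate and the rest of the proposition reduces to formal bookkeeping. The degenerate situations $m=0$ (namely $A=\bR^{n+1}$) and $m=n+1$ (namely $A=0$) are covered by the same argument without change.
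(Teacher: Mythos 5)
Your proposal is correct, and it takes a genuinely different, more elementary route than the paper. The paper transports the problem to $E=\bR[T_0,T_1]$ via $\psi_n$, identifies $\UU^\ell(A)$ with $D_\ell\psi_n(A)$ for the differential-operator ring $D=\bR[\delta_0,\delta_1]$, and derives concavity from the convexity of the Hilbert function of the graded syzygy module $M=\ker\varphi$ (Lemma~\ref{proofs:lemma:h}), relying on unique factorization in $D$; the monotonicity half is then obtained from a separate factorization argument (Lemma~\ref{proofs:lemma:injective}) that produces a single $\delta\in D_1$ injective on a given proper subspace. Your proof replaces all of this machinery with the recursion $\UU^{\ell+1}(A)=\pi\,\UU^\ell(A)+\sigma\,\UU^\ell(A)$ and a short linear-algebra computation: the surjection $\Phi\colon V_\ell\oplus V_\ell\to V_{\ell+1}$ together with the injection $\psi(\ux)=(\sigma\ux,-\pi\ux)$ of $V_{\ell-1}$ into $\ker\Phi$ --- well-defined and injective because $\pi\sigma=\sigma\pi$ and $\ker\pi\cap\ker\sigma=0$ --- give $f(\ell-1)+f(\ell+1)\le 2f(\ell)$ by rank--nullity, and these two commutation and intersection facts are the linear-algebra shadow of the Koszul relation $\delta_0\delta_1=\delta_1\delta_0$ underlying the paper's argument. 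The second half of your proof (getting $f(m-1)\le f(m)$ from minimality of $m$ and the integrality of $f$, then propagating backward by concavity) is sound and matches the paper's in spirit. You gain brevity and self-containment for this proposition; the paper's heavier algebraic setup earns its keep by being reused in the proofs of Propositions~\ref{three:prop:height} and~\ref{three:prop:avoiding}.
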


Figure \ref{three:fig} illustrates this result.  Taking it for granted,
we deduce a useful corollary.

\begin{figure}[H]
 \begin{tikzpicture}[xscale=0.5,yscale=0.4]
       \draw[-stealth, semithick] (-0.15,0)--(22,0) node[below]{$\ell$};
       \draw[-stealth, semithick] (0,-0.15)--(0,12) node[right]{$f(\ell)=\dim\UU^\ell(A)$};
       \node[below] at (0,-0.15) {$0$};
       \node[draw,circle,inner sep=1.25pt,fill] at (0,2) {};
       \node[draw,circle,inner sep=1.25pt,fill] at (1,5) {};
       \node[draw,circle,inner sep=1.25pt,fill] at (5,10) {};
       \node[draw,circle,inner sep=1.25pt,fill] at (9,10) {};
       \node[draw,circle,inner sep=1.25pt,fill] at (19,0) {};
       \node[left] at (0,2) {$\dim(A)$};
       \node[below] at (19,0) {$n+1$};
       \draw[thick] (0,2)--(1,5)--(1.5,6);
       \draw[thick,dotted] (1.5,6)--(2,7);
       \draw[thick,dotted] (3,8)--(4,9);
       \draw[thick] (4,9)--(5,10)--(9,10)--(19,0);
       \draw[dashed] (9,10) -- (9,0);
       \draw[thick] (9,0.15)--(9,-0.15) node[below]{$m$};
       \draw[dashed] (5,10) -- (0,10);
       \draw[thick] (0.15,10)--(-0.15,10) node[left]{$n-m+1$};
       \draw[->] (16,6) to [out=270,in=0, looseness=1] (15,5);
       \node[above] at (16,6) {$f(\ell)=n-\ell+1$};
 \end{tikzpicture}
\caption{Graph of the piecewise linear function interpolating the
values $f(\ell)=\dim\UU^\ell(A)$ at integers $\ell\in\{0,\dots,n+1\}$.}
\label{three:fig}
\end{figure}

\begin{corollary}
\label{three:cor}
Let $A$ be a subspace of $\bR^{n+1}$, and let $\ell\in\{1,\dots,n\}$. Then
\begin{itemize}
\item[(i)]
    $\min\{\dim\UU^{\ell}(A),\,\dim(A)+\ell-1\} \le \dim\UU^{\ell-1}(A)$;
\item[(ii)]
    $\min\{\dim\UU^{\ell-1}(A),\,n-\ell+1\} \le \dim\UU^{\ell}(A)$.
\end{itemize}
\end{corollary}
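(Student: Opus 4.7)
My plan is to derive both parts directly from Proposition~\ref{three:prop:concave}, without touching the underlying spaces $\UU^\ell(A)$ again. Set $f(\ell)=\dim\UU^\ell(A)$ and, for $\ell\in\{1,\dots,n+1\}$, write $s_\ell=f(\ell)-f(\ell-1)$ for the successive differences. Each $s_\ell$ is an \emph{integer}, and by concavity of $f$ the sequence $s_1\ge s_2\ge\cdots\ge s_{n+1}$ is non-increasing; these two facts are the only inputs I will use.

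For part (i), I would split on the sign of $s_\ell$. If $s_\ell\le 0$, then $f(\ell-1)\ge f(\ell)$ and the bound follows. If instead $s_\ell\ge 1$, then the non-increasing property forces $s_j\ge 1$ for every $j\le\ell$, which telescopes to
\[
 f(\ell-1)=f(0)+\sum_{j=1}^{\ell-1}s_j\ge \dim(A)+\ell-1,
\]
again giving the desired inequality. The point worth isolating here is that integrality of $s_\ell$ is what converts the qualitative statement "$s_\ell>0$" into the quantitative "$s_\ell\ge 1$"; without it, one could only recover a weaker slope-type estimate.

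For part (ii), I would split according to where $\ell$ sits relative to the threshold $m$ produced by Proposition~\ref{three:prop:concave}. If $\ell\le m$, monotonicity of $f$ on $\{0,\dots,m\}$ yields $f(\ell)\ge f(\ell-1)\ge \min\{f(\ell-1),n-\ell+1\}$. If instead $\ell>m$, then $\ell$ lies in the decreasing range $\{m,\dots,n+1\}$ on which $f(\ell)=n-\ell+1$, so the minimum is automatically $\le f(\ell)$.

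I do not foresee any real obstacle once Proposition~\ref{three:prop:concave} is granted: the whole corollary is an exercise in reading off its content through the slope sequence $(s_\ell)$. The only subtlety is the integrality step in part (i); everything else is essentially bookkeeping on the piecewise-linear profile pictured in Figure~\ref{three:fig}.
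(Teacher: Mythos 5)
Your proof is correct and follows essentially the same route as the paper: both arguments read off Corollary~\ref{three:cor} from the concavity and monotonicity structure in Proposition~\ref{three:prop:concave}, with integrality of the slopes being the step that converts $f(\ell)-f(\ell-1)>0$ into $f(\ell)-f(\ell-1)\ge 1$ in part~(i). The only difference is organizational: you treat (i) and (ii) independently (slope sign for (i), position of $\ell$ relative to $m$ for (ii)), whereas the paper first disposes of the degenerate case $f(\ell-1)=n-\ell+2$ before running the same concavity argument — a cosmetic reshuffling of identical content.
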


\begin{proof}
Let $f$ and $m$ be as in Proposition \ref{three:prop:concave}.
If $f(\ell-1)=n-\ell+2$, then $\ell-1\ge m$, so $f(\ell)=n-\ell+1$
and we are done.  Otherwise the function $f$ is concave and
monotonically increasing on $\{0,\dots,\ell\}$, hence $f(\ell-1)
\le f(\ell)$, so (ii) holds. If $f(\ell-1)=f(\ell)$ or if
$\ell=1$, then (i) also holds since $f(0)=\dim\UU^0(A)=\dim(A)$.
So we may further assume that $f(\ell-1) <  f(\ell)$ and that
$\ell > 1$. By concavity of $f$, we deduce that
\begin{align*}
    \frac{f(\ell-1)-f(0)}{\ell-1} \geq f(\ell)-f(\ell-1) \geq 1,
\end{align*}
hence $f(\ell-1)\geq f(0)+\ell-1 = \dim(A)+\ell-1$,
and (i) holds again.
\end{proof}

The second proposition provides additional information in the
degenerate situation where $\dim\UU^\ell(A)<\dim(A)+\ell$.

\begin{prop}
 \label{three:prop:height}
Let $j,\ell\ge 0$ be integers with $j+2\ell\le n$, and let
$A$ be a subspace of $\bR^{n+1}$ of dimension $j+1$ defined
over $\bQ$. Suppose that
\[
 d:=\dim\UU^\ell(A) \le j+\ell,
\]
and set $V=\UU^{n-d}(A)$.  Then, we have $0\le d-j-1< \ell\le n-d$ and
\[
 \dim\UU^t(A)=d
 \et
 H(\UU^t(A))\asymp H(V)^{n-d-t+1}
\]
for each $t=d-j-1,\dots,n-d$, with implied constants depending
only on $n$.  Moreover, for such
$t$ and for $\ux\in\bR^{n+1}$, the condition
$\UU^t(\ux)\subseteq\UU^t(A)$ is equivalent to
$\UU^{n-d}(\ux)\subseteq V$, thus independent of $t$.
\end{prop}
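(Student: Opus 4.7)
\textbf{Stage 1 (ranges and constant dimension).} Let $f(s)=\dim\UU^s(A)$. From the hypotheses $d\le j+\ell$ and $j+2\ell\le n$, one has $d-j-1<\ell$ and $d\le j+\ell\le n-\ell$, giving $\ell\le n-d$. Proposition \ref{three:prop:concave} gives $f(s)=n-s+1$ on $\{m,\dots,n+1\}$, so $d=f(\ell)\le n-\ell$ forces $\ell<m$; hence $f$ is monotone increasing at $\ell$ and $d\ge f(0)=j+1$, so $d-j-1\ge 0$. To extend $f(\ell)=d$ over $[d-j-1,n-d]$, I use Corollary \ref{three:cor}: part (i) combined with monotonicity of $f$ below $m$ shows that $f(t)=d\le j+t$ forces $f(t-1)=d$, and iterating from $t=\ell$ down to $t=d-j$ covers $[d-j-1,\ell]$; part (ii) combined with concavity ($f(t+1)-f(t)\le f(t)-f(t-1)=0$) shows that $f(t)=d\le n-t$ forces $f(t+1)=d$, and iterating from $t=\ell$ up to $t=n-d-1$ covers $[\ell,n-d]$.

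\textbf{Stage 2 (primitive recurrence vector and its shifts).} Since $V=\UU^{n-d}(A)\subseteq\bR^{d+1}$ has codimension one and is rational, I pick a primitive generator $\ua=(a_0,\dots,a_d)\in\bZ^{d+1}$ of $V^\perp$; by \eqref{heights:eq:duality}, $\|\ua\|=H(V)$. The inclusion $\UU^{n-d}(\ux)\subseteq V$ holding for every $\ux\in A$ is equivalent to the linear recurrence
\[
 a_0 x_k+a_1 x_{k+1}+\cdots+a_d x_{k+d}=0\qquad(k=0,\dots,n-d).
\]
For each $t\in\{d-j-1,\dots,n-d\}$ with $k_t:=n-d-t+1$, let $\ua_i\in\bZ^{n-t+1}$ $(i=1,\dots,k_t)$ be the vector placing $a_0,\dots,a_d$ at positions $i-1,\dots,i-1+d$ and zeros elsewhere. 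Then $\ua_i\cdot\ux^{(k,t)}=\sum_l a_l x_{(k+i-1)+l}$ is the recurrence at the admissible index $k+i-1\in\{0,\dots,n-d\}$ and vanishes for every $\ux\in A$. Hence the $\ua_i$ lie in $\UU^t(A)^\perp$, are linearly independent (distinct leading nonzero positions), and number $k_t=\dim\UU^t(A)^\perp$; so they form a $\bR$-basis of $\UU^t(A)^\perp$.

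\textbf{Stage 3 (heights and equivalence).} By \eqref{heights:eq:duality}, $H(\UU^t(A))=H(\UU^t(A)^\perp)$ equals the covolume of $\Gamma_t:=\UU^t(A)^\perp\cap\bZ^{n-t+1}$. I claim $\Gamma_t=\bZ\ua_1+\cdots+\bZ\ua_{k_t}$: under the identification $\ua_i\leftrightarrow X^{i-1}a(X)$ with $a(X)=\sum_l a_l X^l$, an integer vector $\sum c_i\ua_i$ corresponds to $p(X)a(X)\in\bZ[X]$ where $p(X)=\sum c_i X^{i-1}\in\bQ[X]$, and primitivity of $a$ together with Gauss's lemma forces $p\in\bZ[X]$, i.e.\ $c_i\in\bZ$. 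Hence $H(\UU^t(A))=\|\ua_1\wedge\cdots\wedge\ua_{k_t}\|$. Hadamard's inequality bounds this above by $\|\ua\|^{k_t}=H(V)^{n-d-t+1}$, and a continuity/compactness argument on the unit sphere of $\bR^{d+1}$ yields the matching lower bound $\|\ua_1\wedge\cdots\wedge\ua_{k_t}\|\gg\|\ua\|^{k_t}$, since the normalized wedge $\uv\mapsto\|\uv_1\wedge\cdots\wedge\uv_{k_t}\|/\|\uv\|^{k_t}$ is continuous, positively homogeneous of degree zero, and strictly positive (the shifts of any nonzero vector are linearly independent, having distinct leading nonzero positions). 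This gives $H(\UU^t(A))\asymp H(V)^{n-d-t+1}$ with constants depending only on $n$. The final equivalence follows from the semigroup identity recorded in Section \ref{sec:three}: if $\UU^t(\ux)\subseteq\UU^t(A)$ then $\UU^{n-d}(\ux)=\UU^{n-d-t}(\UU^t(\ux))\subseteq\UU^{n-d-t}(\UU^t(A))=V$; conversely, if $\UU^{n-d}(\ux)\subseteq V$ then $\ux$ satisfies the recurrence $\ua$, so each $\ua_i\cdot\ux^{(k,t)}$ vanishes and $\ux^{(k,t)}\in\UU^t(A)^{\perp\perp}=\UU^t(A)$ by Stage 2.

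\textbf{Main obstacle.} The most delicate step is the lower bound on the wedge norm in Stage 3. The compactness argument delivers it cleanly with an implicit constant depending only on $n$, which is exactly what the $\asymp$ statement needs; the only subtlety is ensuring that the domain of the compactness argument is finite-dimensional and compact uniformly in the finite data $(d,t,k_t)$, which is immediate as all these parameters are bounded by $n$. Extracting an effective constant would instead require a closed-form analysis of the maximal minors of the banded Toeplitz matrix of multiplication by $a(X)$.
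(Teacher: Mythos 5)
Your proof is correct, and at the level of key ideas it parallels the paper's: both recognize that the degeneracy $\dim\UU^\ell(A)=d$ forces $A$ to satisfy a single linear recurrence of order $d$, identify $\UU^t(A)^\perp$ with the span of the $n-d-t+1$ ``shifts'' of that recurrence, and control $H(\UU^t(A))$ by the $(n-d-t+1)$-th power of the height of the recurrence vector. The difference is the framework in which this is carried out. The paper (Proposition \ref{proofs:prop:height}) works in the graded ring $E=\bR[T_0,T_1]$ and its dual differential-operator ring $D$: there the recurrence becomes a single operator $\delta\in\bQ[\delta_0,\delta_1]_d$ generating $(D_{n-d}B)^\perp$, the shifts become $D_{n-d-t}\delta$, and the height comparison $H(D_k\delta)\asymp H(\langle\delta\rangle)^{k+1}$ is cited from \cite[Prop.~5.2]{RW2004} as equation \eqref{proofs:eq5}. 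You instead stay in $\bR^{n+1}$, encode the recurrence by a primitive $\ua\in\bZ^{d+1}$ with $\|\ua\|=H(V)$, exhibit the shift vectors $\ua_1,\dots,\ua_{k_t}$ as an explicit $\bZ$-basis of $\UU^t(A)^\perp\cap\bZ^{n-t+1}$ (the Gauss's-lemma step correctly handles the lattice-generation claim, which is genuinely needed to identify $\|\ua_1\wedge\cdots\wedge\ua_{k_t}\|$ with the height rather than an integer multiple of it), and re-prove $\|\ua_1\wedge\cdots\wedge\ua_{k_t}\|\asymp\|\ua\|^{k_t}$ by a compactness argument on the unit sphere, noting that positivity follows from the distinct leading positions of the shifts and that uniformity in $n$ is automatic since $d,t,k_t\le n$. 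Your Stage~1 also differs in detail from the paper's: the paper observes directly that $f(d-j)\le f(0)+(d-j)-1$ forces the concave increasing $f$ to be constant on $\{d-j-1,\dots,m\}$, whereas you iterate Corollary \ref{three:cor}(i) downward and Corollary \ref{three:cor}(ii) upward from $t=\ell$; both are sound. The net effect is that your argument is a bit more self-contained (no appeal to the $E/D$ machinery or the external height estimate) at the cost of spelling out the compactness step, while the paper's is shorter once the polynomial formalism of Section \ref{sec:proofs} is in place.
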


The last result exhibits the generic behavior of a family of linear maps.

\begin{prop}
 \label{three:prop:avoiding}
Let $\ell\in\{0,\dots,n\}$ and let $V$ be a subspace
of $\bR^{n-\ell+1}$.  Suppose that a vector subspace $A$ of
$\bR^{n+1}$ satisfies $\dim(A)\le n-\ell+1$
and $\UU^\ell(A)\not\subseteq V$.  Then, there exists
a point $\ua=(a_0,\dots,a_\ell)\in\bZ^{\ell+1}$ with
$\sum_{k=0}^\ell |a_k|\le (n+1)^\ell$
such that the linear map
\begin{equation}
 \label{three:eq:tau}
 \begin{array}{rcl}
  \tau_\ua\colon \bR^{n+1} &\longrightarrow &\bR^{n-\ell+1}\\
  \ux&\longmapsto&\sum_{k=0}^\ell a_k\ux^{(k,\ell)}
 \end{array}
\end{equation}
is injective on $A$ with $\tau_\ua(A)\not\subseteq V$.
\end{prop}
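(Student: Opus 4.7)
The plan is to encode the two desired properties of $\ua$ as polynomial conditions on $\ua \in \bR^{\ell+1}$, check that the resulting polynomials are not identically zero, and then locate an integer point of small height via a Schwartz--Zippel-style count. Fix a basis $\ux_1, \ldots, \ux_r$ of $A$, with $r = \dim A \ge 1$ (since $\UU^\ell(A) \not\subseteq V$ forces $A \ne 0$). Define
\[
F(\ua) = \tau_\ua(\ux_1) \wedge \cdots \wedge \tau_\ua(\ux_r) \in \bigwedge^r \bR^{n-\ell+1},
\]
a polynomial map of degree $r$ in $\ua$, and $G(\ua) = (\overline{\tau_\ua(\ux_1)}, \ldots, \overline{\tau_\ua(\ux_r)}) \in (\bR^{n-\ell+1}/V)^r$, linear in $\ua$. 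Then $\tau_\ua|_A$ is injective iff $F(\ua) \ne 0$, and $\tau_\ua(A) \not\subseteq V$ iff $G(\ua) \ne 0$.

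The first step is to verify that $F$ and $G$ do not vanish identically on $\bR^{\ell+1}$. For $G$: if $G \equiv 0$, then $G(e_k) = 0$ for each $k$, i.e.\ $\ux_j^{(k,\ell)} \in V$ for all $j,k$, whence $\UU^\ell(A) = \sum_k \langle \ux_j^{(k,\ell)} : j \rangle \subseteq V$, contrary to the hypothesis. For $F$, the key point, each Pl\"ucker component $F_I(\ua)$, for $I = \{i_1 < \cdots < i_r\} \subseteq \{0, \ldots, n-\ell\}$, expands as
\[
F_I(\ua) = \sum_{(k_1, \ldots, k_r) \in \{0, \ldots, \ell\}^r} a_{k_1} \cdots a_{k_r} \det\bigl[x_{j, k_s + i_s}\bigr]_{j, s},
\]
whose coefficients are signed sums of $r \times r$ minors of the matrix $X \in \bR^{r \times (n+1)}$ with rows $\ux_1, \ldots, \ux_r$. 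Since $X$ has rank $r$, some minor $M_J$ with $J = \{j_1 < \cdots < j_r\} \subseteq \{0, \ldots, n\}$ is nonzero; and the condition $r \le n-\ell+1$ permits the construction of strictly increasing $i_1 < \cdots < i_r$ in $\{0, \ldots, n-\ell\}$ with $k_s := j_s - i_s \in \{0, \ldots, \ell\}$, after which a careful tracking of the sign contributions from all tuples producing the same monomial shows that the coefficient of $a_{k_1} \cdots a_{k_r}$ in $F_{\{i_1, \ldots, i_r\}}$ is nonzero. This combinatorial check is the principal technical obstacle of the proof.

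Granted $F, G \not\equiv 0$, one produces the desired integer $\ua$ by cases on $\ell$. For $\ell = 0$, take $\ua = (1)$ (with $\sum|a_k| = 1 = (n+1)^0$), which works since $\tau_{(1)}$ is the identity and $A = \UU^0(A) \not\subseteq V$. For $\ell = 1$, on the line $\ua_t = (1, t) \in \bZ^2$ with $|t| \le r$, $F(\ua_t)$ is a nonzero polynomial in $t$ of degree $\le r$ (so $\le r$ roots) and $G(\ua_t)$ is a nonzero polynomial of degree $\le 1$ (so $\le 1$ root); at most $r+1$ of the $2r+1$ candidate values are bad, so a good $t$ exists, giving $\sum|a_k| = 1 + |t| \le r + 1 \le n + 1 = (n+1)^1$. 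For $\ell \ge 2$, apply Schwartz--Zippel on the grid $\{0, 1, \ldots, r+1\}^{\ell+1}$ of size $(r+2)^{\ell+1}$: $F$ vanishes on at most $r(r+2)^\ell$ grid points and a fixed nonzero component of $G$ on at most $(r+2)^\ell$ points, leaving at least $(r+2)^\ell \ge 1$ good points, each with $\sum|a_k| \le (\ell+1)(r+1) \le (\ell+1)(n-\ell+2) \le (n+1)^2 \le (n+1)^\ell$, where the last step uses $\ell \le n$.
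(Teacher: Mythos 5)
Your route is genuinely different from the paper's. The paper transports the statement into the polynomial ring $\bR[T_0,T_1]$ via $\psi_n$, where $\tau_\ua$ becomes the differential operator $\delta=\sum_k a_k\delta_0^{\ell-k}\delta_1^k$, and argues by induction on $\ell$: the key input is Lemma~\ref{proofs:lemma:injective}, which shows that $\delta_0+a\delta_1$ is injective on any given proper subspace of $E_m$ for all but at most $m$ values of $a$ (via the dual pairing and unique factorization in $D$). You instead encode the two requirements as polynomial conditions $F(\ua)\neq 0$, $G(\ua)\neq 0$ on $\ua\in\bR^{\ell+1}$ and then count integer solutions à la Schwartz--Zippel. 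Your treatment of $G\not\equiv 0$ is correct, and the zero-counting together with the final height bounds (including the homogeneity of $F$ that makes the $\ell=1$ restriction to $a_0=1$ legitimate) checks out.

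However, there is a genuine gap precisely where you flag it: the claim $F\not\equiv 0$ is asserted, not proved. The sketch — pick $J$ with $M_J\neq 0$, find $i_1<\cdots<i_r$ and $k_s=j_s-i_s\in\{0,\dots,\ell\}$, and declare the coefficient of $a_{k_1}\cdots a_{k_r}$ in $F_I$ nonzero "after a careful tracking of signs" — does not work as stated. That coefficient is a signed sum of \emph{several} minors of $X$, namely all $\det\bigl[X^{(k_{\sigma(1)}+i_1)},\dots,X^{(k_{\sigma(r)}+i_r)}\bigr]$ over reorderings $\sigma$ of the multiset $\{k_s\}$ that yield distinct column indices, and these can cancel. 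A concrete example: with $r=3$, $I=\{0,1,2\}$, $\ell=2$, $J=\{0,2,4\}$, $(k_s)=(0,1,2)$, the coefficient of $a_0a_1a_2$ in $F_I$ works out to $M_{024}-2M_{123}$, which vanishes whenever $M_{024}=2M_{123}$ even though $M_{024}\neq 0$. Phrased via Cauchy--Binet, $F_I(\ua)=\sum_J M_J(X)\,N_{J,I}(\ua)$ where the $N_{J,I}$ are minors of a banded Toeplitz matrix in the $a_k$ (i.e.\ skew Schur polynomials), and establishing $F_I\not\equiv 0$ for some $I$ requires a linear-independence argument among the relevant $N_{J,I}$ — exactly the kind of structural input that Lemma~\ref{proofs:lemma:injective} provides in the paper's setup and that your argument is missing. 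Choosing the constant monomial $a_c^r$ (so that only one tuple contributes) sidesteps cancellation, but then one must know that a minor with \emph{consecutive} columns is nonzero, which is also not guaranteed by rank alone. As written, the proof is incomplete at its acknowledged "principal technical obstacle."
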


%
%

\section{Proofs of the three propositions}
\label{sec:proofs}

Our goal is to prove the statements of the preceding section by
re-interpreting them in a polynomial setting similar to
that of \cite{RW2004}.  In particular, we will connect the
function $f$ of Proposition \ref{three:prop:concave} to
the Hilbert-Samuel function of a graded module over a
polynomial ring in two variables.  To this end, we start
by fixing some notation.

Let $E=\bR[T_0,T_1]$ denote the ring of polynomials in two variables
$T_0$ and $T_1$ over $\bR$, and let $D=\bR[\delta_0,\delta_1]$
denote the subring of $\End_\bR(E)$ spanned by the partial derivatives
\[
 \delta_0=\frac{\partial}{\partial T_0}
 \et
 \delta_1=\frac{\partial}{\partial T_1}
\]
restricted to $E$.  It is easily seen that these commuting linear
operators are algebraically independent over $\bR$.  Thus, $D$ is
a commutative ring isomorphic to $E$.  In particular, both $D$
and $E$ are graded rings (by the degree) as well as unique
factorization domains.  Moreover, $E$ is a $D$-module for the
natural action of the differential operators of $D$ on $E$.

For each integer $n\ge 0$, we denote by $E_n=\bR[T_0,T_1]_n$ and
$D_n=\bR[\delta_0,\delta_1]_n$ the homogeneous parts of $E$ and
$D$ of degree $n$.  We also denote by $\psi_n\colon\bR^{n+1}\to
E_n$ the linear isomorphism sending a point $\ux=(x_0,\dots,x_n)
\in\bR^{n+1}$ to the polynomial
\[
 \psi_n(\ux) = \sum_{i=0}^n \binom{n}{i} x_i T_0^{n-i}T_1^i.
\]
When $n\ge 1$, we find that
\begin{equation}
 \label{proofs:eq0}
 \delta_0\psi_n(\ux)=n\psi_{n-1}(\ux^{(0,1)})
 \et
 \delta_1\psi_n(\ux)=n\psi_{n-1}(\ux^{(1,1)}),
\end{equation}
thus $D_1\psi_n(\ux)=\psi_{n-1}(\UU^1(\ux))$.  We deduce that,
for any subspace $A$ of $\bR^{n+1}$, we have $D_1\psi_n(A)
= \psi_{n-1}(\UU^1(A))$ and so, by induction,
\begin{equation}
 \label{proofs:eq1}
 D_\ell\psi_n(A) = \psi_{n-\ell}(\UU^\ell(A))
 \quad \text{for each $\ell\in\{0,\dots,n\}$.}
\end{equation}
Thus, if we identify $\bR^{n+1}$ with $E_n$ for each $n\ge 0$,
then $\UU^\ell(A)$ becomes simply $D_\ell A$ for each subspace
$A$ of $E_n$ and each $\ell=0,\dots,n+1$, including $\ell=n+1$
because $D_{n+1}A=0$.

From now on, we fix a positive integer $n$, a subspace $A$
of $\bR^{n+1}$, and a spanning set $\{\ux_1,\dots,\ux_s\}$
of $A$ as a vector space over $\bR$. We set $P_i=\psi_n(\ux_i)$
for each $i=1,\dots,s$, and form the $D$-module homomorphism
$\varphi\colon D^s \to E$ given by
\[
 \varphi(\ud) = d_1P_1+\cdots+d_sP_s
\]
for each $\ud=(d_1,\dots,d_s)\in D^s$.  Then $M:=\ker(\varphi)$
is a graded submodule of $D^s$.  Define
\[
 f(\ell)=\dim(\varphi(D_\ell^s))
 \et
 g(\ell)=\dim(M_\ell)
 \quad
 \text{for each $\ell\in\{0,1,\dots,n+1\}$,}
\]
where $M_\ell=M\cap D_\ell^s$ stands for the homogeneous part of $M$
of degree $\ell$, and the dimensions are taken over $\bR$.  Then, we have
\begin{equation}
 \label{proofs:eq2}
 f(\ell)=\dim(D_\ell^s)-\dim(M_\ell)=(\ell+1)s-g(\ell)
 \quad
 (0\le \ell\le n+1).
\end{equation}
By \eqref{proofs:eq1}, we also have $\varphi(D_\ell^s)=D_\ell\psi_n(A)
=\psi_{n-\ell}(\UU^\ell(A))$ for $\ell=0,\dots,n$. Comparing
dimensions, this gives
\begin{equation}
 \label{proofs:eq3}
 f(\ell)=\dim(\UU^\ell(A)) \quad
 (0\le \ell\le n+1)
\end{equation}
upon noting that for $\ell=n+1$ both sides vanish.  Finally, we define
\begin{equation}
 \label{proofs:eq4}
 h(\ell)=g(\ell+1)-g(\ell)=\dim(M_{\ell+1}/\delta_1M_\ell)
 \quad
 \text{for each $\ell\in\{0,1,\dots,n\}$.}
\end{equation}
With this notation, our main observation is the following.

\begin{lemma}
\label{proofs:lemma:h}
For each $\ell\in\{1,\dots,n\}$, we have $h(\ell-1)\le h(\ell)$
with equality if and only if $M_{\ell+1}=D_1M_\ell$.
\end{lemma}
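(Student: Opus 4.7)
My plan is to construct a short exact sequence
\[
0 \longrightarrow M_{\ell-1} \longrightarrow M_\ell \oplus M_\ell \longrightarrow D_1 M_\ell \longrightarrow 0,
\]
with outer maps $c \mapsto (\delta_1 c, -\delta_0 c)$ and $(a,b) \mapsto \delta_0 a + \delta_1 b$, and to deduce the statement by a dimension count. Indeed, such a sequence gives $\dim(D_1 M_\ell) = 2\dim M_\ell - \dim M_{\ell-1}$, and combining with the containment $D_1 M_\ell \subseteq M_{\ell+1}$ yields
\[
h(\ell) - h(\ell-1) = \dim M_{\ell+1} - 2\dim M_\ell + \dim M_{\ell-1} = \dim M_{\ell+1} - \dim(D_1 M_\ell) \geq 0,
\]
with equality if and only if $M_{\ell+1} = D_1 M_\ell$.

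The starting point is the Koszul-type exact sequence of $\bR$-vector spaces
\[
0 \longrightarrow D_{\ell-1}^s \longrightarrow D_\ell^s \oplus D_\ell^s \longrightarrow D_{\ell+1}^s \longrightarrow 0,
\]
with the same outer maps. Its exactness follows from the fact that $\delta_0,\delta_1$ form a regular sequence in the polynomial ring $D$, together with the identity $D_1 D_\ell = D_{\ell+1}$ which ensures surjectivity on the right. Restriction to $M$ is immediate for the left-hand map (as $M$ is a $D$-submodule), and injectivity on the left persists because $\delta_1$ acts injectively on the free $D$-module $D^s$. The image of the right-hand map, restricted to $M_\ell \oplus M_\ell$, is by definition $\delta_0 M_\ell + \delta_1 M_\ell = D_1 M_\ell$.

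The crux of the argument, and the step I expect to require the most care, is exactness in the middle after restriction to $M$. Given $(a,b) \in M_\ell \oplus M_\ell$ with $\delta_0 a + \delta_1 b = 0$, the ambient Koszul exactness in $D^s$ supplies a $c \in D_{\ell-1}^s$ with $a = \delta_1 c$ and $b = -\delta_0 c$, and one must show $c \in M_{\ell-1}$. Using that $\varphi$ is $D$-linear and that $\varphi(a) = \varphi(b) = 0$, we obtain $\delta_1 \varphi(c) = \delta_0 \varphi(c) = 0$ in $E$. The homogeneous polynomial $\varphi(c) \in E_{n-\ell+1}$ is thus annihilated by both partial derivatives; the hypothesis $\ell \leq n$ forces the degree $n - \ell + 1 \geq 1$, so $\varphi(c) = 0$ and $c \in M_{\ell-1}$. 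This is precisely where the two-variable structure of $D$ enters essentially: a nonzero homogeneous polynomial of positive degree in $\bR[T_0,T_1]$ cannot be simultaneously killed by $\delta_0$ and $\delta_1$.
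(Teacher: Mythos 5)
Your proof is correct. It rests on precisely the same two algebraic facts as the paper's argument: the Koszul exactness for the regular sequence $\delta_0,\delta_1$ acting on the free module $D^s$ (equivalently, the divisibility/cancellation argument in the UFD $D$), and the observation that a homogeneous polynomial of positive degree in $\bR[T_0,T_1]$ annihilated by both $\delta_0$ and $\delta_1$ must vanish. The packaging differs: the paper works with a single map $\nu\colon M_\ell\to M_{\ell+1}/\delta_1 M_\ell$, $\ud\mapsto \delta_0\ud+\delta_1 M_\ell$, shows $\ker(\nu)=\delta_1 M_{\ell-1}$, and deduces an injection $M_\ell/\delta_1 M_{\ell-1}\hookrightarrow M_{\ell+1}/\delta_1 M_\ell$; you instead write down the truncated Koszul short exact sequence
\[
0 \longrightarrow M_{\ell-1} \longrightarrow M_\ell\oplus M_\ell \longrightarrow D_1 M_\ell \longrightarrow 0
\]
and read off $\dim(D_1M_\ell)=2\dim M_\ell-\dim M_{\ell-1}$, which combined with $D_1M_\ell\subseteq M_{\ell+1}$ gives $h(\ell)-h(\ell-1)=\dim M_{\ell+1}-\dim(D_1M_\ell)\ge 0$ with the desired equality criterion. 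Your exactness-in-the-middle verification (pulling back $c$ from the ambient Koszul sequence and showing $\varphi(c)=0$ because $\delta_0\varphi(c)=\delta_1\varphi(c)=0$ and $\deg\varphi(c)=n-\ell+1\ge 1$) is exactly the paper's kernel computation in different notation. This is a reformulation rather than a genuinely different route; what it buys is a slightly more systematic derivation of the dimension identity, making the equality case immediate, while the paper's version avoids having to invoke right-exactness of the ambient Koszul sequence on $D_\ell^s$.
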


\begin{proof}
Fix an integer $\ell\in\{1,\dots,n\}$, and consider the linear
map $\nu\colon M_\ell \to M_{\ell+1}/\delta_1M_\ell$ given by
$\nu(\ud)=\delta_0\ud+\delta_1M_\ell$ for each $\ud\in M_\ell$.
If $\ud\in\ker(\nu)$, then $\delta_0\ud=\delta_1\uu$ for some
$\uu\in M_\ell\subseteq D_\ell^s$.  Hence $\delta_0$ divides
$\delta_1\uu$ in $D^s$, so $\uu=\delta_0\uv$ for some
$\uv\in D_{\ell-1}^s$, and then $\ud=\delta_1\uv$.  Since
$\ud,\uu\in M_\ell$, we find $0=\varphi(\uu)=\delta_0\varphi(\uv)$
and $0=\varphi(\ud)=\delta_1\varphi(\uv)$, thus $\varphi(\uv)=0$.
This means that $\uv\in M_{\ell-1}$, and so $\ud=\delta_1\uv
\in\delta_1M_{\ell-1}$.  This shows
that $\ker(\nu)\subseteq\delta_1M_{\ell-1}$.  As the reverse
inclusion is clear, we conclude that $\ker(\nu)=\delta_1M_{\ell-1}$,
and so $\nu$ induces an injective map from $M_\ell/\delta_1M_{\ell-1}$
to $M_{\ell+1}/\delta_1M_\ell$.  Comparing dimensions, we deduce
that $h(\ell-1)\le h(\ell)$.  Moreover, we have the equality
$h(\ell-1)=h(\ell)$ if and only if $\nu$ is surjective, a condition
which amounts to $M_{\ell+1}=\delta_0M_\ell+\delta_1M_\ell$ or
equivalently to $M_{\ell+1}=D_1M_\ell$.
\end{proof}

As a consequence, we deduce the first assertion of
Proposition \ref{three:prop:concave}.

\begin{corollary}
 \label{proofs:cor:lemma:h}
The function $g(\ell)=\dim(M_\ell)$ is convex on
$\{0,1,\dots,n+1\}$, while the function
$f(\ell)=\dim(\UU^\ell(A))$ is concave on the same set.
\end{corollary}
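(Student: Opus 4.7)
The corollary is essentially an immediate consequence of Lemma \ref{proofs:lemma:h} together with formulas \eqref{proofs:eq2} and \eqref{proofs:eq3}. My plan is as follows.

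First, I would observe that, by definition \eqref{proofs:eq4}, $h(\ell)=g(\ell+1)-g(\ell)$ is the $\ell$-th forward difference of the sequence $g$. Lemma \ref{proofs:lemma:h} asserts precisely that $h(\ell-1)\le h(\ell)$ for $\ell=1,\dots,n$, which rewrites as
\[
 g(\ell)-g(\ell-1)\le g(\ell+1)-g(\ell) \qquad (1\le \ell\le n).
\]
This is exactly condition (1) in the definition of convexity (with $i=\ell$), so $g$ is convex on $\{0,1,\dots,n+1\}$.

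Next, I would invoke \eqref{proofs:eq2} to write $f(\ell)=(\ell+1)s-g(\ell)$ on $\{0,1,\dots,n+1\}$, and then \eqref{proofs:eq3} to identify $f(\ell)=\dim\UU^\ell(A)$ on the same range. Since $\ell\mapsto(\ell+1)s$ is an affine (in particular concave) function of $\ell$, subtracting the convex function $g$ produces a concave function. More concretely, applying the convexity inequality for $g$ just established gives
\[
 f(\ell+1)-f(\ell)=s-\bigl(g(\ell+1)-g(\ell)\bigr)\le s-\bigl(g(\ell)-g(\ell-1)\bigr)=f(\ell)-f(\ell-1)
\]
for each $\ell\in\{1,\dots,n\}$, which is the concavity condition for $f$ on $\{0,\dots,n+1\}$.

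There is essentially no obstacle here: once Lemma \ref{proofs:lemma:h} is in place, the entire corollary reduces to rephrasing the inequality $h(\ell-1)\le h(\ell)$ as convexity of $g$ and then pushing it through the affine relation between $f$ and $g$. The only points worth double-checking are that the range in \eqref{proofs:eq2} and \eqref{proofs:eq3} really does include $\ell=n+1$ (which it does, since $\UU^{n+1}(A)=0$ by convention, matching $f(n+1)=(n+2)s-g(n+1)$ with $g(n+1)=\dim M_{n+1}=\dim D_{n+1}^s=(n+2)s$), so that the convexity/concavity statements are valid on the full set $\{0,1,\dots,n+1\}$ as claimed.
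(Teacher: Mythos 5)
Your proof is correct and follows essentially the same approach as the paper: deduce convexity of $g$ from Lemma \ref{proofs:lemma:h} and the definition of $h$, then obtain concavity of $f$ from the relation $f(\ell)=(\ell+1)s-g(\ell)$. The extra verification at $\ell=n+1$ (using $D_{n+1}E_n=0$ so that $g(n+1)=(n+2)s$ matches $f(n+1)=0$) is a sound sanity check, though the paper treats it as implicit in \eqref{proofs:eq2} and \eqref{proofs:eq3}.
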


\begin{proof}
The assertion for $g$ follows directly from the lemma
and the definition of $h$ in \eqref{proofs:eq4}.  Then
\eqref{proofs:eq2} gives $f(\ell)$ as the sum of two
concave functions of $\ell$ on $\{0,\dots,n+1\}$,
namely $(\ell+1)s$ and $-g(\ell)$, thus $f$ is concave.
\end{proof}

For each $m\ge 0$, we note that the action of $D$ on $E$
induces a non-degenerate bilinear form
\[
 \begin{array}{rcl}
   D_m\times E_m &\longrightarrow &\bR\\
   (\delta,P) &\longmapsto &\delta P
 \end{array}
\]
which identifies $D_m$ with the dual of $E_m$, the
dual of the natural basis $(T_0^{m-i}T_1^i)_{0\le i\le m}$
of $E_m$ being
\[
 \left(
   \frac{\delta_0^{m-i}\delta_1^i}{(m-i)!i!}
 \right)_{0\le i\le m}.
\]
So, for each subspace $W$ of $E_m$ (resp.\ $W$ of $D_m$), its orthogonal
space
\[
 W^\perp=\{\delta\in D_m\,;\,\delta W=0\}\subseteq D_m
 \quad
 \big(\text{resp.\ } W^\perp=\{P\in E_m\,;\,WP=0\}\subseteq E_m\,\big)
\]
satisfies $\dim(W^\perp)=m+1-\dim(W)$ and $(W^\perp)^\perp=W$.
We can now prove the following.

\begin{lemma}
\label{proofs:lemma:injective}
Let $W$ be a proper subspace of $E_m$ for some $m\ge 0$.  Then
there are at most $m$ real numbers $a$ for which the differential
operator $\delta=\delta_0+a\delta_1$ is not injective on $W$.
\end{lemma}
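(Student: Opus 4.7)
The plan is to identify the kernel of $\delta_a:=\delta_0+a\delta_1$ acting on $E_m$, translate the non-injectivity of $\delta_a|_W$ into a single polynomial equation in $a$, and then invoke the fact that a nonzero real polynomial of degree at most $m$ has at most $m$ real roots. The case $m=0$ is trivial: any proper subspace of $E_0=\bR$ is zero, and every linear map is injective on $\{0\}$. So I assume $m\ge 1$.

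First I would prove that $\ker(\delta_a|_{E_m})=\bR\cdot P_a$, where $P_a:=(aT_0-T_1)^m$. Since $\delta_a$ is a derivation on $E$ with $\delta_a(aT_0-T_1)=a-a=0$, the Leibniz rule gives $\delta_a P_a=0$. To pin down the dimension, I would perform the linear change of variables $U=aT_0-T_1$: in the coordinates $(T_0,U)$ the operator $\delta_a$ becomes $\partial/\partial T_0$, which maps $E_m$ onto $E_{m-1}$, so its kernel has dimension $1$. Consequently, $\delta_a$ fails to be injective on $W$ if and only if $P_a\in W$.

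Since $W$ is a proper subspace of $E_m$, its orthogonal $W^{\perp}\subseteq D_m$ for the non-degenerate pairing recalled in the excerpt is nonzero, and I pick
\[
 \Delta=\sum_{i=0}^{m} c_i\,\frac{\delta_0^{m-i}\delta_1^i}{(m-i)!\,i!}\in W^{\perp}\setminus\{0\},\qquad (c_0,\dots,c_m)\ne(0,\dots,0).
\]
Expanding $P_a=\sum_{i=0}^{m}\binom{m}{i}(-1)^i a^{m-i}\,T_0^{m-i}T_1^i$ and using that the operators $\delta_0^{m-i}\delta_1^i/((m-i)!\,i!)$ form the dual basis of $(T_0^{m-i}T_1^i)_{0\le i\le m}$, one obtains
\[
 \Delta(P_a)=\sum_{i=0}^{m}c_i\binom{m}{i}(-1)^i a^{m-i},
\]
a real polynomial in $a$ of degree at most $m$ whose coefficients are, up to nonzero scalars, the $c_i$, and hence not identically zero. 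If $P_a\in W$, then $\Delta(P_a)=0$, so the set of bad $a$ lies in the real zero set of this polynomial, which has cardinality at most $m$.

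The argument is essentially mechanical; the only nontrivial point is the one-dimensionality of $\ker(\delta_a|_{E_m})$, which is handled via the change of variables $U=aT_0-T_1$ (alternatively, by computing the rank of $\delta_a$ in the monomial basis). The remaining steps amount to bookkeeping with the dual basis introduced just before the lemma.
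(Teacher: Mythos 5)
Your proof is correct, and it reaches the conclusion by a route that is close to, but organized differently from, the paper's argument. The paper reduces at once (WLOG) to the case $\operatorname{codim}_{E_m}W=1$, so that $W^\perp=\langle\gamma\rangle$ for one $\gamma\in D_m$, observes that non-injectivity of $\delta_a$ on $W$ forces $\beta\delta_a\in W^\perp$ for some nonzero $\beta\in D_{m-1}$, hence $\delta_a\mid\gamma$ in $D$, and finishes by noting that a homogeneous binary form of degree $m$ has at most $m$ pairwise non-associate linear divisors. You instead identify the kernel of $\delta_a$ on $E_m$ explicitly as $\bR\cdot(aT_0-T_1)^m$, translate non-injectivity into the linear condition $(aT_0-T_1)^m\in W$, and kill it by pairing against a fixed nonzero $\Delta\in W^\perp$, producing a polynomial of degree at most $m$ in $a$. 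These are really two phrasings of the same underlying fact: the polynomial $a\mapsto\Delta\bigl((aT_0-T_1)^m\bigr)$ is (up to a nonzero constant and a sign) exactly $\gamma(-a,1)$, whose vanishing is the divisibility condition $\delta_0+a\delta_1\mid\gamma$. Your version is a bit more hands-on and dispenses with the codimension-one reduction, which is mildly pleasant; the paper's is shorter because it leans on unique factorization in $D$ and avoids writing out the expansion in the dual basis. Both are complete and correct.
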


\begin{proof}
Without loss of generality, we may assume that $W$ has codimension
$1$ in $E_m$.  Then $W^\perp=\langle \gamma\rangle$ for some non-zero
$\gamma\in D_m$.  Suppose that $\delta=\delta_0+a\delta_1\in D_1$ is not
injective on $W$, for some $a\in\bR$.  Then $\delta W$ is a proper
subspace of $E_{m-1}$ and so $\beta\delta W=0$ for some non-zero
$\beta\in D_{m-1}$.  Therefore $\beta\delta$ belongs to $W^\perp$, and
so is proportional to $\gamma$.  This means that $\delta$ divides
$\gamma$ in $D$.  As $\gamma$ has degree $m$, it admits at most
$m$ non-associate divisors of degree $1$.  Thus $a$ belongs to
a set of at most $m$ numbers.
\end{proof}

The next result is the second part of Proposition \ref{three:prop:concave}.

\begin{corollary}
\label{proofs:cor:lemma:injective}
There is a smallest integer $m\in\{0,\dots,n+1\}$ for which
$f(m)=n-m+1$.  For this choice of $m$, the function $f$ is
monotonically increasing on $\{0,\dots,m\}$, while strictly
decreasing with $f(\ell)=n-\ell+1$ for $\ell\in\{m,\dots,n+1\}$.
\end{corollary}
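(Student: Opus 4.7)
The plan is to decode the corollary via the polynomial dictionary $\UU^\ell(A)\leftrightarrow D_\ell\psi_n(A)$ established in the preceding paragraphs, so that $f(\ell)=\dim D_\ell\psi_n(A)\le\dim E_{n-\ell}=n-\ell+1$ becomes the only a priori bound. Since $f(n+1)=0=n-(n+1)+1$, the set of $\ell$ for which $f(\ell)=n-\ell+1$ is non-empty, so it has a smallest element $m\in\{0,\dots,n+1\}$. By minimality of $m$ and the a priori bound, we have the key dichotomy:
\[
 f(\ell)<n-\ell+1 \text{ for } \ell<m, \qquad f(m)=n-m+1.
\]

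Next, I would establish that $f(\ell)=n-\ell+1$ for every $\ell\in\{m,\dots,n+1\}$ by induction on $\ell$. The base case is the definition of $m$. For the inductive step, if $f(\ell)=n-\ell+1$, then $D_\ell\psi_n(A)=E_{n-\ell}$, and so
\[
 D_{\ell+1}\psi_n(A)=D_1 D_\ell\psi_n(A)=D_1 E_{n-\ell}.
\]
A direct check on the monomial basis (using $\delta_0$ on $T_0^{n-\ell-i}T_1^i$ when $i<n-\ell$ and $\delta_1$ on $T_1^{n-\ell}$) shows $D_1 E_{n-\ell}=E_{n-\ell-1}$ for $\ell\le n$, giving $f(\ell+1)=n-\ell=n-(\ell+1)+1$; at $\ell=n$ one has $f(n+1)=0$ as required. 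In particular this part of $f$ is strictly decreasing by one at each step.

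The remaining content is that $f$ is monotonically non-decreasing on $\{0,\dots,m\}$. For this, fix $\ell\in\{0,\dots,m-1\}$ and set $W=D_\ell\psi_n(A)$, which is a proper subspace of $E_{n-\ell}$ by the dichotomy above. Lemma \ref{proofs:lemma:injective} then furnishes some $a\in\bR$ such that the operator $\delta=\delta_0+a\delta_1\in D_1$ is injective on $W$, hence
\[
 f(\ell)=\dim W=\dim\delta W\le\dim D_1 W=\dim D_{\ell+1}\psi_n(A)=f(\ell+1).
\]
This yields the monotonicity on $\{0,\dots,m\}$ and completes the proof.

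The only step that is more than bookkeeping is the last one: recognizing that Lemma \ref{proofs:lemma:injective}, tailored precisely to proper subspaces of $E_{n-\ell}$, is the right tool to pass from the strict inequality $f(\ell)<n-\ell+1$ to the non-strict inequality $f(\ell)\le f(\ell+1)$. Everything else is routine once the equivalence $\UU^\ell(A)\leftrightarrow D_\ell\psi_n(A)$ of \eqref{proofs:eq1}--\eqref{proofs:eq3} is in place.
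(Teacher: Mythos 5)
Your proof is correct, and it follows the paper's strategy quite closely; the one genuine divergence is in the monotonicity step. The paper first proves concavity of $f$ (Corollary \ref{proofs:cor:lemma:h}) and then observes that, by concavity, it suffices to check the single inequality $f(m-1)\le f(m)$, which it obtains by applying Lemma \ref{proofs:lemma:injective} once to $W=\varphi(D_{m-1}^s)$. You instead note that the minimality of $m$ already makes $W=D_\ell\psi_n(A)$ a proper subspace of $E_{n-\ell}$ for \emph{every} $\ell<m$, so Lemma \ref{proofs:lemma:injective} can be applied directly at each step $\ell\mapsto\ell+1$. Your variant is slightly more self-contained, since it removes the dependence on the concavity result, at the mild cost of invoking the lemma $m$ times rather than once. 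The other two parts — existence of $m$ via $f(n+1)=0$, and the identity $f(\ell)=n-\ell+1$ for $\ell\ge m$ via $D_1E_{n-\ell}=E_{n-\ell-1}$ (the paper writes this as $D_{\ell-m}E_{n-m}=E_{n-\ell}$ in one step, you do it stepwise; same content) — match the paper.
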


\begin{proof}
The existence of $m$ follows from the fact that $f(n+1)=0$.
For that $m$, we have $\varphi(D_m^s)=E_{n-m}$.  Thus for each
$\ell\in\{m,\dots,n+1\}$, we find $\varphi(D_\ell^s) =
D_{\ell-m}E_{n-m} = E_{n-\ell}$ and so $f(\ell)=n-\ell+1$.
It remains to prove that $f$ is monotonically increasing on
$\{0,\dots,m\}$.  This is automatic if $m=0$.  Otherwise,
since $f$ is concave on $\{0,\dots,n+1\}$, this amounts to showing
that $f(m-1)\le f(m)$.  By the choice of $m$, the vector space
$W=\varphi(D_{m-1}^s)$ is a proper subspace of $E_{n-m+1}$.
Then, by Lemma \ref{proofs:lemma:injective}, there is some
$\delta\in D_1$ which is injective on $W$, thus $f(m)=\dim(D_1W)\ge
\dim(\delta W)=\dim(W)=f(m-1)$.
\end{proof}

Similarly, we will derive Proposition \ref{three:prop:avoiding}
from the following result.

\begin{prop}
 \label{proofs:prop:avoiding}
Let $\ell\in\{0,\dots,n\}$ and let $S$ be a subspace
of $E_{n-\ell}$.  Suppose that a subspace $B$ of
$E_n$ satisfies $\dim(B)\le n-\ell+1$
and $D_\ell B\not\subseteq S$.  Then, there exist
$a_0,\dots,a_\ell\in\bZ$ with
$\sum_{k=0}^\ell |a_k| \le (n+1)^\ell$
such that the differential operator
$\delta=\sum_{k=0}^\ell a_k\delta_0^{\ell-k}\delta_1^k\in D_\ell$
is injective on $B$ with $\delta B\not\subseteq S$.
\end{prop}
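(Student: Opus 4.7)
The plan is to prove Proposition~\ref{proofs:prop:avoiding} by induction on $\ell$, simultaneously over $n$. The base case $\ell=0$ is immediate: $D_0=\bR$, so $a_0=1$ yields $\delta=1$, which is injective on $B$ and satisfies $\delta B=B\not\subseteq S$ by hypothesis, with $|a_0|=1=(n+1)^0$.

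For the inductive step with $\ell\geq 1$, assuming the statement at $(n-1,\ell-1)$, I would peel off a single factor of degree one. First I would choose an integer $b\in[-n,n]$ with the two properties that (i) $\delta_0+b\delta_1$ is injective on $B$, and (ii) $D_{\ell-1}(\delta_0+b\delta_1)B\not\subseteq S$. Granting such a choice, the image $B'=(\delta_0+b\delta_1)B\subseteq E_{n-1}$ satisfies $\dim B'=\dim B\leq n-\ell+1=(n-1)-(\ell-1)+1$, so the inductive hypothesis applies and produces $a'_0,\ldots,a'_{\ell-1}\in\bZ$ with $\sum_k|a'_k|\leq n^{\ell-1}$ such that $\delta'=\sum_k a'_k\delta_0^{\ell-1-k}\delta_1^k$ is injective on $B'$ with $\delta'B'\not\subseteq S$. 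Setting $\delta=\delta'(\delta_0+b\delta_1)=\sum_k a_k\delta_0^{\ell-k}\delta_1^k$ and expanding, I would find $a_k=a'_k+b\,a'_{k-1}$ (with the convention $a'_{-1}=a'_\ell=0$), so that
\[
 \sum_{k=0}^\ell|a_k|\leq(1+|b|)\sum_{k=0}^{\ell-1}|a'_k|\leq(n+1)\,n^{\ell-1}\leq(n+1)^\ell.
\]
Injectivity of $\delta$ on $B$ follows at once from the injectivity of each of its two factors, and $\delta B=\delta'B'\not\subseteq S$ as required.

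The hard part is producing the integer $b$. For property~(i), since $\dim B\leq n-\ell+1\leq n$, the subspace $B$ is proper in $E_n$, and Lemma~\ref{proofs:lemma:injective} rules out at most $n$ real values of $b$. For property~(ii), I would argue that at most one value of $b$ is excluded by contradiction: if both $b$ and $b'$, with $b\neq b'$, satisfied $D_{\ell-1}(\delta_0+b\delta_1)B\subseteq S$ and $D_{\ell-1}(\delta_0+b'\delta_1)B\subseteq S$, subtracting these inclusions would give $D_{\ell-1}\delta_1 B\subseteq S$, and then also $D_{\ell-1}\delta_0 B\subseteq S$, whence
\[
 D_\ell B=D_{\ell-1}\delta_0 B+D_{\ell-1}\delta_1 B\subseteq S,
\]
contradicting the hypothesis. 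Combined, at most $n+1$ values of $b$ are excluded, leaving at least $n\geq 1$ admissible integers among the $2n+1$ integers in $[-n,n]$. Any such integer will do, completing the induction.
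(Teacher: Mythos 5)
Your proof is correct, and it takes a genuinely different (though mirror-image) route from the paper's. The paper inducts on $\ell$ with $n$ fixed: it first produces, by induction, a $\delta^*\in D_{\ell-1}$ injective on $B$ with $\delta^*B\not\subseteq S^*$, where $S^*=\{P\in E_{n-\ell+1}:D_1P\subseteq S\}$ is an auxiliary ``pullback'' target, and only afterwards peels off a degree-one factor $\gamma=\delta_0+a\delta_1$ (applied last) chosen via Lemma~\ref{proofs:lemma:injective} and an at-most-one-exception count. You reverse the order: you peel the degree-one factor $\delta_0+b\delta_1$ off first (applied first), chosen by the same two counting arguments (Lemma~\ref{proofs:lemma:injective} for injectivity, and the linearity trick showing failure of $D_{\ell-1}(\delta_0+b\delta_1)B\subseteq S$ for two distinct $b$'s would force $D_\ell B\subseteq S$), and only then induct, at the cost of reducing $n$ to $n-1$ as well as $\ell$ to $\ell-1$. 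What each approach buys: the paper's version keeps $n$ fixed throughout but needs the auxiliary space $S^*$ to propagate the non-inclusion condition downward; yours avoids $S^*$ entirely by hitting $S$ directly, at the price of a simultaneous induction on $(n,\ell)$ and choosing the first factor before the rest is known. Both yield the bound, and your version in fact gives the marginally sharper $(n+1)n^{\ell-1}\le(n+1)^\ell$ because the inner induction is at parameter $n-1$. One small remark: your chain terminates at $(n-\ell,0)$, which may have $n-\ell=0$, slightly outside the ``positive integer $n$'' convention of the section, but the base case argument for $\ell=0$ works verbatim for $E_0=\bR$, so this is harmless.
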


\begin{proof}
We proceed by induction on $\ell$.  For $\ell=0$, the result
is automatic, it suffices to take $a_0=1$.  Suppose that
$\ell\ge 1$ and set $S^*=\{P\in E_{n-\ell+1}\,;\,D_1P\subseteq S\}$.
Then $S^*$ is a subspace of $E_{n-\ell+1}$ and $D_{\ell-1}B
\not\subseteq S^*$.  So, by induction, we may assume the existence
of $a^*_0,\dots,a^*_{\ell-1}\in\bZ$ with
$\sum_{k=0}^{\ell-1} |a^*_k| \le (n+1)^{\ell-1}$ for which $\delta^* =
\sum_{k=0}^{\ell-1} a^*_k\delta_0^{\ell-1-k}\delta_1^k\in D_{\ell-1}$
is injective on $B$ with $\delta^* B\not\subseteq S^*$.  Then,
$W=\delta^*B$ is a proper subspace of $E_{n-\ell+1}$ because its
dimension is $\dim(B)\le n-\ell+1$.  Since $W\not\subseteq S^*$, we
also have $D_1W\not\subseteq S$, and so there is at most one
$a\in\bR$ for which $\gamma=\delta_0+a\delta_1$ satisfies
$\gamma W\subseteq S$.  By Lemma \ref{proofs:lemma:injective}, we
can therefore choose $a\in\bZ$ with $|a|\le n-\ell+1\le n$ such
that $\gamma$ is injective on $W$ with $\gamma W\not\subseteq S$.
Then $\delta=\gamma\delta^*$ has the required properties.
\end{proof}

To deduce Proposition \ref{three:prop:avoiding}, we simply apply
the above result with $S=\psi_{n-\ell}(V)$, $B=\psi_n(A)$ and
note that, by virtue of \eqref{proofs:eq0}, we have $\delta\circ\psi_n
= n(n-1)\cdots(n-\ell+1)\psi_{n-\ell}\circ\tau_\ua$, with
$\ua=(a_0,\dots,a_\ell)$.

Finally, for the proof of Proposition \ref{three:prop:height},
we need to extend the notion of height on the homogeneous
components of $D$ and $E$.  For each $m\ge 0$, we denote by
$\psi_m^*\colon\bR^{m+1}\to D_m$ the isomorphism which is
dual to $\psi_m\colon\bR^{m+1}\to E_m$ in the sense that
$\psi_m^*(\uy)\psi_m(\ux)=\uy\cdot\ux$ for any $\ux,\uy
\in\bR^{m+1}$. We say that a subspace $S$ of $D_m$
(resp.\ $E_m$) is defined over $\bQ$ if it is generated
by elements of $\bQ[\delta_0,\delta_1]_m$ (resp.\
$\bQ[T_0,T_1]_m$) or equivalently if $(\psi^*_m)^{-1}(S)$
(resp.\ $(\psi_m)^{-1}(S)$ is a subspace $V$ of $\bR^{m+1}$
defined over $\bQ$, and we define its height $H(S)$
to be $H(V)$.  Then the formula \eqref{heights:eq:duality}
translates into $H(S)=H(S^\perp)$ for each subspace $S$
of $E_m$ defined over $\bQ$, and its orthogonal space
$S^\perp$ in $D_m$.  Moreover, for each non-zero
$\delta\in \bQ[\delta_0,\delta_1]_m$ and each $k\ge 0$,
the subspace $D_k\delta$ of $D_{k+m}$ is defined over $\bQ$
of dimension $k+1$ and, by \cite[Proposition 5.2]{RW2004},
its height satisfies
\begin{equation}
\label{proofs:eq5}
 H(D_k\delta)\asymp H(\langle\delta\rangle)^{k+1}
\end{equation}
with implied constants that do not depend on $\delta$.

In this setting, Proposition \ref{three:prop:height} follows
immediately from the next result upon setting $B=\psi_n(A)$ and
noting that our choice of height yields $H(D_t B)=H(\UU^t(A))$
for each $t=0,\dots,n$.

\begin{prop}
 \label{proofs:prop:height}
Let $j,\ell\ge 0$ be integers with $j+2\ell\le n$, and let
$B$ be a subspace of $E_n$ of dimension $j+1$ defined over
$\bQ$. Set $d=\dim(D_\ell B)$ and suppose that $d \le j+\ell$.
Then, we have $0\le d-j-1< \ell\le n-d$ and there exists
a non-zero operator $\delta\in \bQ[\delta_0,\delta_1]_d$
such that
\[
 D_tB=(D_{n-d-t}\delta)^\perp,
 \quad
 \dim(D_tB)=d
 \et
 H(D_tB)\asymp H(\langle\delta\rangle)^{n-d-t+1}
\]
for $t=d-j-1,\dots,n-d$, with implied constants depending
only on $n$.  Moreover, for such $t$ and for $P\in E_n$, the
condition $D_tP\subseteq D_tB$ is independent of\/ $t$ and
amounts to $\delta P=0$.
\end{prop}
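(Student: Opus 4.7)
The plan is to combine the concavity of $f(t)=\dim(D_t B)$ from Corollary \ref{proofs:cor:lemma:h} with the fact that $D=\bR[\delta_0,\delta_1]$ is an integral domain, so as to pin down $f$ as constant equal to $d$ on the whole interval $\{d-j-1,\dots,n-d\}$ and then identify a single operator $\delta\in D_d$ which governs $D_t B$ by duality. The inequalities $0\le d-j-1<\ell\le n-d$ are easy: the middle one rewrites $d\le j+\ell$, and $\ell\le n-d$ follows by combining $d\le j+\ell$ with $j+2\ell\le n$; for $d\ge j+1$, let $m$ be the integer from Proposition \ref{three:prop:concave}, observe that $\ell>m$ would force $d=n-\ell+1$ and contradict $j+2\ell\le n$, hence $\ell\le m$ and $f$ is non-decreasing on $\{0,\dots,\ell\}$, giving $d=f(\ell)\ge f(0)=j+1$.

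The crucial step is to show $f(t)=d$ for every $t\in\{d-j-1,\dots,n-d\}$. On $\{0,\dots,\ell-1\}$ the slopes $f(t+1)-f(t)$ are non-negative integers (by monotonicity), non-increasing (by concavity), and sum to $d-j-1<\ell$, so the last one must be $0$. Letting $\ell_0$ be the smallest index with $f(\ell_0)=d$, the $\ell_0$ preceding slopes are each $\ge 1$, forcing $\ell_0\le d-j-1$. By concavity the slopes on $\{\ell,\dots,m-1\}$ are $\le 0$, while being $\ge 0$ by monotonicity up to $m$, so they vanish; hence $f\equiv d$ on $\{\ell_0,\dots,m\}$, and comparing with $f(m)=n-m+1$ yields $m=n-d+1$. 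Thus $f\equiv d$ on $\{\ell_0,\dots,n-d+1\}\supseteq\{d-j-1,\dots,n-d\}$. This bookkeeping is the delicate heart of the argument and is where I expect the main difficulty to lie.

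Since $\dim(D_{n-d}B)=d$, the orthogonal $(D_{n-d}B)^\perp\subseteq D_d$ is one-dimensional and defined over $\bQ$; pick a generator $\delta\in\bQ[\delta_0,\delta_1]_d$. For $t$ in the stated range and any $P\in B$, $\beta\in D_t$, $\gamma\in D_{n-d-t}$, commutativity of $D$ gives $(\gamma\delta)(\beta P)=\delta(\gamma\beta P)=0$ since $\gamma\beta\in D_{n-d}$ and $\delta$ annihilates $D_{n-d}B$. This yields $D_tB\subseteq(D_{n-d-t}\delta)^\perp$. Because $D$ is a domain, multiplication by $\delta$ is injective, so $\dim(D_{n-d-t}\delta)=n-d-t+1$, hence $(D_{n-d-t}\delta)^\perp$ has dimension $d$ and the inclusion becomes an equality. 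The height relation follows from the duality $H((D_{n-d-t}\delta)^\perp)=H(D_{n-d-t}\delta)$ combined with formula \eqref{proofs:eq5}. For the final assertion, commutativity rewrites $D_tP\subseteq(D_{n-d-t}\delta)^\perp$ as $(\gamma\beta)(\delta P)=0$ for all $\gamma\in D_{n-d-t}$, $\beta\in D_t$; since products $\gamma\beta$ span $D_{n-d}$, this amounts to $D_{n-d}(\delta P)=0$, which by non-degeneracy of the pairing $D_{n-d}\times E_{n-d}\to\bR$ is equivalent to $\delta P=0$, independent of $t$.
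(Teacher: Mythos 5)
Your proof is correct and follows essentially the same route as the paper: pin down $f(t)=\dim(D_tB)$ as constant equal to $d$ on $\{d-j-1,\dots,n-d\}$ via the concavity/monotonicity from Proposition \ref{three:prop:concave}, then take $\delta$ as a generator of the one-dimensional orthogonal $(D_{n-d}B)^\perp$ and use commutativity, a dimension count (via $D$ being a domain), and \eqref{proofs:eq5} to get the remaining claims. Your slope-counting account of the concavity bookkeeping is a more explicit version of the paper's slicker observation that $f(d-j)\le f(0)+(d-j)-1$ forces $f$ to be non-strictly-increasing on $\{0,\dots,d-j\}$ and hence, being concave and monotone up to $m$, constant on $\{d-j-1,\dots,m\}$.
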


\begin{proof}
We may assume that $B=\psi_n(A)$ (with $A$ defined over
$\bQ$), and then $f(t)=\dim(D_tB)$ for $t=0,\dots,n+1$.
Let $m$ be as in Corollary
\ref{proofs:cor:lemma:injective}, so that $f$ is
monotonically increasing on $\{0,\dots,m\}$.  Since
$f(\ell) = d \le j+\ell < n-\ell+1$, this corollary gives
$\ell<m$.  We deduce that
\[
 f(0)=j+1 \le f(\ell)=d \le f(m)=n-m+1,
\]
thus $1\le d-j$, while the hypotheses yield $d-j\le\ell\le n-d$.
In turn this gives
\[
 f(d-j) \le f(\ell)=f(0)+(d-j)-1.
\]
So, $f$ is not strictly increasing on $\{0,\dots,d-j\}$.  Being
concave, it is therefore constant on $\{d-j-1,\dots,m\}$, equal
to $f(\ell)=d$. In particular, we obtain $m=n+1-f(m)=n-d+1$,
and so $f(n-d)=d$.  This means that $D_{n-d}B$ has codimension
$1$ in $E_d$.  As it is defined over $\bQ$, we deduce that
$(D_{n-d}B)^\perp=\langle\delta\rangle$
for some non-zero $\delta\in \bQ[\delta_0,\delta_1]_d$.

For each $t=d-j-1,\dots,n-d$,
the subspace $D_tB$ of $E_{n-t}$ has dimension $d$, while
$D_{n-d-t}\delta$ has codimension $d$ in $D_{n-t}$.  Since
their product is
\[
 (D_{n-d-t}\delta)(D_tB)=\delta D_{n-d}B=0,
\]
we deduce that $D_tB=(D_{n-d-t}\delta)^\perp$.  Thus, using
\eqref{proofs:eq5}, we obtain
\[
 H(D_tB)=H(D_{n-d-t}\delta)
   \asymp H(\langle\delta\rangle)^{n-d-t+1}.
\]
Finally, for $P\in E_n$,
the condition $D_tP\subseteq D_tB$ may be rewritten as
$(D_{n-d-t}\delta)(D_t P)=0$, so it is equivalent to
$\delta P \in D_{n-d}^\perp=0$ (inside $E_{n-d}$).
\end{proof}

%
%

\section{Minimal points and properties $\cP(j,\ell)$}
\label{sec:minimal}

From now on, we fix a positive integer $n$ and a real number $\xi$ with
$[\bQ(\xi):\bQ] > n$.   Our goal is to establish an upper bound for
$\hlambda_n(\xi)$ which depends only on $n$.

Since $[\bQ(\xi):\bQ] > n$, non-zero points $\ux,\uy\in\bZ^{n+1}$
which satisfy $L_\xi(\ux)=L_\xi(\uy)<1$ have a non-zero first
coordinate and come by pairs $\uy=\pm \ux$.  Thus, for each large enough
real number $X\ge 1$, there is a unique pair of non-zero points $\pm \ux$
in $\bZ^{n+1}$ with $\norm{\ux}\le X$ for which $L_\xi(\ux)<1$ is minimal.
We choose the one whose first coordinate is positive and, like Davenport
and Schmidt in \cite{DS1969}, we call it the \emph{minimal point
corresponding to $X$}.  This differs slightly from their own definition,
but it plays the same role.

We order these minimal points in a
sequence $(\ux_i)_{i\ge 0}$ by increasing norm.  Then,
\begin{itemize}
 \item their norms $X_i=\norm{\ux_i}$ are positive and strictly increasing,
 \item the quantities $L_i=L_\xi(\ux_i)$ are strictly decreasing,
 \item if $L_\xi(\ux)<L_i$ for some $i\ge 0$ and some non-zero $\ux\in\bZ^{n+1}$,
       then $\norm{\ux}\ge X_{i+1}$.
\end{itemize}
In terms of the associated sequences $(X_i)_{i\ge 0}$ and $(L_i)_{i\ge 0}$,
we have the well-known formulas
\begin{equation}
 \label{minimal:eq:lambda}
  \lambda_n(\xi)
   = \limsup_{i\to\infty} \frac{-\log(L_i)}{\log(X_i)}
  \et
  \hlambda_n(\xi)
   = \liminf_{i\to\infty} \frac{-\log(L_i)}{\log(X_{i+1})}
\end{equation}
which follow from the definition of these exponents given in the
introduction.  In particular, if $\hlambda_n(\xi)>\lambda$ for some
$\lambda\in\bR$, then $L_i=o(X_{i+1}^{-\lambda})$ and a fortiori
\begin{equation}
 \label{minimal:eq:Li}
  L_i\ll X_{i+1}^{-\lambda},
\end{equation}
where from now on all implicit multiplicative constants are independent of $i$.

By construction, each minimal point $\ux_i$ is primitive and so we have
\[
 H(\Span{\ux_i})=X_i.
\]
For subspaces spanned by two consecutive minimal points, a simple adaptation
of the proofs of \cite[Lemma~2]{DS1967} and \cite[Lemma~4.1]{Ro2004}
yields the following estimate.

\begin{lemma}
 \label{minimal:lemma:HA1}
For each $i\ge 0$, we have $H\big(\Span{\ux_i,\ux_{i+1}}\big) =
\norm{\ux_i\wedge\ux_{i+1}}\asymp X_{i+1}L_i$.
\end{lemma}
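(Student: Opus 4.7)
My plan is to separate the claim into two independent parts: first, the equality $H(\Span{\ux_i,\ux_{i+1}})=\norm{\ux_i\wedge\ux_{i+1}}$, which amounts to showing that $\{\ux_i,\ux_{i+1}\}$ is a $\bZ$-basis of the rank-two lattice $\Lambda:=\Span{\ux_i,\ux_{i+1}}\cap\bZ^{n+1}$; and second, the estimate $\norm{\ux_i\wedge\ux_{i+1}}\asymp X_{i+1}L_i$. Combined with formula \eqref{heights:eq:H(V)}, the basis claim will directly yield the asserted equality.

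For the basis claim, I would argue that, since $\ux_i$ is primitive in $\bZ^{n+1}$, it is also primitive in $\Lambda$, so it extends to a $\bZ$-basis $\{\ux_i,\mathbf{w}\}$ of $\Lambda$. Writing $\ux_{i+1}=p\ux_i+q\mathbf{w}$ with $p,q\in\bZ$, $q\neq 0$, and, after replacing $\mathbf{w}$ by $\mathbf{w}+k\ux_i$ for a suitable integer $k$, assuming $|p|\le|q|/2$, the goal becomes to rule out $|q|\ge 2$. In that case $\mathbf{w}=(\ux_{i+1}-p\ux_i)/q\in\bZ^{n+1}$ satisfies
\[
\norm{\mathbf{w}}\le\frac{X_{i+1}}{|q|}+\frac{X_i}{2}
\et
L_\xi(\mathbf{w})\le\frac{L_{i+1}}{|q|}+\frac{L_i}{2}<L_i,
\]
the strict inequality following from $L_{i+1}<L_i$ and $|q|\ge 2$. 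By the defining property of minimal points, $L_\xi(\mathbf{w})<L_i$ forces $\norm{\mathbf{w}}\ge X_{i+1}$; combined with the first estimate, this gives $X_{i+1}(1-1/|q|)\le X_i/2$, hence $X_{i+1}\le X_i$, contradicting $X_i<X_{i+1}$.

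For the second part, the upper bound $\norm{\ux_i\wedge\ux_{i+1}}\ll X_{i+1}L_i$ follows directly from Lemma~\ref{heights:lemma} together with the elementary inequality $X_iL_{i+1}<X_{i+1}L_i$. For the matching lower bound, I plan a Minkowski-type argument inside the two-dimensional subspace $V=\Span{\ux_i,\ux_{i+1}}$. Consider the open symmetric convex body $K=\{\uy\in V:\norm{\uy}<X_{i+1},\,L_\xi(\uy)<L_i\}$. By minimality of $\ux_{i+1}$, any non-zero $\uy\in\bZ^{n+1}$ with $L_\xi(\uy)<L_i$ satisfies $\norm{\uy}\ge X_{i+1}$, so $K\cap\Lambda=\{0\}$. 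Minkowski's theorem in $V$ then yields $\mathrm{vol}_V(K)\le 4\,\mathrm{covol}(\Lambda)=4\norm{\ux_i\wedge\ux_{i+1}}$, reducing the problem to showing $\mathrm{vol}_V(K)\gg X_{i+1}L_i$.

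To estimate $\mathrm{vol}_V(K)$ from below, I would exploit that $V$ lies very close to the line $\bR\Xi_n$: since $\ux_{i+1}\in V$ and $\norm{\ux_{i+1}-x_{i+1,0}\Xi_n}\ll L_{i+1}$, one obtains $\norm{\Xi_n-\pi_V(\Xi_n)}\ll L_{i+1}/X_{i+1}<L_i/X_{i+1}$. Choosing an orthonormal basis $\mathbf{e}_1,\mathbf{e}_2$ of $V$ with $\mathbf{e}_1$ parallel to $\pi_V(\Xi_n)$ and expanding $\norm{\Xi_n\wedge(s\mathbf{e}_1+t\mathbf{e}_2)}^2$ via the Lagrange identity, one finds $L_\xi(s\mathbf{e}_1+t\mathbf{e}_2)^2\asymp\norm{\Xi_n-\pi_V(\Xi_n)}^2 s^2+\norm{\Xi_n}^2 t^2$. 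The bound on $\norm{\Xi_n-\pi_V(\Xi_n)}$ then ensures that the ellipse $L_\xi(\uy)<L_i$ stretches well beyond the disk $\norm{\uy}<X_{i+1}$ in the $\mathbf{e}_1$-direction, so $K$ contains a rectangle with sides of order $X_{i+1}$ and $L_i$, yielding $\mathrm{vol}_V(K)\gg X_{i+1}L_i$ as required. The main obstacle is this final geometric step: quantifying how tightly the plane $V$ hugs $\bR\Xi_n$ and leveraging that closeness to exhibit a sufficiently large rectangle inside $K$.
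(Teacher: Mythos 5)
Your argument is correct, and it is essentially the geometry-of-numbers approach of the two references (Davenport--Schmidt [DS1967, Lemma 2] and Roy [Ro2004, Lemma 4.1]) to which the paper delegates the proof. The final geometric step you flag as the main obstacle does go through: writing $\mathbf v=\Xi_n-\pi_V(\Xi_n)$, the Lagrange-identity expansion gives $L_\xi(s\mathbf e_1+t\mathbf e_2)\asymp\bigl(\norm{\Xi_n}^2t^2+\norm{\mathbf v}^2s^2\bigr)^{1/2}$, and since $\norm{\mathbf v}\ll L_{i+1}/X_{i+1}<L_i/X_{i+1}$ (using $\norm{\ux_{i+1}-x_{i+1,0}\Xi_n}\ll L_{i+1}$ and $|x_{i+1,0}|\asymp X_{i+1}$), the ellipse $L_\xi<L_i$ has $\mathbf e_1$-semi-axis $\gg X_{i+1}$ and $\mathbf e_2$-semi-axis $\asymp L_i$; intersecting with the disk of radius $X_{i+1}$ leaves a rectangle of area $\gg X_{i+1}L_i$, and Minkowski's first theorem (valid because $K\cap\Lambda=\{0\}$ by minimality of $\ux_{i+1}$) yields $\norm{\ux_i\wedge\ux_{i+1}}=\mathrm{covol}(\Lambda)\ge\tfrac14\mathrm{vol}_V(K)\gg X_{i+1}L_i$, completing the lower bound.
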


More generally, we are
interested in the subspaces $\langle\ux_i,\dots,\ux_q\rangle$
of $\bR^{n+1}$ spanned by minimal points with consecutive indices,
as in \cite[\S 3]{NPR2020} (see also \cite{MM2020}).  It is well-known that,
for each $i\ge 0$, we have
\begin{equation}
\label{minimal:sum}
 \langle\ux_i,\ux_{i+1},\dots\rangle
 = \sum_{k=i}^\infty \langle \ux_k\rangle
 = \bR^{n+1}
\end{equation}
because these are subspaces of $\bR^{n+1}$ defined over $\bQ$ which
contain $\lim_{k\to\infty}\norm{\ux_k}^{-1}\ux_k=\norm{\Xi}^{-1}\Xi$
where $\Xi=(1,\xi,\dots,\xi^n)$ has $\bQ$-linearly independent coordinates.
This justifies the following construction.

\begin{definition}
\label{minimal:def:AY}
For each $i\ge 0$ and each $j=0,\dots,n-1$, we set
\[
 \sigma_j(i)=q,
 \quad
 A_j(i)=\langle\ux_i,\dots,\ux_q\rangle
 \et
 Y_j(i)=X_{q+1}
\]
where $q\ge i$ is the largest index for which $\dim\langle\ux_i,\dots,\ux_q\rangle=j+1$.
We also set
\[
 A_n(i)=\bR^{n+1} \et Y_{-1}(i)=X_i.
\]
\end{definition}

So, for $j=0,\dots,n-1$, we have
\[
 \dim(A_j(i))=j+1
 \et
 A_{j+1}(i)=\langle\ux_i,\dots,\ux_q,\ux_{q+1}\rangle
 \quad
 \text{where $q=\sigma_j(i)$.}
\]
For $j=0$, we note that $\sigma_0(i)=i$,  $A_0(i)=\langle\ux_i\rangle$
and $Y_0(i)=X_{i+1}$.  For $j\ge 1$, the following notation is useful.

\begin{definition}
\label{minimal:def:I}
We denote by $I$ the set of indices $i\geq 1$ such that
$\ux_{i-1},\ux_i,\ux_{i+1}$ are linearly independent.  We say
that $i<j$ are \emph{consecutive elements} of $I$ or that $j$
is the \emph{successor} of $i$ in $I$ if $j$ is the smallest
element of $I$ with $j>i$.
\end{definition}

When $n=1$, the set $I$ is empty.  However, when $n>1$, we
may form $q=\sigma_j(i)$ for each $i\ge 1$ and each
$j=1,\dots,n-1$. Since $\Span{\ux_{q-1},\ux_q}\subseteq
A_j(i)$ and $\ux_{q+1} \notin A_j(i)$, we deduce that
$q\in I$.  Thus $I$ is infinite.  Moreover, if $i<j$ are
consecutive elements of $I$, we have
\[
 \Span{\ux_i,\ux_{i+1}}=\dots=\Span{\ux_{j-1},\ux_j}
 \neq \Span{\ux_j,\ux_{j+1}},
\]
Applying Lemma \ref{minimal:lemma:HA1}, we obtain the following useful
estimate.

\begin{lemma}
 \label{minimal:XLXL}
Suppose that $n\ge 2$.  Then $I$ is an infinite set and for each pair $i<j$ of
consecutive elements of $I$, we have $X_jL_{j-1}\asymp X_{i+1}L_i$.
\end{lemma}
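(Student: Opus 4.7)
The plan is to combine two simple observations: (i) between consecutive elements of $I$, the plane spanned by two successive minimal points is constant, and (ii) the height of that plane may be read off from Lemma \ref{minimal:lemma:HA1} using either the first or the last pair.

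First I would note that $I$ is infinite. As pointed out just before the lemma, for each $i\ge 1$ and each $j\in\{1,\dots,n-1\}$, the index $q=\sigma_j(i)$ lies in $I$. Since $\sigma_j(i)\to\infty$ as $i\to\infty$, the set $I$ cannot be finite.

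Next, fix consecutive elements $i<j$ of $I$. For every integer $k$ with $i<k<j$ we have $k\notin I$, so $\ux_{k-1},\ux_k,\ux_{k+1}$ are linearly dependent. Since any two consecutive minimal points are linearly independent (they are primitive with distinct norms, hence cannot be proportional), the planes $\langle\ux_{k-1},\ux_k\rangle$ and $\langle\ux_k,\ux_{k+1}\rangle$ are both two-dimensional and share the line $\langle\ux_k\rangle$; the dependence forces them to coincide. Iterating from $k=i+1$ up to $k=j-1$ gives
\[
 V:=\langle\ux_i,\ux_{i+1}\rangle=\langle\ux_{i+1},\ux_{i+2}\rangle=\cdots=\langle\ux_{j-1},\ux_j\rangle.
\]

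Finally, applying Lemma \ref{minimal:lemma:HA1} to the first and last of these pairs yields
\[
 X_{i+1}L_i\asymp H(V)\asymp X_jL_{j-1},
\]
which gives $X_jL_{j-1}\asymp X_{i+1}L_i$, as required. The only mildly subtle point is the stability of the plane along the stretch between consecutive elements of $I$, but this is a direct consequence of the definition of $I$ together with the primitivity of minimal points; the estimate itself is just Lemma \ref{minimal:lemma:HA1} invoked twice.
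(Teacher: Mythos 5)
Your proof is correct and follows essentially the same route as the paper: show $I$ is infinite via $\sigma_j(i)\in I$, observe that the two-dimensional plane $\langle\ux_k,\ux_{k+1}\rangle$ is constant for $i\le k<j$ between consecutive elements of $I$, and then apply Lemma \ref{minimal:lemma:HA1} at both ends of the run. You merely spell out the plane-stability step a bit more explicitly than the paper, which states it without further comment.
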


The above also shows that, for each integer $i\ge 0$, we have
$\sigma_1(i)=j$ where $j$ is the smallest element of $I$ with $j>i$, so
$A_1(i)=A_1(j-1)$ and $A_2(i)=A_2(j-1)=\Span{\ux_{j-1},\ux_j,\ux_{j+1}}$.

In the next sections, we will provide upper bound estimates for the height of
the subspaces $\UU^\ell(A_j(i))$ when the following condition is fulfilled.

\begin{definition}
\label{minimal:def:Pjl}
Let $j,\ell\in\{0,\dots,n\}$.  We say that property $\cP(j,\ell)$ holds
if, for each sufficiently large integer $i\ge 0$ and each $m=0,\dots,j$, we have
$\dim\UU^\ell(A_m(i))\ge m+\ell+1$.
\end{definition}

Of course, this depends on our fixed choice of $\xi$ and $n$.  Clearly
$\cP(n,0)$ holds, because $\UU^0(A_m(i))=A_m(i)$
has dimension $m+1$ for each $i\ge 0$ and each $m=0,\dots,n$.
Moreover $\cP(j,\ell)$ implies $\cP(j-1,\ell)$ if $j>0$.  The next result
provides a further crucial implication.

\begin{prop}
 \label{minimal:prop:Pjl}
Suppose that property $\cP(j,\ell)$ holds for some $j,\ell\in\{0,\dots,n\}$.
Then we have $j+2\ell\le n$.  If moreover $\ell>0$, then $\cP(j+1,\ell-1)$
holds as well.
\end{prop}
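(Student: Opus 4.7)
The plan is to derive both assertions directly from Corollary \ref{three:cor}(i), combined with the trivial upper bound $\dim \UU^\ell(A) \le n-\ell+1$ coming from the containment $\UU^\ell(A)\subseteq \bR^{n-\ell+1}$.

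For the first assertion, I would simply take $i$ large enough so that $\cP(j,\ell)$ yields $\dim \UU^\ell(A_j(i)) \ge j+\ell+1$, and combine this with $\UU^\ell(A_j(i)) \subseteq \bR^{n-\ell+1}$ to obtain $j+\ell+1 \le n-\ell+1$, i.e.\ $j+2\ell\le n$.

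For the second assertion, assume $\ell\ge 1$ and fix $i$ large enough that $\cP(j,\ell)$ applies. The goal is to show that $\dim \UU^{\ell-1}(A_m(i))\ge m+\ell$ for every $m\in\{0,\dots,j+1\}$. The key tool is Corollary \ref{three:cor}(i), which gives
\[
 \min\bigl\{\dim \UU^\ell(A_m(i)),\ m+\ell\bigr\} \le \dim \UU^{\ell-1}(A_m(i))
\]
since $\dim A_m(i)=m+1$. For $m\in\{0,\dots,j\}$, property $\cP(j,\ell)$ gives $\dim \UU^\ell(A_m(i))\ge m+\ell+1 > m+\ell$, so the minimum equals $m+\ell$ and the desired bound follows.

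The only subtlety is the case $m=j+1$, where $\cP(j,\ell)$ says nothing directly about $A_{j+1}(i)$. Here I would exploit the inclusion $A_j(i)\subseteq A_{j+1}(i)$, which gives $\UU^\ell(A_j(i))\subseteq \UU^\ell(A_{j+1}(i))$, and hence
\[
 \dim \UU^\ell(A_{j+1}(i)) \ge \dim \UU^\ell(A_j(i)) \ge j+\ell+1
\]
by $\cP(j,\ell)$. Applying Corollary \ref{three:cor}(i) to $A=A_{j+1}(i)$ (whose dimension is $j+2$) then yields
\[
 \min\bigl\{\dim \UU^\ell(A_{j+1}(i)),\ j+\ell+1\bigr\} \le \dim \UU^{\ell-1}(A_{j+1}(i)),
\]
and since both quantities inside the minimum are at least $j+\ell+1$, we conclude $\dim \UU^{\ell-1}(A_{j+1}(i)) \ge j+\ell+1 = (j+1)+(\ell-1)+1$. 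Combining all cases, $\cP(j+1,\ell-1)$ holds, and no case presents a real obstacle: the whole argument is essentially a direct application of the concavity packaged into Corollary \ref{three:cor}(i), together with the monotonicity $\UU^\ell(A_j(i))\subseteq \UU^\ell(A_{j+1}(i))$.
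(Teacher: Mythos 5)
Your proof is correct and follows essentially the same route as the paper's: both assertions are read off from Corollary \ref{three:cor}(i) together with the trivial bound $\dim\UU^\ell(A)\le n-\ell+1$, and the $m=j+1$ case is handled by lowering to $A_j(i)\subseteq A_{j+1}(i)$. The paper phrases that last step by first stating the corollary with an auxiliary subspace $B\subseteq A$ and then choosing $B=A_j(i)$, but this is the same observation you make via $\dim\UU^\ell(A_{j+1}(i))\ge\dim\UU^\ell(A_j(i))$.
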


\begin{proof}
By hypothesis, $\UU^\ell(A_j(i))$ is a subspace of $\bR^{n-\ell+1}$ of dimension
at least $j+\ell+1$ for each large enough $i$.  Comparing dimensions yields
$j+2\ell\le n$.  Now, suppose that $\ell>0$, and set $A=A_m(i)$ for a choice
of integers $m\in\{0,\dots,j+1\}$ and $i\ge 0$.  By Corollary \ref{three:cor}(i),
we have
\[
 \min\{\dim\UU^\ell(B), m+\ell\} \le \dim\UU^{\ell-1}(A)
\]
for any subspace $B$ of $A$.  If $m\le j$, we choose $B=A_m(i)$.  Otherwise,
we choose $B=A_{m-1}(i)$.  Then, assuming $i$ large enough, we have
$\dim\UU^\ell(B) \ge \dim(B)+\ell \ge m+\ell$ because of $\cP(j,\ell)$,
and so $\dim\UU^{\ell-1}(A) \ge m+\ell$. Thus $\cP(j+1,\ell-1)$ holds.
\end{proof}

%
%

\section{Property $\cP(0,\ell)$}
\label{sec:P0}

We keep the notation of the preceding section, and fix a real
number $\lambda$ with $0<\lambda<\hlambda_n(\xi)$, thus
$\lambda<1$.  In this section, we derive useful consequences
of the assumption that, for some
$j\ge 0$ and $\ell\ge 1$, we have $\cP(j,\ell-1)$ but not
$\cP(j,\ell)$.  As an example of application, we recover an
important result of Badziahin and Schleischitz from \cite{BS2021}
which yields $\cP(0,\ell)$ under a simple condition on $\ell$.
We recall our convention that all implicit
multiplicative constants are independent of $i$, and we start with
two general lemmas.

\begin{lemma}
\label{P0:lemma:A}
Let $0\le j,\ell\le n$ be integers.  Suppose that
\[
 d:=\liminf_{i\to\infty} \dim\UU^\ell(A_j(i)) \le n-\ell.
\]
Then there are arbitrarily large integers $i\ge 1$ for which
\begin{equation}
 \label{P0:lemma:A:eq}
  \dim\UU^\ell(A_j(i))=d \et \cU^\ell(\ux_{i-1})\not\subseteq \UU^\ell(A_j(i)).
\end{equation}
\end{lemma}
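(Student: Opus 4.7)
The plan is a proof by contradiction. Suppose only finitely many $i\ge 1$ satisfy \eqref{P0:lemma:A:eq}. Then there exists $i_0\ge 1$ such that for every $i\ge i_0$ we have $\dim\UU^\ell(A_j(i))\ge d$, and such that whenever $i\ge i_0$ and $\dim\UU^\ell(A_j(i))=d$, we also have $\UU^\ell(\ux_{i-1})\subseteq \UU^\ell(A_j(i))$. The goal is to force $d=n-\ell+1$, contradicting the hypothesis $d\le n-\ell$.

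The first step is the structural observation $A_j(i-1)\subseteq \langle\ux_{i-1}\rangle+A_j(i)$ for every $i\ge 1$. This reduces to the monotonicity $\sigma_j(i-1)\le \sigma_j(i)$, which follows from the fact that the $(j+2)$-dimensional space $\langle\ux_i,\dots,\ux_{\sigma_j(i)+1}\rangle$ is already contained in $\langle\ux_{i-1},\ux_i,\dots,\ux_{\sigma_j(i)+1}\rangle$, preventing $\sigma_j(i-1)$ from reaching $\sigma_j(i)+1$.

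The heart of the argument is a backward induction using the additivity of $\UU^\ell$ on sums of subspaces (which is immediate from the definition $\UU^\ell(A)=\sum_{\ux\in A}\UU^\ell(\ux)$). For any $i\ge i_0+1$ with $\dim\UU^\ell(A_j(i))=d$, applying $\UU^\ell$ to the inclusion above together with the standing assumption $\UU^\ell(\ux_{i-1})\subseteq\UU^\ell(A_j(i))$ yields
\[
 \UU^\ell(A_j(i-1))\subseteq \UU^\ell(\ux_{i-1})+\UU^\ell(A_j(i)) = \UU^\ell(A_j(i)),
\]
and combined with $\dim\UU^\ell(A_j(i-1))\ge d$ this forces equality of the two subspaces and $\dim\UU^\ell(A_j(i-1))=d$. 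Since the liminf hypothesis yields arbitrarily large indices $i$ with $\dim\UU^\ell(A_j(i))=d$, iterating the step produces a common subspace $W:=\UU^\ell(A_j(i_0))=\UU^\ell(A_j(i))$ of dimension $d$ for every $i\ge i_0$.

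The contradiction then comes from the density relation \eqref{minimal:sum}. Since $\ux_i\in A_j(i)$ gives $\UU^\ell(\ux_i)\subseteq W$ for every $i\ge i_0$, additivity yields
\[
 \bR^{n-\ell+1} = \UU^\ell(\bR^{n+1}) = \UU^\ell\Big(\sum_{i\ge i_0}\langle\ux_i\rangle\Big) = \sum_{i\ge i_0}\UU^\ell(\ux_i) \subseteq W,
\]
so $\dim W = n-\ell+1$, contradicting $d\le n-\ell$. The delicate point is setting up the backward induction cleanly, in particular verifying that the inclusion $A_j(i-1)\subseteq\langle\ux_{i-1}\rangle+A_j(i)$ holds at every step; the rest is linear-algebra bookkeeping together with the defining property of the liminf.
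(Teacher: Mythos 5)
Your proof is correct, and it rests on the same two facts that drive the paper's argument: the density relation \eqref{minimal:sum} (which yields $\sum_{i\ge p}\UU^\ell(A_j(i))=\UU^\ell(\bR^{n+1})=\bR^{n-\ell+1}$) and the structural inclusion $A_j(i-1)\subseteq\langle\ux_{i-1}\rangle+A_j(i)$, equivalently $\sigma_j(i-1)\le\sigma_j(i)$. The paper proceeds directly: fix $p$ in the liminf set $E$ large enough that $\dim\UU^\ell(A_j(i))\ge d$ for $i\ge p$, take the least $q\in E$ with $\sum_{i=p}^{q}\UU^\ell(A_j(i))=\bR^{n-\ell+1}$, and check that $q$ is a witness. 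You take the contrapositive: if there is no witness beyond some $i_0$, your backward induction forces $\UU^\ell(A_j(i))$ to stabilize at a single proper subspace for all $i\ge i_0$, which contradicts the density relation. Both routes need the implication ``$\UU^\ell(\ux_{i-1})\subseteq\UU^\ell(A_j(i))$ forces $\UU^\ell(A_j(i-1))\subseteq\UU^\ell(A_j(i))$''; the paper invokes this tacitly in its final sentence (when passing from $V_{q-1}\not\subseteq V_q$ to $\UU^\ell(\ux_{q-1})\not\subseteq V_q$), whereas you isolate the underlying inclusion $A_j(i-1)\subseteq\langle\ux_{i-1}\rangle+A_j(i)$ and prove it via the monotonicity of $\sigma_j$, which is a worthwhile clarification. (The only pedantic caveat: the proof via $\sigma_j$ requires $j\le n-1$, but when $j=n$ the hypothesis $d\le n-\ell$ is vacuous since $\UU^\ell(A_n(i))=\bR^{n-\ell+1}$.)
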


\begin{proof}
Set $V_i=\UU^\ell(A_j(i))$ for each $i\ge 0$, and let $E$ denote the infinite
set of integers $i\ge 0$ for which $\dim(V_i)=d$.  Choose $p\in E$ large enough so that
$\dim(V_i)\ge d$ for each $i\ge p$.   Using \eqref{minimal:sum},
we find
\[
 \sum_{i=p}^\infty V_i
  = \cU^{\ell}(\Span{\ux_p,\ux_{p+1},\dots})
  = \cU^{\ell}(\bR^{n+1})
  = \bR^{n-\ell+1}.
\]
Thus, there exists a smallest $q\in E$ with $q\ge p$ such that
$\sum_{i=p}^q V_i = \bR^{n-\ell+1}$.   Since $V_p$ has dimension
$d\le n-\ell$, it is a proper subspace of $\bR^{n-\ell+1}$.  So, we
must have $q>p$, and thus $V_{q-1}\neq V_{q}$ by the choice of $q$.
As $\dim(V_{q-1})\ge d=\dim(V_{q})$, this means that
$V_{q-1}\not\subset V_{q}$ and therefore
\eqref{P0:lemma:A:eq} holds for $i=q>p$.
\end{proof}

For any integer $m\ge 1$, any subspace of $\bR^m$ defined over
$\bQ$ has height at least $1$.  The next lemma provides an instance
where this lower bound can be improved.

\begin{lemma}
\label{P0:lemma:S}
Let $\ell\in\{0,\dots,n\}$ and let $V$ be a subspace of $\bR^{n+1-\ell}$
defined over $\bQ$.  Suppose that $\cU^\ell(\ux_i)\subseteq V$ and
that $\cU^\ell(\ux_{i-1})\not\subseteq V$ for some $i\ge 1$.  Then,
$1\ll H(V)L_{i-1}$.
\end{lemma}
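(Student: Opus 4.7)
The idea is to produce a single vector $\uy\in V^\perp\cap\bZ^{n+1-\ell}$ with $\|\uy\|\ll H(V)$ and an index $k_0\in\{0,\dots,\ell\}$ such that $\uy\cdot\ux_{i-1}^{(k_0,\ell)}$ is a non-zero integer, and then to bound it from above by $C\|\uy\|L_{i-1}$. To produce $\uy$ and $k_0$, I will use that $\cU^\ell(\ux_{i-1})\not\subseteq V$ forces some truncation $\ux_{i-1}^{(k_0,\ell)}$ to lie outside $V$, and thus some basis element of $V^\perp\cap\bZ^{n+1-\ell}$ to be non-orthogonal to it. Selecting this $\uy$ from a Minkowski-reduced basis of the integer sublattice $V^\perp\cap\bZ^{n+1-\ell}$, whose successive minima are all $\geq 1$ with product $\asymp H(V^\perp)=H(V)$, I may assume $\|\uy\|\ll H(V)$ with a constant depending only on $n$.

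For the upper bound, I will write each truncation in the form
\[
\ux_m^{(k,\ell)} = x_{m,0}\,\xi^k\,\Xi_{n-\ell} + \varepsilon_{m,k},\qquad \|\varepsilon_{m,k}\|\ll L_m,
\]
which follows from the defining inequality $|x_{m,j}-x_{m,0}\xi^j|\leq L_m$. Since $\ux_i^{(0,\ell)}\in V$ and $\uy\in V^\perp$, taking $m=i$, $k=0$ will give $x_{i,0}(\uy\cdot\Xi_{n-\ell})=-\uy\cdot\varepsilon_{i,0}$, whence $|\uy\cdot\Xi_{n-\ell}|\ll\|\uy\|L_i/x_{i,0}$. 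Substituting into the expansion of $\ux_{i-1}^{(k_0,\ell)}$ then gives
\[
|\uy\cdot\ux_{i-1}^{(k_0,\ell)}|\ll\frac{|x_{i-1,0}|}{x_{i,0}}\,\|\uy\|\,L_i + \|\uy\|\,L_{i-1}.
\]

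The final step will invoke the fact that every minimal point satisfies $x_{i,0}\asymp X_i$ (because $L_i<1$ forces $|x_{i,j}|\leq x_{i,0}|\xi|^j+1\ll x_{i,0}$), so that $|x_{i-1,0}|/x_{i,0}\leq X_{i-1}/X_i\leq 1$. Combined with $L_i\leq L_{i-1}$ and $\|\uy\|\ll H(V)$, this yields $|\uy\cdot\ux_{i-1}^{(k_0,\ell)}|\ll H(V)\,L_{i-1}$, and the lower bound $|\uy\cdot\ux_{i-1}^{(k_0,\ell)}|\geq 1$ (non-zero integer) delivers the claim. The main technical obstacle is avoiding a spurious term of size $\|\uy\|\,X_{i-1}L_i$, which would not in general be absorbed by $\|\uy\|\,L_{i-1}$; eliminating it relies crucially on both the orthogonality $\uy\cdot\ux_i^{(0,\ell)}=0$ (to replace $\uy\cdot\Xi_{n-\ell}$ by a quantity of size $\|\uy\|L_i/x_{i,0}$) and the relation $x_{i,0}\asymp X_i$ (to turn the ratio $|x_{i-1,0}|/x_{i,0}$ into a bounded quantity).
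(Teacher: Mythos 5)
Your proof is correct, but it takes a genuinely different route from the paper's. The paper works on the ``primal'' side: it chooses $m$ with $\ux_{i-1}^{(m,\ell)}\notin V$, forms the plane $U=\langle \ux_{i-1}^{(m,\ell)},\ux_i^{(m,\ell)}\rangle$, notes $U\cap V=\langle\ux_i^{(m,\ell)}\rangle$, and applies Schmidt's inequality \eqref{heights:eq:schmidt} together with the bounds $H(U)\ll g^{-1}X_iL_{i-1}$ (from Lemma \ref{heights:lemma}) and $H(U\cap V)\asymp g^{-1}X_i$ to deduce $1\le H(U+V)\ll H(V)L_{i-1}$ in two lines. You instead go to the dual side: you extract an integer vector $\uy\in V^\perp$ with $\|\uy\|\ll H(V)$ (via Minkowski's second theorem and a reduced basis, using $\lambda_1\ge 1$ since $V^\perp\cap\bZ^{n+1-\ell}\subseteq\bZ^{n+1-\ell}$), locate a truncation $\ux_{i-1}^{(k_0,\ell)}$ not orthogonal to $\uy$, and bound the resulting non-zero integer $\uy\cdot\ux_{i-1}^{(k_0,\ell)}$ by exploiting the decomposition $\ux_m^{(k,\ell)}=x_{m,0}\xi^k\Xi_{n-\ell}+\varepsilon_{m,k}$ and the orthogonality $\uy\perp\ux_i^{(0,\ell)}$ to control $|\uy\cdot\Xi_{n-\ell}|$. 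Both proofs are sound; yours trades Schmidt's height inequality for the geometry-of-numbers input on reduced bases, and requires the extra (standard) observation $x_{i,0}\asymp X_i$. The cancellation you identify as the crux --- using $\uy\perp\ux_i^{(0,\ell)}$ to replace a term of size $\|\uy\|X_{i-1}$ by one of size $(X_{i-1}/X_i)\|\uy\|L_i\le\|\uy\|L_{i-1}$ --- is exactly what Schmidt's inequality is packaging implicitly in the paper's shorter argument. One small simplification available to you: rather than a Minkowski-reduced basis, it suffices to take integer vectors $\uv_1,\dots,\uv_s\in V^\perp\cap\bZ^{n+1-\ell}$ realizing the successive minima; they are linearly independent, span $V^\perp$ over $\bR$ (since $V^\perp$ is defined over $\bQ$), and satisfy $\|\uv_j\|\le\lambda_s\ll H(V)$, which is all you use.
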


\begin{proof}
Choose $m\in\{0,\dots,\ell\}$ such that $\uy:=\ux_{i-1}^{(m,\ell)}
\notin V$ and set $U=\langle \uy,\uz\rangle$ where
$\uz:=\ux_{i}^{(m,\ell)}$.  Then, we have $U\cap V=\langle \uz\rangle$
and so $H(U\cap V)=g^{-1}\|\uz\|\asymp g^{-1}X_i$ where $g$
denotes the gcd of the coordinates of $\uz$.  Since $\uy$ and $g^{-1}\uz$
are integer points of $U$, we also find
\[
  H(U)
    \le \|\uy\wedge g^{-1}\uz\|
    =g^{-1}\|\uy\wedge\uz\|
    \ll g^{-1}X_iL_{i-1},
\]
thus $1\le H(U+V) \ll H(U\cap V)^{-1}H(U)H(V)\ll H(V)L_{i-1}$.
\end{proof}

We now come to the main result of this section.

\begin{prop}
\label{P0:prop:I}
Let $j\ge 0$ and $\ell\ge 1$ be integers with $j+2\ell\le n$.  Suppose that
$\cP(j,\ell-1)$ holds but not $\cP(j,\ell)$.  Then, there are arbitrarily large
values of $i\ge 1$ for which
\begin{equation}
\label{P0:prop:I:eq1}
  \dim\UU^\ell(A_j(i)) = \ell+j  \et
  \cU^\ell(\ux_{i-1}) \not\subseteq \UU^\ell(A_j(i)) \varsubsetneq \bR^{n-\ell+1}.
\end{equation}
For those $i$, we further have
\begin{equation}
\label{P0:prop:I:eq2}
  1\ll H\big(\UU^{\ell-1}(A_j(i))\big) L_{i-1}^{n-j-2\ell+2}.
\end{equation}
\end{prop}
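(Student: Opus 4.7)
My plan is to pin down the dimension $\dim\UU^{\ell}(A_j(i))$ along a suitable subsequence, invoke Lemma~\ref{P0:lemma:A} to force the non-containment of $\UU^{\ell}(\ux_{i-1})$, and then convert the resulting geometric information into the height lower bound \eqref{P0:prop:I:eq2} via Proposition~\ref{three:prop:height} combined with Lemma~\ref{P0:lemma:S}.

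First I would show that $\dim\UU^{\ell}(A_j(i))=j+\ell$ for infinitely many~$i$. For the lower bound, $\cP(j,\ell-1)$ gives $\dim\UU^{\ell-1}(A_j(i))\ge j+\ell$ for all large $i$, and Corollary~\ref{three:cor}(ii) combined with $j+\ell\le n-\ell+1$ (which follows from $j+2\ell\le n$) upgrades this to $\dim\UU^{\ell}(A_j(i))\ge j+\ell$ for all large~$i$. For the upper bound, the failure of $\cP(j,\ell)$ provides, along a subsequence of arbitrarily large $i$, some index $m\le j$ with $\dim\UU^{\ell}(A_m(i))\le m+\ell$; taking $j$ minimal among indices where $\cP(\cdot,\ell)$ breaks, so that the breakdown occurs precisely at $m=j$, I get $\dim\UU^{\ell}(A_j(i))\le j+\ell$ on that subsequence. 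Hence $\liminf_i\dim\UU^{\ell}(A_j(i))=j+\ell\le n-\ell$, and Lemma~\ref{P0:lemma:A} yields arbitrarily large $i$ at which both $\dim\UU^{\ell}(A_j(i))=j+\ell$ and $\UU^{\ell}(\ux_{i-1})\not\subseteq\UU^{\ell}(A_j(i))$. The proper inclusion $\UU^{\ell}(A_j(i))\varsubsetneq\bR^{n-\ell+1}$ in \eqref{P0:prop:I:eq1} is then automatic from $j+\ell<n-\ell+1$.

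To obtain \eqref{P0:prop:I:eq2}, I would feed $A=A_j(i)$ into Proposition~\ref{three:prop:height} with $d=j+\ell$ (the hypothesis $d\le j+\ell$ is met by the equality just established, and $j+2\ell\le n$ is given). Writing $V=\UU^{n-d}(A_j(i))$, the proposition delivers $H\bigl(\UU^{\ell-1}(A_j(i))\bigr)\asymp H(V)^{n-d-(\ell-1)+1}=H(V)^{n-j-2\ell+2}$, and its ``independence of $t$'' clause (valid for $t=\ell$ and $t=n-d$, both lying in $[\ell-1,n-j-\ell]$) transports the non-containment $\UU^{\ell}(\ux_{i-1})\not\subseteq\UU^{\ell}(A_j(i))$ into $\UU^{n-d}(\ux_{i-1})\not\subseteq V$. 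Since $\ux_i\in A_j(i)$ trivially gives $\UU^{n-d}(\ux_i)\subseteq V$, Lemma~\ref{P0:lemma:S} applied at truncation level $n-d=n-j-\ell$ in place of $\ell$ produces $1\ll H(V)L_{i-1}$. Raising to the power $n-j-2\ell+2$ and combining with the height formula yields
\[
 1\ll H(V)^{n-j-2\ell+2}L_{i-1}^{n-j-2\ell+2}
 \asymp H\bigl(\UU^{\ell-1}(A_j(i))\bigr)\,L_{i-1}^{n-j-2\ell+2},
\]
which is \eqref{P0:prop:I:eq2}.

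The main obstacle is the dimension identity $\dim\UU^{\ell}(A_j(i))=j+\ell$: the lower bound is routine from the concavity material of Section~\ref{sec:three}, but extracting the upper bound from the failure of $\cP(j,\ell)$ requires localizing the breakdown at the specific index $j$ rather than at some smaller one. Once this is handled, the height estimate \eqref{P0:prop:I:eq2} follows from a careful pairing of Proposition~\ref{three:prop:height} with Lemma~\ref{P0:lemma:S}, executed at the dual truncation level $n-d$ rather than at~$\ell$.
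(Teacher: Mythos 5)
Your approach is essentially the paper's, using the same tools in the same order: pin the $\liminf$ of $\dim\UU^\ell(A_j(i))$ at $j+\ell$ (lower bound from $\cP(j,\ell-1)$ and Corollary~\ref{three:cor}(ii), upper bound from the failure of $\cP(j,\ell)$), invoke Lemma~\ref{P0:lemma:A}, then convert to the height bound by feeding $A=A_j(i)$, $d=j+\ell$, $V=\UU^{n-j-\ell}(A_j(i))$ into Proposition~\ref{three:prop:height} and applying Lemma~\ref{P0:lemma:S} at the dual truncation level $n-j-\ell$. The use of the ``independence of $t$'' clause to transport the non-containment to level $n-d$, and the final exponent $n-j-2\ell+2$, are correct and match the paper's argument.

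The one questionable step is ``taking $j$ minimal among indices where $\cP(\cdot,\ell)$ breaks''. This is not a legitimate without-loss-of-generality reduction: the conclusion \eqref{P0:prop:I:eq1} is about $A_j(i)$ for the given $j$, and, as you essentially observe yourself, establishing $\dim\UU^\ell(A_{j'}(i))=j'+\ell$ at some smaller level $j'<j$ does not formally propagate to $\dim\UU^\ell(A_j(i))\le j+\ell$. In fairness, the paper's own proof is terse at exactly this point, asserting $\liminf_i\dim\UU^\ell(A_j(i))=j+\ell$ directly from the failure of $\cP(j,\ell)$; the intended reading is that the failure localizes at level $j$ itself, which is guaranteed whenever $\cP(j-1,\ell)$ also holds (or $j=0$), and that extra hypothesis is available in every downstream application. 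The clean fix is therefore not to change $j$ silently, but to note that the proposition is always invoked with $j$ already the minimal level at which $\cP(\cdot,\ell)$ fails.
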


\begin{proof}
Since $\cP(j,\ell-1)$ holds, Corollary \ref{three:cor}(ii)
gives
\[
 \dim\UU^\ell(A_j(i)) \ge  \min\{ \ell+j,\, n-\ell+1 \} = \ell+j
\]
for each sufficiently large $i$.  Since $\cP(j,\ell)$ does not hold,
we conclude that
\[
   \liminf_{i\to\infty} \dim\UU^\ell(A_j(i)) =\ell+j \le n-\ell.
\]
Then, Lemma \ref{P0:lemma:A} provides infinitely
many $i$ for which \eqref{P0:prop:I:eq1} holds.
For such $i$, Proposition \ref{three:prop:height} applies
with $A=A_j(i)$, $d=\ell+j$ and $t\in\{\ell-1,\ell\}$.  Setting
$V=\cU^{n-\ell-j}(A)$, it gives
\[
  H(\cU^{\ell-1}(A)) \asymp H(V)^{n-j-2\ell+2}
  \et
  \cU^{n-\ell-j}(\ux_{i-1})\not\subseteq V.
\]
Since $\cU^{n-\ell-j}(\ux_i)\subseteq V$, Lemma \ref{P0:lemma:S}
yields $1\ll H(V)L_{i-1}$. Then \eqref{P0:prop:I:eq2} follows.
\end{proof}

The following restatement of \cite[Lemma 3.1]{BS2021} is
central to the present paper.

\begin{prop}[Badziahin-Schleischitz, 2021]
\label{P0:prop:BS}
Suppose that $\hlambda_n(\xi)>1/(n-\ell+1)$ for some integer $\ell$ with
$0\le \ell\le n/2$.  Then $\cP(0,\ell)$ holds.  More precisely, we have
$\dim\cU^\ell(\ux_i)=\ell+1$ for each sufficiently large $i$ .
\end{prop}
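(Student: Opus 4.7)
\emph{Proof plan.} I would argue by induction on $\ell$. The base case $\ell=0$ is immediate since $\UU^0(\ux_i)=\langle\ux_i\rangle$ has dimension $1=\ell+1$ for every $i$; equivalently, $\cP(0,0)$ holds. For the inductive step, I fix $\ell$ with $1\le\ell\le n/2$; as $\hlambda_n(\xi)>1/(n-\ell+1)>1/(n-\ell+2)$, the induction hypothesis supplies $\cP(0,\ell-1)$.

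Suppose for contradiction that $\cP(0,\ell)$ fails. Since $j+2\ell=2\ell\le n$, Proposition \ref{P0:prop:I} applies with $j=0$ and yields arbitrarily large indices $i\ge 1$ for which
\begin{align*}
    \dim\UU^\ell(\ux_i)=\ell
    \et
    1\ll H\bigl(\UU^{\ell-1}(\ux_i)\bigr)\,L_{i-1}^{n-2\ell+2}.
\end{align*}
For any such $i$, Proposition \ref{three:prop:height}, applied to $A=\langle\ux_i\rangle$ with $j=0$ and $d=\ell$, shows that $\dim\UU^{\ell-1}(\ux_i)=\ell$: the index $t=\ell-1$ indeed lies in the valid range $\{d-1,\dots,n-d\}=\{\ell-1,\dots,n-\ell\}$ precisely because $\ell\le n/2$. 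The crude height estimate \eqref{three:eq:HUx} then gives $H(\UU^{\ell-1}(\ux_i))\ll X_i L_i^{\ell-1}$.

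Combining these bounds and using $L_i\le L_{i-1}$ (valid since $\ell-1\ge 0$) collapses the two exponents of $L_{i-1}$ into a single one:
\begin{align*}
    1\ll X_i L_{i-1}^{(\ell-1)+(n-2\ell+2)}=X_i L_{i-1}^{n-\ell+1}.
\end{align*}
I then pick $\lambda$ with $1/(n-\ell+1)<\lambda<\hlambda_n(\xi)$, so that \eqref{minimal:eq:Li} yields $L_{i-1}\ll X_i^{-\lambda}$, which turns the previous estimate into $1\ll X_i^{1-\lambda(n-\ell+1)}$. Since $1-\lambda(n-\ell+1)<0$, the norms $X_i$ must stay bounded, contradicting the existence of arbitrarily large such $i$. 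Hence $\cP(0,\ell)$ holds, which is the desired statement.

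The chief obstacle is one of book-keeping rather than depth: one must recognize that the exponent $\ell-1$ arising from the generic height bound \eqref{three:eq:HUx} and the exponent $n-2\ell+2$ produced by Proposition \ref{P0:prop:I} add up to exactly $n-\ell+1$, which is precisely the threshold built into the hypothesis $\hlambda_n(\xi)>1/(n-\ell+1)$. Without this fortuitous match---and without the fact that Proposition \ref{three:prop:height} forces the dimension of $\UU^{\ell-1}(\ux_i)$ to drop down to $\ell$ rather than to the generic value $\ell+1$---the final strict inequality in the exponent would be lost.
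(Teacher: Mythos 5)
Your argument is correct and follows essentially the same route as the paper: apply Proposition \ref{P0:prop:I} with $j=0$, bound the resulting height via \eqref{three:eq:HUx}, and derive a contradiction from the growth of the $X_i$ (the paper chooses the maximal $m$ with $\cP(0,m)$ holding rather than inducting on $\ell$, but the arithmetic is the same once $m=\ell-1$). Your detour through Proposition \ref{three:prop:height} is superfluous, since $\dim\cU^{\ell-1}(\ux_i)=\ell$ already follows from $\cP(0,\ell-1)$ together with the trivial bound $\dim\cU^{\ell-1}(\ux_i)\le\ell$.
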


The proof given in \cite{BS2021} is based on a method of Laurent from \cite{La2003}.
To illustrate Proposition \ref{P0:prop:I}, we give the following alternative
argument.

\begin{proof}
Since $\cP(0,0)$ holds, there is a largest integer $m\ge 0$ for which
$\cP(0,m)$ holds.  Suppose that $m<\ell$.  Since $\cP(0,m+1)$ does not hold
and $2(m+1)\le n$, Proposition \ref{P0:prop:I} shows the existence of
arbitrarily large values of $i$ for which
\[
  1 \ll H(\cU^m(\ux_i))L_{i-1}^{n-2m}.
\]
As \eqref{three:eq:HUx} gives $H(\cU^m(\ux_i))\ll X_iL_i^m$,
this yields $1\ll X_iL_{i-1}^{n-m}$.  Then, by the formulas
\eqref{minimal:eq:lambda}, we deduce that $\hlambda_n(\xi)
\le 1/(n-m)\le 1/(n-\ell+1)$.  This contradiction shows that
$\cP(0,\ell)$ holds.  The second assertion of the lemma follows since
$\dim\cU^\ell(\ux_i) \le \ell+1$ for each $i\ge 0$.
\end{proof}

If $\cP(0,\ell)$ holds for some $\ell\ge 1$, then $\dim \UU^\ell(\ux_i)
=\ell+1$ for each large enough $i$ and, for these $i$, we obtain
$1\le H(\UU^\ell(\ux_i))\ll X_iL_i^\ell \ll X_iX_{i+1}^{-\ell\lambda}$
by \eqref{three:eq:HUx} and our choice of $\lambda$.
This implies that $\lambda\le 1/\ell$ and so
$\hlambda_n(\xi)\le 1/\ell$.   Combined with Proposition \ref{P0:prop:I},
this observation yields
\[
 \hlambda_n(\xi)\le \max\{1/(n-\ell+1),1/\ell\}
\]
for each integer $\ell$ with $1\le \ell\le n/2$.  Thus, if $n\ge 2$, we
obtain  $\hlambda_n(\xi)\le 1/\lfloor n/2\rfloor$, which is the estimate
of Davenport and Schmidt from \cite{DS1969} mentioned in the introduction.
We conclude with another consequence of $\cP(0,\ell)$.

\begin{lemma}
\label{P0:lemma:Fbis}
Suppose that $\cP(0,\ell)$ holds for some integer $\ell\ge 1$ with $2\ell<n$.  Then,
there are infinitely many $i\ge 1$ for which $\cU^\ell(\ux_{i-1})\not\subseteq
\cU^\ell(\ux_i)$ and, for those $i$, we have
\begin{equation*}
\label{P0:lemma:Fbis:eq}
  X_{i+1}^\theta \ll X_i
     \quad\text{where}\quad
  \theta=\frac{\ell\lambda}{1-\lambda}
\end{equation*}
\end{lemma}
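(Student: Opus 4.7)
The plan is to combine three ingredients: the dimension information coming from $\cP(0,\ell)$, the height estimate \eqref{three:eq:HUx}, and the lower bound on heights provided by Lemma \ref{P0:lemma:S}.

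First, I would use $\cP(0,\ell)$ to conclude that $\dim \cU^\ell(\ux_i)=\ell+1$ for every sufficiently large $i$; this is immediate since $A_0(i)=\langle\ux_i\rangle$ has dimension $1$, so $\dim\cU^\ell(A_0(i))\ge \ell+1$ forces equality (the target space $\bR^{n-\ell+1}$ has dimension $n-\ell+1\ge\ell+1$, indeed strict since $2\ell<n$). In particular, for such $i$ the condition $\cU^\ell(\ux_{i-1})\not\subseteq\cU^\ell(\ux_i)$ is equivalent to $\cU^\ell(\ux_{i-1})\neq\cU^\ell(\ux_i)$.

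Next, I would show that this inequality holds for infinitely many $i$. If, on the contrary, $\cU^\ell(\ux_i)=V$ were a fixed subspace for all large $i$, then by the linearity of $\cU^\ell$ and \eqref{minimal:sum},
\[
 V = \sum_{i\ge p}\cU^\ell(\ux_i) = \cU^\ell\!\Big(\sum_{i\ge p}\langle\ux_i\rangle\Big) = \cU^\ell(\bR^{n+1}) = \bR^{n-\ell+1},
\]
contradicting $\dim V = \ell+1 < n-\ell+1$.

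Finally, for each such $i$, I would apply Lemma \ref{P0:lemma:S} with $V=\cU^\ell(\ux_i)$ (so $\cU^\ell(\ux_i)\subseteq V$ trivially and $\cU^\ell(\ux_{i-1})\not\subseteq V$ by construction), which yields $1\ll H(\cU^\ell(\ux_i))\,L_{i-1}$. Combining with \eqref{three:eq:HUx}, namely $H(\cU^\ell(\ux_i))\ll X_iL_i^\ell$, and with the upper bounds $L_i\ll X_{i+1}^{-\lambda}$ and $L_{i-1}\ll X_i^{-\lambda}$ from \eqref{minimal:eq:Li}, one gets
\[
 1 \ll X_i L_i^\ell L_{i-1} \ll X_i^{1-\lambda}X_{i+1}^{-\ell\lambda},
\]
so that $X_{i+1}^{\ell\lambda}\ll X_i^{1-\lambda}$, i.e.\ $X_{i+1}^{\theta}\ll X_i$ with $\theta=\ell\lambda/(1-\lambda)$. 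There is no real obstacle here; the only point to be careful about is the chain of applications of $\cP(0,\ell)$ to get the dimension, and the fact that $2\ell<n$ is precisely what makes the stabilization argument produce a contradiction.
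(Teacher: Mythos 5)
Your proof is correct and follows essentially the same route as the paper: the height lower bound via Lemma~\ref{P0:lemma:S} with $V=\cU^\ell(\ux_i)$, the upper bound $H(\cU^\ell(\ux_i))\ll X_iL_i^\ell$ from \eqref{three:eq:HUx}, and the substitution of \eqref{minimal:eq:Li}. The only cosmetic difference is that you re-prove the existence of infinitely many $i$ with $\cU^\ell(\ux_{i-1})\not\subseteq\cU^\ell(\ux_i)$ directly from \eqref{minimal:sum}, whereas the paper cites Lemma~\ref{P0:lemma:A} (which your argument is essentially a special, non-degenerate case of, since $\dim\cU^\ell(\ux_i)$ attains its maximal value $\ell+1$).
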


\begin{proof}
Since $\cP(0,\ell)$ holds, we have $\dim\cU^\ell(\ux_i)=\ell+1\le n-\ell$
for each sufficiently large $i$.  By Lemma \ref{P0:lemma:A}, we deduce that
both $\dim\cU^\ell(\ux_i)=\ell+1$ and $\cU^\ell(\ux_{i-1})\not\subseteq
\cU^\ell(\ux_i)$ for infinitely many $i\ge 1$.  For those $i$, we have
$H(\cU^\ell(\ux_i))\ll X_iL_i^\ell$ by \eqref{three:eq:HUx}, and Lemma
\ref{P0:lemma:S} gives
\[
  1 \ll H(\cU^\ell(\ux_i))L_{i-1} \ll X_iL_i^\ell L_{i-1}
     \ll X_i^{1-\lambda}X_{i+1}^{-\ell\lambda},
\]
so $X_{i+1}^\theta \ll X_i$.
\end{proof}

%
%

\section{Property $\cP(1,\ell)$}
\label{sec:P1}

With the notation of the preceding sections (including the choice of
$\lambda$), we provide below a sufficient condition on $\ell$ and $\lambda$
for property $\cP(1,\ell)$ to hold.  We also establish consequences of
$\cP(1,\ell)$.  The results of this section provide a first step towards the
proof of Theorems \ref{intro:thm:impair} and \ref{intro:thm:pair}.  We start
with a crude height estimate.

\begin{lemma}
\label{P1:lemma1}
If $\cP(1,\ell)$ holds for some $\ell\ge 0$, then
$H(\UU^\ell(\ux_i,\ux_{i+1}))\ll X_{i+1}^{1-(\ell+1)\lambda}$.
\end{lemma}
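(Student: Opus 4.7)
The plan is to apply Lemma~\ref{heights:lemma} to a carefully chosen family of $d:=\dim\UU^\ell(\ux_i,\ux_{i+1})$ linearly independent integer vectors drawn from the $2(\ell+1)$ truncated points $\ux_i^{(k,\ell)},\ux_{i+1}^{(k,\ell)}$ with $0\le k\le\ell$, all lying in $\bZ^{n-\ell+1}$.

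First I would observe that $\ux_i$ and $\ux_{i+1}$ are always linearly independent, since they are primitive integer vectors with positive first coordinate and distinct norms; hence $\langle\ux_i,\ux_{i+1}\rangle$ has dimension $2$ and coincides with $A_1(i)$. Under $\cP(1,\ell)$ this gives $d\ge\ell+2$ for each sufficiently large $i$. Since $\cP(1,\ell)$ entails $\cP(0,\ell)$, we also have $\dim\UU^\ell(\ux_i)=\ell+1$ for such $i$, so the $\ell+1$ vectors $\ux_i^{(0,\ell)},\dots,\ux_i^{(\ell,\ell)}$ are linearly independent. I would then extend this family to $d$ linearly independent integer vectors $\uy_1,\dots,\uy_d$ spanning $\UU^\ell(\ux_i,\ux_{i+1})$ by adjoining $d-\ell-1\ge 1$ suitable points among $\ux_{i+1}^{(0,\ell)},\dots,\ux_{i+1}^{(\ell,\ell)}$.

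Applying Lemma~\ref{heights:lemma} in $\bZ^{n-\ell+1}$ yields $H(\UU^\ell(\ux_i,\ux_{i+1}))\le\|\uy_1\wedge\cdots\wedge\uy_d\|\ll\sum_{t=1}^d\|\uy_t\|\prod_{s\ne t}L_\xi(\uy_s)$. By~\eqref{three:eq:xkl}, each $\uy_t$ coming from $\ux_i$ satisfies $\|\uy_t\|\le X_i$ and $L_\xi(\uy_t)\ll L_i$, while each $\uy_t$ from $\ux_{i+1}$ satisfies $\|\uy_t\|\le X_{i+1}$ and $L_\xi(\uy_t)\ll L_{i+1}$. Each of the $d$ terms in the sum thus consists of one norm (at most $X_{i+1}$) and $d-1$ values of $L_\xi$ comprising either $\ell$ copies of $L_i$ with $d-\ell-1$ copies of $L_{i+1}$, or $\ell+1$ copies of $L_i$ with $d-\ell-2\ge 0$ copies of $L_{i+1}$. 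Using $L_{i+1}<L_i<1$ and $X_i\le X_{i+1}$, each term is $\ll X_{i+1}L_i^{\ell+1}$, hence $H(\UU^\ell(\ux_i,\ux_{i+1}))\ll X_{i+1}L_i^{\ell+1}$.

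Finally, invoking~\eqref{minimal:eq:Li}, namely $L_i\ll X_{i+1}^{-\lambda}$, gives the desired bound $X_{i+1}L_i^{\ell+1}\ll X_{i+1}^{1-(\ell+1)\lambda}$. No serious obstacle arises; the only mild care required is the book-keeping that shows every term in the Davenport--Schmidt sum fits under the common envelope $X_{i+1}L_i^{\ell+1}$ regardless of how many vectors from $\ux_{i+1}$ one must adjoin to complete the basis.
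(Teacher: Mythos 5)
Your proof is correct and takes essentially the same approach as the paper: both apply Lemma~\ref{heights:lemma} to a basis of $\UU^\ell(\ux_i,\ux_{i+1})$ drawn from the truncated points, use $\cP(1,\ell)$ to get dimension $d\ge\ell+2$, and conclude via $L_i\ll X_{i+1}^{-\lambda}$. The paper's version is a touch more economical — it does not single out $\ell+1$ vectors from $\ux_i$ via $\cP(0,\ell)$, but simply observes that every generator has norm $\le X_{i+1}$ and $L_\xi\ll L_i$, so Lemma~\ref{heights:lemma} immediately gives $H\ll X_{i+1}L_i^{d-1}\le X_{i+1}L_i^{\ell+1}$ using $L_i<1$; your more detailed book-keeping reaches the same envelope.
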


\begin{proof}
For given $i\ge 0$ and $\ell\in\{0,\dots,n\}$, the subspace
$\UU^\ell(\ux_i,\ux_{i+1})$ of $\bR^{n-\ell+1}$ is generated by
the set $\{\ux_i^{(0,\ell)},\dots,\ux_i^{(\ell,\ell)},
\ux_{i+1}^{(0,\ell)},\dots,\ux_{i+1}^{(\ell,\ell)}\}$ which consists
of integer points $\uy$ with $\norm{\uy}\le X_{i+1}$ and
$L_\xi(\uy)\ll L_i$.  If $\cP(1,\ell)$ holds
and $i$ is large enough, this space has dimension $d\ge \ell+2$,
and so Lemma \ref{heights:lemma} gives $H(\UU^\ell(\ux_i,\ux_{i+1}))
\ll X_{i+1}L_i^{d-1}\le X_{i+1}^{1-(\ell+1)\lambda}$.
\end{proof}

By the above, property $\cP(1,\ell)$ implies that
$\lambda\le 1/(\ell+1)$ and so $\hlambda_n(\xi)\le 1/(\ell+1)$.  The
next lemma provides finer estimates.

\begin{lemma}
\label{P1:lemma2}
Suppose that $n\ge 2$ and that $\cP(1,\ell)$ holds for some integer
$\ell\ge 0$.  Then, for each pair of consecutive elements $i<j$ of $I$,
we have
\[
 H(\UU^\ell(\ux_i,\ux_{i+1}))
    \ll X_{j+1}^{-\ell\lambda}X_{i+1}^{1-\lambda}
 \et
 H(\UU^\ell(\ux_{i-1},\ux_i,\ux_{i+1}))
    \ll X_{j+1}^{-\ell\lambda}X_{i+1}^{1-\lambda}X_i^{-e\lambda},
\]
where $e=\dim\UU^\ell(\ux_{i-1},\ux_i,\ux_{i+1})-\ell-2$.
\end{lemma}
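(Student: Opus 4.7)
\noindent\emph{Proof plan.} The idea is to apply Lemma \ref{heights:lemma} to a carefully chosen spanning family of integer vectors so that the identity $X_jL_{j-1}\asymp X_{i+1}L_i$ from Lemma \ref{minimal:XLXL} can be used. Since $\cP(1,\ell)$ implies $\cP(0,\ell)$, the observation following Proposition \ref{P0:prop:BS} gives $\dim\UU^\ell(\ux_k)=\ell+1$ for all sufficiently large $k$. Also, $d:=\dim\UU^\ell(\ux_i,\ux_{i+1})=\dim\UU^\ell(A_1(i))\ge\ell+2$ by hypothesis. Since $A_1(i)=\Span{\ux_i,\ux_{i+1}}$ coincides with $\Span{\ux_{j-1},\ux_j}$ (both are the 2-dimensional subspace $A_1(j-1)$), we have $\UU^\ell(\ux_i,\ux_{i+1})=\UU^\ell(\ux_{j-1},\ux_j)$.

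For the first estimate, I would form a spanning set of $d$ integer vectors for this subspace by taking the $\ell+1$ projections $\ux_j^{(k,\ell)}$ (which form a basis of $\UU^\ell(\ux_j)$) together with $d-\ell-1$ projections of $\ux_{j-1}$ whose classes are linearly independent in $\UU^\ell(\ux_{j-1},\ux_j)/\UU^\ell(\ux_j)$ (possible since this quotient has dimension $d-\ell-1\le\ell+1=\dim\UU^\ell(\ux_{j-1})$). Lemma \ref{heights:lemma} then yields an upper bound as a sum of $d$ terms; using \eqref{three:eq:xkl} together with $X_{j-1}\le X_j$ and $L_j\le L_{j-1}$, each term is dominated by the leading one $X_jL_j^\ell L_{j-1}^{d-\ell-1}$. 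Extracting one factor $L_{j-1}$ (valid because $d-\ell-1\ge 1$) and invoking Lemma \ref{minimal:XLXL} gives $\asymp X_{i+1}L_iL_j^\ell L_{j-1}^{d-\ell-2}$; then $L_i\ll X_{i+1}^{-\lambda}$ and $L_j\ll X_{j+1}^{-\lambda}$ from \eqref{minimal:eq:Li}, together with $L_{j-1}<1$, produce the claimed bound.

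For the second estimate, let $D=\dim\UU^\ell(\ux_{i-1},\ux_i,\ux_{i+1})=\ell+2+e$. Since $\UU^\ell(\ux_{i-1},\ux_i,\ux_{i+1})=\UU^\ell(\ux_i,\ux_{i+1})+\UU^\ell(\ux_{i-1})$ and $\dim\UU^\ell(\ux_{i-1})=\ell+1\ge D-d$, I would extend the previous spanning family by $D-d$ projections of $\ux_{i-1}$ whose classes are independent in $\UU^\ell(\ux_{i-1},\ux_i,\ux_{i+1})/\UU^\ell(\ux_i,\ux_{i+1})$. The same domination argument in Lemma \ref{heights:lemma} produces the upper bound $\ll X_jL_j^\ell L_{j-1}^{d-\ell-1}L_{i-1}^{D-d}$, and the same algebraic manipulations as above, together with $L_{j-1}\ll X_j^{-\lambda}$, $L_{i-1}\ll X_i^{-\lambda}$, and the crucial replacement $X_j^{-(d-\ell-2)\lambda}\le X_i^{-(d-\ell-2)\lambda}$ permitted by $X_j\ge X_i$, produce the factor $X_i^{-[(d-\ell-2)+(D-d)]\lambda}=X_i^{-e\lambda}$ and hence the stated bound.

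The main obstacle is essentially one of dimension bookkeeping: ensuring that the spanning families of the announced form exist and that the leading term of Lemma \ref{heights:lemma} dominates each of the $D$ summands. Both points reduce to elementary inequalities between the $X_k$ and $L_k$, once one unfolds the consequences of $\cP(1,\ell)$ on the dimensions of $\UU^\ell$ on one- and two-dimensional subspaces.
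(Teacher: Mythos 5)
Your proposal is correct and follows essentially the same route as the paper: decompose $\UU^\ell(\ux_{i-1},\ux_i,\ux_{i+1})$ as $\UU^\ell(\ux_j)+\UU^\ell(\ux_{j-1})+\UU^\ell(\ux_{i-1})$ using $\Span{\ux_i,\ux_{i+1}}=\Span{\ux_{j-1},\ux_j}$, pick a spanning family of integer points layered by type, apply Lemma \ref{heights:lemma} with the leading term $X_jL_j^\ell L_{j-1}^a L_{i-1}^b$, and convert $X_jL_{j-1}$ to $X_{i+1}L_i$ via Lemma \ref{minimal:XLXL}. The only minor variation is how you dispose of the extra factor $L_{j-1}^{a-1}$ (you pass through $X_j^{-(a-1)\lambda}\le X_i^{-(a-1)\lambda}$, while the paper uses $L_{j-1}\le L_{i-1}$ directly); both are valid.
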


\begin{proof}
For a pair of consecutive elements $i<j$ of $I$, we have $\Span{\ux_i,\ux_{i+1}}=
\Span{\ux_{j-1},\ux_j}$.  Thus we have a chain of subspaces
\begin{align*}
 U=\UU^\ell(\ux_j)
  &\subseteq
  V=\UU^\ell(\ux_i,\ux_{i+1})=U+\UU^\ell(\ux_{j-1}) \\
  &\subseteq
  W=\UU^\ell(\ux_{i-1},\ux_i,\ux_{i+1})=V+\UU^\ell(\ux_{i-1}).
\end{align*}
If $i$ is large enough, then by property $\cP(1,\ell)$ we have $\dim(U)=\ell+1$,
$\dim(V)=\dim(U)+a$ and $\dim(W)=\dim(V)+b$ for some integers $a\ge 1$
and $b\ge0$.  As each subspace $\UU^\ell(\ux_h)$ is generated by integer
points $\uy$ with $\norm{\uy}\le X_h$ and $L_\xi(\uy)\ll L_h$, Lemma
\ref{heights:lemma} gives
\[
 H(V) \ll X_j L_j^\ell L_{j-1}^a
 \et
 H(W) \ll X_j L_j^\ell L_{j-1}^a L_{i-1}^b.
\]
Finally, by Lemma \ref{minimal:XLXL}, we have $X_jL_{j-1}\asymp X_{i+1}L_i$.
Since $a\ge 1$, we deduce that
\[
 H(V) \ll L_j^\ell X_j L_{j-1} \asymp L_j^\ell X_{i+1}L_i
 \et
 H(W) \ll L_j^\ell X_j L_{j-1} L_{i-1}^e \asymp L_j^\ell X_{i+1}L_i L_{i-1}^e,
\]
where $e=a+b-1=\dim(W)-\ell-2\ge 0$.  The conclusion follows.
\end{proof}

Under the hypotheses of Lemma \ref{P1:lemma2}, we have
$1\le H(\UU^\ell(\ux_i,\ux_{i+1}))  \ll X_{j+1}^{-\ell\lambda}X_{i+1}^{1-\lambda}$
and so $X_{j+1}^\theta\ll X_{i+1}$ with $\theta=\ell\lambda/(1-\lambda)$,
for each pair of consecutive elements $i<j$ of $I$.  A fortiori, this implies that
$X_{i+1}^\theta\ll X_{i}$ for each $i\ge 0$.  The following result yields a weaker estimate but assumes $\cP(1,\ell-1)$ instead of $\cP(1,\ell)$.

\begin{lemma}
\label{P1:lemma3}
Suppose that $\cP(1,\ell-1)$ holds for some integer $\ell$ with
$1\le \ell\le n/2$, and that $\hlambda_n(\xi) > 1/(2\ell)$.
Then, for each $i\ge 0$, we have $X_iL_i^\ell\gg 1$ and so
$X_{i+1}^{\ell\lambda}\ll X_i$ .
\end{lemma}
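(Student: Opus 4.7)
The plan is to establish the pointwise bound $X_iL_i^\ell\gg 1$ for each sufficiently large $i$; the consequence $X_{i+1}^{\ell\lambda}\ll X_i$ then follows at once from $L_i\ll X_{i+1}^{-\lambda}$, and the inequalities for finitely many small indices can be absorbed into the implicit constant. Fix $\lambda$ with $1/(2\ell)<\lambda<\hlambda_n(\xi)$, which is possible by hypothesis. Since $\cP(1,\ell-1)$ entails $\cP(0,\ell-1)$, we have $\dim\UU^{\ell-1}(\ux_i)=\ell$ for $i$ large, and Corollary~\ref{three:cor}(ii) combined with $\ell\le n/2$ forces $\dim\UU^\ell(\ux_i)\in\{\ell,\ell+1\}$. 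The proof will split into those two cases.

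In the generic case $\dim\UU^\ell(\ux_i)=\ell+1$, Lemma~\ref{heights:lemma} applied to the $\ell+1$ truncations $\ux_i^{(0,\ell)},\dots,\ux_i^{(\ell,\ell)}$ immediately yields $H(\UU^\ell(\ux_i))\ll X_iL_i^\ell$, so the bound $X_iL_i^\ell\gg 1$ follows because the height is at least $1$.

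The main obstacle is the degenerate case $\dim\UU^\ell(\ux_i)=\ell$, where the generic argument breaks down. There I plan to apply Proposition~\ref{three:prop:height} with $A=\langle\ux_i\rangle$, $j=0$, $d=\ell$, which produces a subspace $V=\UU^{n-\ell}(\ux_i)\subseteq\bR^{\ell+1}$ satisfying $H(\UU^{\ell-1}(\ux_i))\asymp H(V)^{n-2\ell+2}$, together with the equivalence: for any $\ux\in\bR^{n+1}$, $\UU^{\ell-1}(\ux)\subseteq\UU^{\ell-1}(\ux_i)$ iff $\UU^{n-\ell}(\ux)\subseteq V$. Two consecutive minimal points are linearly independent by minimality (else $\ux_i=\pm\ux_{i-1}$, contradicting either the strict growth of norms or the sign convention), hence $A_1(i-1)=\langle\ux_{i-1},\ux_i\rangle$ and $\cP(1,\ell-1)$ at index $i-1$ gives $\dim\UU^{\ell-1}(\ux_{i-1},\ux_i)\ge\ell+1$. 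Since $\dim\UU^{\ell-1}(\ux_i)=\ell$, this forces $\UU^{\ell-1}(\ux_{i-1})\not\subseteq\UU^{\ell-1}(\ux_i)$, hence $\UU^{n-\ell}(\ux_{i-1})\not\subseteq V$ by the equivalence. Lemma~\ref{P0:lemma:S} (with $\ell$ replaced by $n-\ell$) then yields $1\ll H(V)L_{i-1}$. Coupling this with $H(V)^{n-2\ell+2}\asymp H(\UU^{\ell-1}(\ux_i))\ll X_iL_i^{\ell-1}$ (using Lemma~\ref{heights:lemma}) produces the key inequality
\[
 1\ll X_iL_i^{\ell-1}L_{i-1}^{n-2\ell+2}.
\]

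The hypothesis $\hlambda_n(\xi)>1/(2\ell)$ finally converts this into the desired bound. Substituting $L_{i-1}\ll X_i^{-\lambda}$ yields $L_i^{\ell-1}\gg X_i^{(n-2\ell+2)\lambda-1}$, so (assuming $\ell\ge 2$) $X_iL_i^\ell\gg X_i^{(\ell(n-2\ell+2)\lambda-1)/(\ell-1)}$. The exponent is strictly positive because $n\ge 2\ell$ and $\lambda>1/(2\ell)$ together force $\ell(n-2\ell+2)\lambda>(n-2\ell+2)/2\ge 1$, so in fact $X_iL_i^\ell\to\infty$. The edge case $\ell=1$ is handled separately and more simply: since $\hlambda_n(\xi)>1/2\ge 1/n$ for $n\ge 2$, Proposition~\ref{P0:prop:BS} directly furnishes $\cP(0,1)$, forcing $\dim\UU^1(\ux_i)=2$ for $i$ large so that we fall back on the generic case.
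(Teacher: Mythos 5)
Your proposal is correct and follows essentially the same route as the paper: split on whether $\dim\UU^\ell(\ux_i)$ equals $\ell$ or $\ell+1$, handle the generic case by $1\le H(\UU^\ell(\ux_i))\ll X_iL_i^\ell$, and handle the degenerate case by invoking Proposition~\ref{three:prop:height} with $A=\langle\ux_i\rangle$, using $\cP(1,\ell-1)$ to get $\UU^{\ell-1}(\ux_{i-1})\not\subseteq\UU^{\ell-1}(\ux_i)$, and then Lemma~\ref{P0:lemma:S} to obtain $H(V)L_{i-1}\gg 1$. The only real divergence is in the final bookkeeping. The paper weakens $H(V)^{n-2\ell+2}\ge H(V)^2$ (valid because $n\ge 2\ell$) to get the cleaner chain $1\ll X_iL_i^{\ell-1}L_{i-1}^2\ll X_i^{1-2\lambda}L_i^{\ell-1}$, after which the condition $\lambda\ge 1/(2\ell)$ gives $1\ll X_i^{(\ell-1)/\ell}L_i^{\ell-1}$ and then $1\ll X_iL_i^\ell$ by taking $\ell/(\ell-1)$ powers. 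You keep the full exponent $n-2\ell+2$; your inequality is formally stronger and your final exponent $\big(\ell(n-2\ell+2)\lambda-1\big)/(\ell-1)>0$ is correctly verified from $\ell\lambda>1/2$ and $n-2\ell+2\ge 2$, so both routes land in the same place. You also treat $\ell=1$ separately via Proposition~\ref{P0:prop:BS}, showing the degenerate branch cannot occur for large $i$; the paper's phrasing for $\ell=1$ is slightly elliptical (the chain $1\ll X_i^{1-1/\ell}L_i^{\ell-1}$ becomes vacuous and one must instead observe that $1\ll X_i^{1-2\lambda}$ with $1-2\lambda<0$ excludes the degenerate case), so your explicit treatment is a genuine clarification, though it arrives at the same conclusion.
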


\begin{proof}
By definition of property $\cP(1,\ell-1)$ we have,
\[
  \dim\UU^{\ell-1}(\ux_{i-1},\ux_i)\ge \ell+1
  \et
  \dim\UU^{\ell-1}(\ux_i)\ge \ell
\]
for each sufficiently large $i\ge 1$.  Fix such an integer $i$.  Then,
Corollary \ref{three:cor}(ii) gives
\[
  \dim\UU^{\ell}(\ux_i)\ge \min\{\dim\UU^{\ell-1}(\ux_i),n-\ell+1\}\ge \ell.
\]
If $\dim\UU^{\ell}(\ux_i)\ge \ell+1$, then $1\le H(\UU^{\ell}(\ux_i)) \ll X_iL_i^{\ell}$
and we are done.  Otherwise, $\UU^\ell(\ux_i)$ has dimension $\ell$. Then,
Proposition \ref{three:prop:height} applies with $A=\Span{\ux_i}$,
$j=0$, $d=\ell$, $V=\UU^{n-\ell}(\ux_i)$  and $t=\ell-1$.  It gives
\[
  \dim\UU^{\ell-1}(\ux_i)=\ell
  \et
  H(\UU^{\ell-1}(\ux_i)) \asymp H(V)^{n-2\ell+2} \ge H(V)^2 .
\]
Moreover, since $\dim\UU^{\ell-1}(\ux_{i-1},\ux_i)>\ell$, we have
$\UU^{\ell-1}(\ux_{i-1}) \not\subseteq\UU^{\ell-1}(\ux_i)$ and so the
proposition also gives $\UU^{n-\ell}(\ux_{i-1}) \not\subseteq V$.
By Lemma \ref{P0:lemma:S}, this in turn yields $H(V)L_{i-1}\gg 1$, thus
we find
\[
 1\ll H(\UU^{\ell-1}(\ux_i))L_{i-1}^2 \ll X_iL_i^{\ell-1}L_{i-1}^2 \ll X_i^{1-2\lambda}L_i^{\ell-1}.
\]
As $\hlambda_n(\xi) > 1/(2\ell)$, we may assume that $\lambda\ge 1/(2\ell)$.
This gives $1\ll X_i^{1-1/\ell}L_i^{\ell-1}$ and so $1\ll X_iL_i^\ell$.
\end{proof}

We now come to the main result of this section.

\begin{prop}
\label{P1:prop}
Suppose that, for some integer $\ell\ge 1$ with $1+2\ell\le n$, property $\cP(0,\ell)$
holds but not $\cP(1,\ell)$.  Then we have $0\le 1-(n-\ell)\lambda-\ell\lambda^2$.
\end{prop}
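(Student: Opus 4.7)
The plan is as follows. First, I will observe that the hypothesis $\cP(0,\ell)$ (with $\ell\ge 1$) automatically yields $\cP(1,\ell-1)$ via Proposition~\ref{minimal:prop:Pjl}. Since $\cP(1,\ell)$ fails while $\cP(1,\ell-1)$ holds, Proposition~\ref{P0:prop:I} with $j=1$ will provide arbitrarily large integers $i\ge 1$ satisfying
\[
  \dim\UU^{\ell}(A_1(i))=\ell+1,\qquad \UU^{\ell}(\ux_{i-1})\not\subseteq \UU^{\ell}(A_1(i)),
\]
together with the lower bound $H(\UU^{\ell-1}(A_1(i)))\gg L_{i-1}^{-(n-2\ell+1)}$. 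The non-containment forces $\ux_{i-1}\notin A_1(i)$, so $i\in I$ and $A_1(i)=\Span{\ux_i,\ux_{i+1}}$; I will denote by $j'$ the successor of $i$ in $I$.

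Next, since $\cP(1,\ell-1)$ holds, I will apply Lemma~\ref{P1:lemma2} with $\ell$ replaced by $\ell-1$ to produce the matching upper bound
\[
  H(\UU^{\ell-1}(A_1(i)))=H(\UU^{\ell-1}(\ux_i,\ux_{i+1}))\ll X_{j'+1}^{-(\ell-1)\lambda}X_{i+1}^{1-\lambda}.
\]
Combining the two estimates on $H(\UU^{\ell-1}(A_1(i)))$ and using $L_{i-1}\ll X_i^{-\lambda}$ will yield the key inequality
\[
  X_i^{\lambda(n-2\ell+1)}\,X_{j'+1}^{(\ell-1)\lambda}\ll X_{i+1}^{1-\lambda}.
\]

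Since the good indices $i$ satisfy a fortiori $\UU^{\ell}(\ux_{i-1})\not\subseteq\UU^{\ell}(\ux_i)$, Lemma~\ref{P0:lemma:Fbis} will then deliver $X_{i+1}^{\ell\lambda/(1-\lambda)}\ll X_i$. I will substitute this into the previous inequality together with the trivial bound $X_{j'+1}\ge X_{i+1}$, take logarithms, divide by $\log X_{i+1}$, and pass to the limit $i\to\infty$. A short algebraic rearrangement (multiplying by $1-\lambda$ and collecting terms) should produce the desired inequality $1-(n-\ell)\lambda-\ell\lambda^2\ge 0$; in the balanced case $n=2\ell+1$ relevant to Theorem~\ref{intro:thm:impair}, this works out directly.

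The main obstacle is the precise algebraic packaging in the last step: the direct combination above yields the sharp target exactly when $n=2\ell+1$, but for $n>2\ell+1$ it only produces the a priori weaker inequality $\ell(n-2\ell)\lambda^2+(\ell+1)\lambda\le 1$. Closing the gap to the stated sharp form for general $n\ge 2\ell+1$ will likely require additionally exploiting the refined second estimate in Lemma~\ref{P1:lemma2} (which bounds $H(\UU^{\ell-1}(\ux_{i-1},\ux_i,\ux_{i+1}))$ with the extra factor $X_i^{-e\lambda}$) together with Schmidt's inequality~\eqref{heights:eq:schmidt} applied to the chain $\UU^{\ell-1}(A_1(i))\subsetneq \UU^{\ell-1}(\ux_{i-1},\ux_i,\ux_{i+1})$, so as to transfer the improved exponent on the larger space back down to $\UU^{\ell-1}(A_1(i))$.
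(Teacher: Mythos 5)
Your plan follows the paper's strategy closely up to and including the key inequality
\[
 X_i^{(n-2\ell+1)\lambda}\ll X_{i+1}^{1-\ell\lambda},
\]
which you correctly extract (via Proposition~\ref{P0:prop:I} and Lemma~\ref{P1:lemma2}, using $X_{j'+1}\ge X_{i+1}$, essentially equivalent to the paper's use of Lemma~\ref{P1:lemma1}). The genuine gap, which you yourself identify, is the other relation. You use Lemma~\ref{P0:lemma:Fbis}, i.e.\ $X_{i+1}^{\ell\lambda}\ll X_i^{1-\lambda}$, which rests only on $\cP(0,\ell)$ and the one--dimensional Lemma~\ref{P0:lemma:S}. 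The paper instead obtains the strictly stronger
\[
 X_{i+1}^{\ell\lambda}\ll X_i^{1-(n-2\ell)\lambda},
\]
by applying Proposition~\ref{three:prop:height} directly to $A=A_1(i)$ with $d=\ell+1$ and \emph{both} values $t=\ell-1$ and $t=\ell$. With $V=\UU^{n-\ell-1}(A_1(i))$, this yields $H(\UU^{\ell-1}(A_1(i)))\asymp H(V)^{n-2\ell+1}$ \emph{and} $H(\UU^{\ell}(A_1(i)))\asymp H(V)^{n-2\ell}$, and Lemma~\ref{P0:lemma:S} applied to $V$ gives $X_i^\lambda\ll H(V)$. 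Since $\cP(0,\ell)$ forces $\UU^{\ell}(A_1(i))=\UU^{\ell}(\ux_i)$, the upper bound $H(\UU^{\ell}(\ux_i))\ll X_iL_i^\ell\ll X_iX_{i+1}^{-\ell\lambda}$ from \eqref{three:eq:HUx} then delivers the stronger growth relation. Your argument recovers only the $t=\ell-1$ height bound because Proposition~\ref{P0:prop:I} (as stated) reports only that one, and the $\ell$--level information is exactly what fills the gap between your weaker output $0\le 1-(\ell+1)\lambda-\ell(n-2\ell)\lambda^2$ and the stated target $0\le 1-(n-\ell)\lambda-\ell\lambda^2$ when $n>2\ell+1$. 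Your proposed remedy via Schmidt's inequality~\eqref{heights:eq:schmidt} and the second estimate of Lemma~\ref{P1:lemma2} does not obviously produce the needed factor $X_i^{(n-2\ell)\lambda}$; the clean fix is to invoke Proposition~\ref{three:prop:height} directly rather than going through Proposition~\ref{P0:prop:I} alone. For $n=2\ell+1$ (the case used in Theorem~\ref{intro:thm:impair}) your argument is complete and matches the paper.
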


\begin{proof}
Since $\cP(0,\ell)$ holds, we have $\ell+1=\dim\UU^\ell(A_0(i))
\le \dim\UU^\ell(A_1(i))$ for each sufficiently large $i\ge 0$.  Since $\cP(1,\ell)$
does not hold, we also have $\dim\UU^\ell(A_1(i))\le \ell+1$ for arbitrarily large
values of $i$.  Thus, Lemma \ref{P0:lemma:A} applies with $j=1$ and $d=\ell+1$,
and so there are arbitrarily large integers $i\ge 1$ with
\[
 \dim\UU^\ell(A_1(i))=\ell+1
 \et
 \UU^\ell(\ux_{i-1})\not\subseteq \UU^\ell(A_1(i))\subseteq\bR^{n-\ell+1}.
\]
For these $i$, Proposition \ref{three:prop:height} applies with $A=A_1(i)$,
$j=1$, $d=\ell+1$ and any $t\in\{\ell-1,\ell\}$.  Setting
$V=\UU^{n-\ell-1}(A_1(i))$, it gives
\[
 H\big(\UU^{\ell-1}(A_1(i))\big)\asymp H(V)^{n-2\ell+1}
 \et
 H\big(\UU^{\ell}(A_1(i))\big)\asymp H(V)^{n-2\ell}.
\]
Since  $\UU^\ell(\ux_{i-1})\not\subseteq \UU^\ell(A_1(i))$, it also gives
$\UU^{n-\ell-1}(\ux_{i-1})\not\subseteq V$.  By Lemma \ref{P0:lemma:S},
this implies that $1\ll H(V)L_{i-1}\ll H(V)X_i^{-\lambda}$ and so
\[
 X_i^\lambda \ll H(V).
\]
Because of $\cP(0,\ell)$, the subspace $\UU^\ell(\ux_i)$ of
$\UU^\ell(A_1(i))$ has dimension $\ell+1$ when $i$ is large enough, and
then it coincides with $\UU^\ell(A_1(i))$.  Moreover, $\cP(1,\ell-1)$ holds
by Proposition \ref{minimal:prop:Pjl}. Thus using \eqref{three:eq:HUx} and
Lemma \ref{P1:lemma1}, the above estimates imply
\begin{align*}
 X_i^{(n-2\ell)\lambda}
 &\ll H(\UU^\ell(\ux_i))\ll X_iL_i^\ell\ll X_iX_{i+1}^{-\ell\lambda},\\
 X_i^{(n-2\ell+1)\lambda}
 &\ll H(\UU^{\ell-1}(\ux_i,\ux_{i+1}))\ll X_{i+1}^{1-\ell\lambda}.
\end{align*}
The second row of estimates implies $1-\ell\lambda>0$ and provides a
lower bound for $X_{i+1}$ in terms of $X_i$.  Substituting it in the first row
and comparing powers of $X_i$, we deduce that
\[
 (1-\ell\lambda)(n-2\ell)\lambda
\le (1-\ell\lambda)-(\ell\lambda)(n-2\ell+1)\lambda,
\]
which after simplications reduces to $0\le 1-(n-\ell)\lambda-\ell\lambda^2$.
\end{proof}

\begin{cor}
\label{P1:cor}
Suppose that $n\ge 3$.  Let $\ell$ be an integer with $1\le \ell < n/2$
and let $\rho$ denote the unique positive root of the polynomial
$P(x)=1-(n-\ell)x-\ell x^2$.  Then, we have
$\rho>1/(n-\ell+1)$.  If $\hlambda_n(\xi)>\rho$, then $\cP(1,\ell)$ holds.
\end{cor}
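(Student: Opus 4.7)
The plan is to assemble this corollary from Propositions \ref{P0:prop:BS} and \ref{P1:prop}, both already proven, together with a direct numerical check.

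\textbf{Step 1: the inequality $\rho>1/(n-\ell+1)$.} I would simply evaluate $P$ at $x_0=1/(n-\ell+1)$. A short computation gives
\[
 P(x_0)=1-\frac{n-\ell}{n-\ell+1}-\frac{\ell}{(n-\ell+1)^2}
       =\frac{(n-\ell+1)-\ell}{(n-\ell+1)^2}
       =\frac{n-2\ell+1}{(n-\ell+1)^2},
\]
which is strictly positive since $\ell<n/2$ forces $n-2\ell+1\ge 2$. Next, observe that $P'(x)=-(n-\ell)-2\ell x<0$ for all $x\ge 0$, so $P$ is strictly decreasing on $[0,\infty)$. Since $P(0)=1>0$, $P$ has a unique positive root $\rho$, and $P(x_0)>0$ forces $x_0<\rho$.

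\textbf{Step 2: $\cP(0,\ell)$ holds.} Assuming $\hlambda_n(\xi)>\rho$, pick any $\lambda$ with $\rho<\lambda<\hlambda_n(\xi)$, as in the standing convention of Section~\ref{sec:P0}. By Step~1, $\lambda>\rho>1/(n-\ell+1)$, and the hypothesis $\ell<n/2$ ensures $\ell\le n/2$, so Proposition \ref{P0:prop:BS} directly gives $\cP(0,\ell)$.

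\textbf{Step 3: contradiction if $\cP(1,\ell)$ failed.} Suppose for contradiction that $\cP(1,\ell)$ does \emph{not} hold. The condition $\ell<n/2$ yields $2\ell<n$, hence $1+2\ell\le n$, so all hypotheses of Proposition \ref{P1:prop} are met. That proposition then gives
\[
 0\le 1-(n-\ell)\lambda-\ell\lambda^2 = P(\lambda).
\]
On the other hand, since $\lambda>\rho$ and $P$ is strictly decreasing on $[0,\infty)$ with $P(\rho)=0$, we have $P(\lambda)<0$, a contradiction. Therefore $\cP(1,\ell)$ must hold.

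This argument is essentially bookkeeping: the real work has already been done in Propositions \ref{P0:prop:BS} and \ref{P1:prop}. The only nontrivial point is the numerical comparison $\rho>1/(n-\ell+1)$, which is needed precisely to ensure that the lower bound $\hlambda_n(\xi)>\rho$ is strong enough to trigger Proposition \ref{P0:prop:BS}; this is handled cleanly by evaluating $P$ at $1/(n-\ell+1)$ and using monotonicity of $P$ on $[0,\infty)$.
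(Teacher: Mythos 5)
Your argument is correct and follows exactly the paper's own proof: evaluate $P$ at $1/(n-\ell+1)$ to get $(n-2\ell+1)/(n-\ell+1)^2>0$ (so $\rho>1/(n-\ell+1)$), then apply Proposition \ref{P0:prop:BS} to get $\cP(0,\ell)$, and finally apply Proposition \ref{P1:prop} (as a contrapositive) using $P(\lambda)<0$. The only difference is that you spell out the monotonicity of $P$ on $[0,\infty)$ rather than leaving it implicit, which is a minor stylistic point.
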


\begin{proof}
Let $k=n-\ell+1$.  Since $P(1/k)=(n-2\ell+1)/k^2>0$, we have $\rho>1/k$.
If $\hlambda_n(\xi)>\rho$, we may assume that $\lambda>\rho$.  Then
we have $\lambda>1/k=1/(n-\ell+1)$ and thus property $\cP(0,\ell)$ holds
by Proposition \ref{P0:prop:BS}.  Since $\lambda>\rho$, we also have
$P(\lambda)<0$, and so the preceding proposition implies that $\cP(1,\ell)$
holds.
\end{proof}

We remarked after Lemma \ref{P1:lemma1} that property $\cP(1,\ell)$ implies
$\hlambda_n(\xi)\le1/(\ell+1)$.  Thus, with the notation and hypotheses of the
above corollary, we obtain
\[
 \hlambda_n(\xi)\le \max\{1/(\ell+1),\rho\}.
\]
If $n=2m+1\ge 3$ is odd, we may choose $\ell=m$.  Then $\rho$ is the positive
root of $P(x)=1-(m+1)x-mx^2$, denoted by $\alpha_m$ in the statement of
Theorem \ref{intro:thm:impair}.  As $P(1/(m+1))<0$, this yields
$\hlambda_n(\xi)\le 1/(m+1)=\lceil n/2\rceil^{-1}$ which is the main
result of Laurent in \cite{La2003}.

%
%

\section{First general height estimates}
\label{sec:first}

With the notation of the preceding section (including the choice of
$\lambda$), we first show that property $\cP(j,\ell)$ yields
special bases for the subspaces $\UU^\ell(A_j(i))$.  Then, we
deduce an upper bound on the height of these subspaces in terms of
the quantities $Y_j(i)$ from Definition \ref{minimal:def:AY}.

\begin{lemma}
\label{first:lemma:basis}
Suppose that $\cP(j,\ell)$ holds for some integers $j,\ell\in\{0,\dots,n\}$.
For each large enough integer $i\ge 0$ and each integer $q\ge i$ such that
$A_j(i)=\langle \ux_i,\dots,\ux_q\rangle$, there exist an integer $e\ge 0$
and a basis $\{\uy_0,\uy_1,\dots,\uy_{\ell+j+e}\}$ of $\UU^\ell(A_j(i))$
made of points of $\bZ^{n+1-\ell}$ of norm $\le X_q$ with
\[
\begin{cases}
  L_\xi(\uy_m)\ll X_{q+1}^{-\lambda} &\text{for \ $0\le m\le \ell$,}\\
  L_\xi(\uy_{\ell+m})\ll Y_{j-m}(i)^{-\lambda} &\text{for \ $1\le m\le j$,}\\
  L_\xi(\uy_{\ell+j+m})\ll Y_{0}(i)^{-\lambda} &\text{for \ $1\le m\le e$.}
\end{cases}
\]
\end{lemma}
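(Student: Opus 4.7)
My plan is to build the basis $\uy_0,\ldots,\uy_{\ell+j+e}$ blockwise, starting with the tightest $L_\xi$ bound and progressively extending with vectors subject to weaker bounds.

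After fixing $i$ large enough that $\cP(j,\ell)$ (and hence $\cP(m,\ell)$ for every $0\le m\le j$) holds, I set $q_m=\sigma_m(i)$ for $m<j$ and $q_j=q$, so that $Y_m(i)=X_{q_m+1}$. By $\cP(0,\ell)$ applied to $\ux_q$, the subspace $\UU^\ell(\ux_q)$ has dimension exactly $\ell+1$; I take its natural generators $\uy_m:=\ux_q^{(m,\ell)}$ for $0\le m\le\ell$ as the first block. Each lies in $\bZ^{n+1-\ell}$, has norm at most $X_q$, and by \eqref{three:eq:xkl} satisfies $L_\xi(\uy_m)\ll L_\xi(\ux_q)\ll X_{q+1}^{-\lambda}$, matching the required bound.

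I then extend the basis inductively, maintaining the invariant that after step $m-1$ the current span $U_{m-1}:=\langle\uy_0,\ldots,\uy_{\ell+m-1}\rangle$ has dimension $\ell+m$. For $1\le m\le j$ I look for an index $k\in\{q_{j-m},\ldots,q\}$ with $\UU^\ell(\ux_k)\not\subseteq U_{m-1}$. Once such a $k$ is in hand, Proposition~\ref{three:prop:avoiding}, applied to $A=\langle\ux_k\rangle$ and $V=U_{m-1}$, produces an integer vector $\uy_{\ell+m}=\tau_\ua(\ux_k)\in\UU^\ell(\ux_k)\setminus U_{m-1}$ with $\sum|a_s|\le(n+1)^\ell$, hence of norm $\ll X_k\le X_q$ and $L_\xi$-value $\ll L_\xi(\ux_k)\ll X_{k+1}^{-\lambda}\le X_{q_{j-m}+1}^{-\lambda}=Y_{j-m}(i)^{-\lambda}$, as needed. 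For $m>j$ any $k\in\{i,\ldots,q\}$ suffices (existence being automatic since the span is a proper subspace of $\UU^\ell(A_j(i))=\sum_{k=i}^{q}\UU^\ell(\ux_k)$), and trivially yields $L_\xi\ll L_i\ll Y_0(i)^{-\lambda}$. The iteration halts once $U_M=\UU^\ell(A_j(i))$, defining $e:=M-j\ge 0$.

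The critical step is the existence of an admissible $k$ in each middle block. Writing $B_m:=\langle\ux_{q_{j-m}},\ldots,\ux_q\rangle$, this is the assertion $\UU^\ell(B_m)\not\subseteq U_{m-1}$. I argue by contradiction: the defining relations of $\sigma_{\cdot}(i)$ give the decomposition $A_j(i)=A_{j-m}(i)+B_m$, so the assumed inclusion would yield $\UU^\ell(A_j(i))\subseteq\UU^\ell(A_{j-m}(i))+U_{m-1}$. Invoking the lower bounds $\dim\UU^\ell(A_j(i))\ge j+\ell+1$ and $\dim\UU^\ell(A_{j-m}(i))\ge j-m+\ell+1$ furnished by $\cP(j,\ell)$, together with the initial inclusion $\UU^\ell(\ux_q)\subseteq U_{m-1}\cap\UU^\ell(A_{j-m}(i))$ (which follows from $\ux_q\in A_{j-m}(i)$ in the generic case), a dimension count produces the required contradiction; the degenerate case $\ux_q\notin A_{j-m}(i)$ is handled by the analogous count applied to $A_{j-m}(i)+\langle\ux_q\rangle$, whose dimension is then $j-m+2$.

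The main obstacle is this last dimension-counting argument, in particular the careful treatment of the degenerate case $\ux_q\notin A_{j-m}(i)$ where the intersection of $U_{m-1}$ with $\UU^\ell(A_{j-m}(i))$ is not automatically large; once this is handled, every other step is routine.
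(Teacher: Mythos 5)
Your plan---build the basis ``from the top'' starting with $\UU^\ell(\ux_q)$ and then appending one vector per block via Proposition~\ref{three:prop:avoiding}---is structurally reasonable, but the critical step you flag at the end is genuinely unresolved, and the dimension count you sketch does not close it.

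Concretely, you want to rule out $\UU^\ell(B_m)\subseteq U_{m-1}$ where $B_m=\langle\ux_{q_{j-m}},\dots,\ux_q\rangle$ and $\dim U_{m-1}=\ell+m$. Your route is: deduce $\UU^\ell(A_j(i))\subseteq \UU^\ell(A_{j-m}(i))+U_{m-1}$ and compare dimensions. But a contradiction from this inclusion would require an \emph{upper} bound on $\dim\bigl(\UU^\ell(A_{j-m}(i))+U_{m-1}\bigr)$, whereas property $\cP(j,\ell)$ only supplies the \emph{lower} bound $\dim\UU^\ell(A_{j-m}(i))\ge j-m+\ell+1$. There is no general upper bound of the form $\dim\UU^\ell(A)\le \dim(A)+\ell$: for instance, a $2$-plane $A$ can have $\dim\UU^1(A)=4$. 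So even in your ``generic case'' where $\dim(U_{m-1}\cap\UU^\ell(A_{j-m}(i)))\ge\ell+1$, the inclusion only yields $\dim\UU^\ell(A_{j-m}(i))\ge j-m+\ell+2$, which is not contradictory. The underlying obstruction is that you have no control on $\dim B_m$: one always has $\dim B_m\ge m+1$, but nothing prevents $\dim B_m>m+1$ (equivalently, $\dim(B_m\cap A_{j-m}(i))>1$), in which case $B_m$ is not of the form $A_m(\cdot)$ and $\cP(m,\ell)$ gives no lower bound on $\dim\UU^\ell(B_m)$ either. The paper avoids exactly this problem by arguing by induction on $j$ and peeling from the \emph{front}: it chooses $p$ \emph{minimal} with $\dim\langle\ux_p,\dots,\ux_q\rangle=j$, which forces $\langle\ux_p,\dots,\ux_q\rangle=A_{j-1}(p)$ so that the inductive hypothesis applies with a controlled dimension, and then it completes the resulting $(\ell+j)$-element family to a basis of $\UU^\ell(A_j(i))$ using the truncated points $\ux_h^{(s,\ell)}$ themselves (no appeal to Proposition~\ref{three:prop:avoiding} is needed, which also gives the exact bound $\norm{\uy_m}\le X_q$ rather than your weaker $\norm{\uy_m}\ll X_q$). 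You would need to either establish $\dim B_m=m+1$ in all cases (which I do not see how to do), or reorganize the construction so that the intermediate spans are of the controlled form $A_{m}(p)$ as the paper does.

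A secondary, smaller discrepancy: your use of $\tau_\ua$ only gives $\norm{\uy_{\ell+m}}\ll X_q$ with implied constant $(n+1)^\ell$, not the bound $\norm{\uy_{\ell+m}}\le X_q$ stated in the lemma. This would be harmless downstream, but it does mean you would prove a slightly weaker statement than the one asserted.
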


\begin{proof}
We proceed by induction on $j$.  If $j=0$, we have $q=i$, and
$\UU^{\ell}(A_0(i))=\cU^\ell(\ux_i)$ has dimension $\ell+1$ for $i$
large enough.  Then the points $\uy_m=\ux_i^{(m,\ell)}$ for $m=0,\dots,\ell$
have the required properties.  Now suppose that $j\ge 1$.  For $i$ large enough,
$\UU^\ell(A_j(i))$ has dimension $\ge \ell+j+1$.   Choose $q\ge i$ such that
$A_j(i)=\langle \ux_i,\dots,\ux_q\rangle$, and choose $p$ minimal with
$i<p\le q$ such that $\dim\langle \ux_p,\dots,\ux_q\rangle=j$ or equivalently
such that $A_{j-1}(p)=\langle \ux_p,\dots,\ux_q\rangle$.  Since
$\cP(j-1,\ell)$ holds, we may assume by induction that,
when $i$ is large enough, the vector space
$\UU^\ell(A_{j-1}(p))$ contains linearly independent points
$\uy_0,\uy_1,\dots,\uy_{\ell+j-1}$ of $\bZ^{n+1-\ell}$ of norm $\le X_q$
with $L_\xi(\uy_m)\ll X_{q+1}^{-\lambda}$ for $0\le m\le \ell$ and
$L_\xi(\uy_{\ell+m})\ll Y_{j-1-m}(p)^{-\lambda}$ for $1\le m\le j-1$.
Since $\ux_{p-1}\notin\langle\ux_p,\dots,\ux_q\rangle$, we have
\[
 \dim\langle\ux_i,\dots,\ux_r\rangle
  \ge 1+\dim\langle\ux_p,\dots,\ux_r\rangle
\]
for each $r=p,\dots,q$.  Thus, for each integer $m$ with $1\le m\le j-1$,
we have $\sigma_m(i) \le \sigma_{m-1}(p) < q$, and so $Y_m(i)\le Y_{m-1}(p)$.
By the above, this means that $L_\xi(\uy_{\ell+m})\ll Y_{j-m}(i)^{-\lambda}$
for $1\le m\le j-1$.  Since $\{\uy_0,\uy_1,\dots,\uy_{\ell+j-1}\}$
is a linearly independent subset of $\UU^\ell(A_j(i))$, we may complete
it to a basis $\{\uy_0,\uy_1,\dots,\uy_{\ell+j+e}\}$ for some $e\ge 0$
by adding $e+1$ points of the form $\ux_h^{(s,\ell)}$ with $i\le h\le q$
and $0\le s\le \ell$.  These new points belong to $\bZ^{n+1-\ell}$,
have norm $\le X_q$, and satisfy $L_\xi(\uy_{\ell+j+m})\ll L_i
\ll Y_{0}(i)^{-\lambda}$ for $0\le m\le e$.
\end{proof}

\begin{prop}
\label{first:prop}
Suppose that $\cP(j,\ell)$ holds for some integers $j,\ell\in\{0,\dots,n\}$
with $j\ge 1$.  For each large enough $i\ge 0$, we have
\[
 H\big(\UU^\ell(A_j(i))\big)
   \ll Y_{j-1}(i)^{1-\ell\lambda}
       \Big( \prod_{m=1}^{j} Y_{j-m}(i)^{-\lambda} \Big)
        Y_0(i)^{-e\lambda}
\]
with $e=\dim\UU^\ell(A_j(i))-\ell-j-1\ge 0$.
\end{prop}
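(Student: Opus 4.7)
The plan is to apply Lemma \ref{first:lemma:basis} with the minimal possible value of $q$, namely $q = \sigma_{j-1}(i) + 1$.  This is well-defined for $i$ large enough, and is the smallest integer for which $A_j(i) = \langle \ux_i, \dots, \ux_q \rangle$, since $\dim\langle \ux_i,\dots,\ux_{\sigma_{j-1}(i)}\rangle = j$ and adjoining $\ux_{\sigma_{j-1}(i)+1}$ raises the dimension to $j+1$.  The crucial feature of this choice is that $X_q = Y_{j-1}(i)$ exactly, while $X_{q+1} \ge X_q = Y_{j-1}(i)$.

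With this $q$, Lemma \ref{first:lemma:basis} supplies a basis $\{\uy_0, \dots, \uy_{\ell+j+e}\}$ of $\UU^\ell(A_j(i))$ consisting of integer points of $\bZ^{n+1-\ell}$ of norm at most $X_q = Y_{j-1}(i)$ and satisfying the stated $L_\xi$-bounds.  I would then invoke Lemma \ref{heights:lemma} to obtain
\[
 H\big(\UU^\ell(A_j(i))\big) \;\le\; \|\uy_0 \wedge \cdots \wedge \uy_{\ell+j+e}\| \;\ll\; \sum_{k=0}^{\ell+j+e} \|\uy_k\| \prod_{h\neq k} L_\xi(\uy_h),
\]
and bound every term uniformly.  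Since $X_{q+1} \ge Y_{j-1}(i) \ge Y_{j-m}(i) \ge Y_0(i)$ for $1\le m\le j$, the smallest of the $L_\xi$-upper bounds provided by the lemma is $X_{q+1}^{-\lambda}$.  Consequently, no matter which index $k$ is dropped from the product, one has
\[
 \prod_{h\neq k} L_\xi(\uy_h) \;\ll\; X_{q+1}^{-\ell\lambda} \prod_{m=1}^{j} Y_{j-m}(i)^{-\lambda} \cdot Y_0(i)^{-e\lambda},
\]
the maximum being attained by discarding one of the $\ell+1$ initial factors, all bounded by $X_{q+1}^{-\lambda}$.  Combining with $\|\uy_k\| \le Y_{j-1}(i)$, using $X_{q+1}^{-\ell\lambda} \le Y_{j-1}(i)^{-\ell\lambda}$, and noting that the number of terms in the sum is bounded in terms of $n$ only, one obtains the asserted estimate.

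There is no substantial technical obstacle; the argument reduces to careful bookkeeping once the right $q$ has been fixed.  The subtle step, which I expect to be the main thing to get right, is the choice $q = \sigma_{j-1}(i) + 1$ rather than the larger $q = \sigma_j(i)$: only with this minimal $q$ does the norm bound $X_q$ coincide with $Y_{j-1}(i)$, producing the crucial factor $Y_{j-1}(i)^{1-\ell\lambda}$ in the conclusion.  A larger value of $q$ would yield a strictly weaker estimate involving $Y_j(i)$ in place of $Y_{j-1}(i)$, which would be insufficient for the later applications in the paper.
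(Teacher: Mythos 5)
Your proof is correct and follows essentially the same route as the paper: you choose the minimal $q$ with $A_j(i)=\Span{\ux_i,\dots,\ux_q}$ (which is exactly what the paper does; you just spell out that this $q$ is $\sigma_{j-1}(i)+1$), observe that then $X_q=Y_{j-1}(i)\le X_{q+1}$, apply Lemma~\ref{first:lemma:basis} to get the adapted basis, and finish with Lemma~\ref{heights:lemma}. Your identification of the minimal choice of $q$ as the decisive step matches the paper's terse remark ``By the choice of $q$, we have $X_q=Y_{j-1}(i)\le X_{q+1}$''; the remaining bookkeeping over which factor is dropped is carried out correctly (in each case the worst term is controlled by $X_{q+1}^{-\ell\lambda}\le Y_{j-1}(i)^{-\ell\lambda}$ together with the monotonicity $Y_0(i)\le\cdots\le Y_{j-1}(i)\le X_{q+1}$).
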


\begin{proof}
For given $i\ge 0$, choose $q\ge i$ minimal such that
$A_j(i)=\langle\ux_i,\dots,\ux_q\rangle$.  Then, assuming $i$ large
enough so that $e\ge 0$, consider the basis $\{\uy_0,\uy_1,\dots,\uy_{\ell+j+e}\}$
of $\UU^\ell(A_j(i))$ provided by Lemma \ref{first:lemma:basis}.
By the choice of $q$, we have $X_q=Y_{j-1}(i)\le X_{q+1}$ and so
$L_\xi(\uy_m)\ll Y_{j-1}(i)^{-\lambda}$ for $0\le m\le \ell$.
Since this basis consists of integer points, we also have
\[
  H\big(\UU^\ell(A_j(i))\big)
    \le \|\uy_0\wedge\uy_1\wedge\cdots\wedge\uy_{\ell+j+e}\|.
\]
We conclude by applying Lemma \ref{heights:lemma} along with
the estimates of Lemma \ref{first:lemma:basis}.
\end{proof}

When $\ell=0$, property $\cP(j,\ell)$ holds and we obtain the following
estimate.

\begin{corollary}
  \label{first:cor:HA}
 $\disp H(A_j(i)) \ll Y_{j-1}(i) \prod_{m=0}^{j-1} Y_m(i)^{-\lambda}$
 for all $j=0,\dots,n-1$ and all $i\ge 0$.
\end{corollary}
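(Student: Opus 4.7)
The plan is to deduce the corollary as a direct specialization of Proposition \ref{first:prop} to the case $\ell=0$, with a separate verification for $j=0$.

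First I would handle the degenerate case $j=0$: here $A_0(i)=\langle\ux_i\rangle$ is spanned by the primitive point $\ux_i$, so $H(A_0(i))=\norm{\ux_i}=X_i=Y_{-1}(i)$, and the empty product $\prod_{m=0}^{-1}Y_m(i)^{-\lambda}$ is $1$, so the stated inequality holds trivially with implicit constant $1$.

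For $j\ge 1$, I would observe that property $\cP(j,0)$ holds unconditionally. Indeed, for each $i\ge 0$ and each $m\in\{0,\dots,j\}$ we have $\UU^0(A_m(i))=A_m(i)$, which has dimension $m+1$; this meets the requirement $\dim\UU^0(A_m(i))\ge m+0+1$ in Definition \ref{minimal:def:Pjl}. Thus Proposition \ref{first:prop} applies with $\ell=0$. The integer $e$ from that proposition is $e=\dim\UU^0(A_j(i))-0-j-1=(j+1)-j-1=0$, so the last factor $Y_0(i)^{-e\lambda}=1$ disappears. Plugging $\ell=0$ and $e=0$ into the bound of Proposition \ref{first:prop} gives
\[
H(A_j(i))=H(\UU^0(A_j(i)))\ll Y_{j-1}(i)^{1}\prod_{m=1}^{j}Y_{j-m}(i)^{-\lambda}.
\]
Finally I would re-index the product by setting $k=j-m$, so $m=1,\dots,j$ corresponds to $k=j-1,\dots,0$, yielding $\prod_{m=1}^{j}Y_{j-m}(i)^{-\lambda}=\prod_{k=0}^{j-1}Y_k(i)^{-\lambda}$, which is exactly the bound claimed in the corollary.

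There is no real obstacle here: the corollary is just a clean specialization, with the only non-trivial points being the trivial validity of $\cP(j,0)$ and the bookkeeping for $e=0$ and the reindexing of the product. The case $j=0$ must simply be split off because Proposition \ref{first:prop} is stated only for $j\ge 1$.
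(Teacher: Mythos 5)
Your proof is correct and takes the same route as the paper: specialize Proposition~\ref{first:prop} to $\ell=0$ after observing that $\cP(j,0)$ always holds with $e=0$. You are in fact a bit more explicit than the paper, which leaves the trivial $j=0$ case (where $H(A_0(i))=X_i=Y_{-1}(i)$) and the reindexing of the product unstated.
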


\begin{cor}
\label{first:cor:T}
Suppose that $\cP(j,\ell)$ holds for some integers $j\ge 1$ and $\ell\ge 0$ with
$j+2\ell<n$.  Then there are arbitrarily large integers $i\ge 1$ for which
$\UU^\ell(\ux_{i-1})\not\subseteq\UU^\ell(\ux_i)$ and
\begin{equation}
\label{first:cor:T:eq}
  1\ll Y_{j-1}(i)^{1-\ell\lambda}
       \Big(\prod_{m=1}^{j}Y_{j-m}(i)^{-\lambda}\Big)
       X_i^{-\lambda}.
\end{equation}
\end{cor}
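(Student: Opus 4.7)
My plan is to establish both assertions of the corollary by splitting into two cases according to the value of
\[
 d := \liminf_{i \to \infty} \dim \UU^\ell(A_j(i)).
\]
Property $\cP(j, \ell)$ forces $d \ge j + \ell + 1$, while the ambient space gives $d \le n - \ell + 1$, so one of the following holds: (i) $d \le n - \ell$, or (ii) $d = n - \ell + 1$.

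In case (i), I would apply Lemma \ref{P0:lemma:A} to the sequence $V_i := \UU^\ell(A_j(i))$ to obtain arbitrarily large indices $i$ for which $\dim V_i = d$ and $\UU^\ell(\ux_{i-1}) \not\subseteq V_i$. Since $\UU^\ell(\ux_i) \subseteq V_i$, this automatically yields the required non-containment $\UU^\ell(\ux_{i-1}) \not\subseteq \UU^\ell(\ux_i)$. For \eqref{first:cor:T:eq}, I would combine Lemma \ref{P0:lemma:S} applied with $V = V_i$, which gives $1 \ll H(V_i) L_{i-1}$, with the height estimate from Proposition \ref{first:prop},
\[
 H(V_i) \ll Y_{j-1}(i)^{1-\ell\lambda} \prod_{m=1}^{j} Y_{j-m}(i)^{-\lambda}\, Y_0(i)^{-e\lambda},
\]
where $e = d - \ell - j - 1 \ge 0$. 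Since $Y_0(i) \ge 1$ and $e \ge 0$, the factor $Y_0(i)^{-e\lambda}$ is at most $1$, and together with the bound $L_{i-1} \ll X_i^{-\lambda}$ from \eqref{minimal:eq:Li}, this produces \eqref{first:cor:T:eq}.

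Case (ii) is more delicate because $V_i = \bR^{n-\ell+1}$ for all large $i$, so Lemma \ref{P0:lemma:A} no longer detects non-containment at the level of $V_i$. Here $H(V_i) = 1$, and the exponent in Proposition \ref{first:prop} becomes $e = n - 2\ell - j$, which is at least $1$ thanks to the strict inequality $j + 2\ell < n$. The proposition then gives
\[
 1 \ll Y_{j-1}(i)^{1-\ell\lambda} \prod_{m=1}^{j} Y_{j-m}(i)^{-\lambda}\, Y_0(i)^{-(n-2\ell-j)\lambda}
\]
for all large $i$, and because $Y_0(i) = X_{i+1} \ge X_i$ with $n - 2\ell - j \ge 1$, the factor $Y_0(i)^{-(n-2\ell-j)\lambda}$ is at most $X_i^{-\lambda}$, so \eqref{first:cor:T:eq} holds for \emph{every} large $i$. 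To obtain the non-containment in this case, I would apply Lemma \ref{P0:lemma:A} instead to the sequence $V'_i := \UU^\ell(\ux_i)$: property $\cP(0, \ell)$, inherited from $\cP(j, \ell)$, forces $\dim V'_i = \ell + 1$, while $j \ge 1$ and $j + 2\ell < n$ give $\ell + 1 \le n - \ell$, so the lemma provides arbitrarily large $i$ with $\UU^\ell(\ux_{i-1}) \not\subseteq \UU^\ell(\ux_i)$; these $i$ will then satisfy both conclusions.

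The main obstacle is case (ii), where the spaces $V_i$ saturate $\bR^{n-\ell+1}$ and Lemma \ref{P0:lemma:A} cannot be invoked at that level. The trick that unblocks this is to exploit the strict inequality $j + 2\ell < n$: it forces $e \ge 1$ in Proposition \ref{first:prop}, so a single power of $Y_0(i)$ suffices to absorb the factor $X_i^{-\lambda}$ required in the target inequality, and the non-containment can be secured independently by descending to the smaller subspaces $\UU^\ell(\ux_i)$.
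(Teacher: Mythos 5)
Your proof is correct and follows essentially the same route as the paper: split on the generic dimension $d$ of $\UU^\ell(A_j(i))$, use Lemma~\ref{P0:lemma:A} and Lemma~\ref{P0:lemma:S} to produce the non-containment and the factor $L_{i-1}\ll X_i^{-\lambda}$, and use Proposition~\ref{first:prop} for the height bound. The only difference is where you draw the case boundary — you split at $d\le n-\ell$ versus $d=n-\ell+1$, whereas the paper splits at $d=\ell+j+1$ versus $d>\ell+j+1$ (obtaining $X_i^{-\lambda}$ from $Y_0(i)^{-e\lambda}$ with $e\ge 1$ in the second case) — but both divisions are equally valid and lead to the same estimates.
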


\begin{proof}
Let $d=\liminf_{i\to\infty}\dim\UU^\ell(A_j(i))$.  If $d=\ell+j+1$, then we
have $d\le n-\ell$ and Lemma \ref{P0:lemma:A} provides  infinitely many
$i\ge 1$ for which $\cU^\ell(\ux_{i-1})\not\subseteq \UU^\ell(A_j(i))$.
For those $i$, we have $\UU^\ell(\ux_{i-1})\not\subseteq\UU^\ell(\ux_i)$
and Lemma \ref{P0:lemma:S} gives $1\ll H(V)L_{i-1}$
with $V=\UU^\ell(A_j(i))$. Then \eqref{first:cor:T:eq} follows using
$L_{i-1}\ll X_i^{-\lambda}$ and the upper bound for $H(V)$ provided by
Proposition \ref{first:prop}.  Otherwise, for each sufficiently large $i$,
we have $\dim\UU^\ell(A_j(i)) > \ell+j+1$  and \eqref{first:cor:T:eq}
follows directly from Proposition \ref{first:prop} using
$H\big(\UU^\ell(A_j(i))\big)\ge 1$, $Y_0(i)=X_{i+1}\ge X_i$ and
$e\ge 1$.  Moreover, as $2\ell<n$, Lemma \ref{P0:lemma:A} also gives
$\UU^\ell(\ux_{i-1})\not\subseteq\UU^\ell(\ux_i)$ for infinitely many $i\ge 1$.
\end{proof}

%
%

\section{An alternative height estimate}
\label{sec:alt}

Keeping the same notation, we derive a second height estimate for $\UU^\ell(A_j(i))$ by
an indirect process, as in the proof of Lemma \ref{P0:lemma:S},
namely by writing this space as a sum of two subspaces with a well-chosen one
dimensional intersection, and then by applying Schmidt's height inequality
\eqref{heights:eq:schmidt}.

\begin{prop}
\label{alt:prop}
Suppose that $\cP(j,\ell)$ holds for some integers $1\le j\le \ell<n$.
For each $i\ge 0$, we have
\[
 H\big(\UU^\ell(A_j(i))\big)
   \ll H\big(A_j(i)\big) Y_{j}(i)^{-(\ell-j+1)\lambda}
       \prod_{m=1}^{j-1} Y_m(i)^{-\lambda}.
\]
\end{prop}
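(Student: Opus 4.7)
Following the hint at the start of Section~\ref{sec:alt}, my plan is to write $\UU^\ell(A_j(i))=U+V$ as a sum of two rational subspaces with one-dimensional intersection and apply Schmidt's inequality~\eqref{heights:eq:schmidt}. Set $q=\sigma_j(i)$ and take $V:=\UU^\ell(\ux_q)$; under $\cP(0,\ell)$ (a consequence of $\cP(j,\ell)$), we have $\dim V=\ell+1$ and, by \eqref{three:eq:HUx}, $H(V)\ll X_qL_q^\ell\ll X_qY_j(i)^{-\ell\lambda}$. Since $\dim\UU^\ell(A_j(i))\ge j+\ell+1>\ell+1$, Proposition~\ref{three:prop:avoiding} supplies a primitive vector $\ua\in\bZ^{\ell+1}$ of bounded norm such that $\tau:=\tau_\ua$ is injective on $A_j(i)$ with $\tau(A_j(i))\not\subseteq V$. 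Set $U:=\tau(A_j(i))$, of dimension $j+1$; the nonzero vector $\tau(\ux_q)=\sum_k a_k\ux_q^{(k,\ell)}$ lies in $U\cap V$, and in the generic case $\dim\UU^\ell(A_j(i))=j+\ell+1$ a dimension count forces $U+V=\UU^\ell(A_j(i))$ and $U\cap V=\Span{\tau(\ux_q)}$.

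Next I would establish three height estimates. First, $H(U)\ll H(A_j(i))$: since $\tau$ has bounded integer coefficients, its restriction to $A_j(i)$ has determinant $O(1)$, and the image lattice $\tau(A_j(i)\cap\bZ^{n+1})$ is a sublattice of $U\cap\bZ^{n-\ell+1}$, so a covolume comparison gives the claim. Second, $\norm{\tau(\ux_q)}\asymp X_q|P(\xi)|$ with $P(T):=\sum_k a_kT^k$; since $\xi$ is transcendental over $\bQ$ (of degree $>n\ge\ell$) and only finitely many $\ua$ satisfy the norm bound in Proposition~\ref{three:prop:avoiding}, the value $|P(\xi)|$ is bounded below by a positive constant, giving $H(U\cap V)\gg X_q/g$ where $g:=\gcd(\tau(\ux_q))$. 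The crucial third estimate is a refined bound $H(V)\ll X_qL_q^\ell/g$: the sublattice $L'\subset V\cap\bZ^{n-\ell+1}$ generated by $\pi_0(\ux_q),\ldots,\pi_\ell(\ux_q)$ has index at least $g$ in $V\cap\bZ^{n-\ell+1}$, because if $\ua$ is primitive and $g>1$, the primitive vector $\tau(\ux_q)/g$ cannot be written as an integer combination of the $\pi_k(\ux_q)$ (else $g$ would divide every $a_k$, contradicting $\gcd(\ua)=1$), so the cyclic group $\bZ\cdot(\tau(\ux_q)/g)$ injects into the quotient $(V\cap\bZ^{n-\ell+1})/L'$.

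Combining these estimates via Schmidt's inequality, the factors of $g$ cancel to yield $H(\UU^\ell(A_j(i)))\ll H(A_j(i))\,Y_j(i)^{-\ell\lambda}$ in the generic case. This bound is actually stronger than Proposition~\ref{alt:prop}: since $Y_m\le Y_j$ for each $m\le j$, one has $Y_j^{-\ell\lambda}\le Y_j^{-(\ell-j+1)\lambda}\prod_{m=1}^{j-1}Y_m^{-\lambda}$, so the sharper estimate implies the stated one. The non-generic case $\dim\UU^\ell(A_j(i))>j+\ell+1$ would require separate treatment, most naturally by invoking Proposition~\ref{three:prop:height} to reduce to a lower-dimensional situation where the generic argument applies, or by enlarging $V$ to include additional truncations so that $U+V$ again exhausts $\UU^\ell(A_j(i))$. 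The main obstacle I foresee is the rigorous verification of the sublattice index bound $[V\cap\bZ^{n-\ell+1}:L']\ge g$, where the primitivity of $\ua$ plays an essential role; a secondary obstacle is the clean handling of the non-generic dimension scenario.
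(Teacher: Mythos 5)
Your proposal shares the broad strategy of the paper's proof (decompose $\UU^\ell(A_j(i))$ as $U+V$ with a one-dimensional intersection, then apply Schmidt's inequality \eqref{heights:eq:schmidt}, with the content $g$ of $\tau_\ua(\ux_q)$ cancelling between $H(V)$ and $H(U\cap V)$), but there is a genuine gap in how you set up $U$ and $V$. You apply Proposition~\ref{three:prop:avoiding} with the target $V=\UU^\ell(\ux_q)$ (dimension $\ell+1$) and conclude that, in the case $\dim\UU^\ell(A_j(i))=j+\ell+1$, ``a dimension count forces $U+V=\UU^\ell(A_j(i))$ and $U\cap V=\Span{\tau(\ux_q)}$.'' This does not follow. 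Proposition~\ref{three:prop:avoiding} only gives $\tau_\ua(A_j(i))\not\subseteq V$, hence $\dim(U\cap V)\le j$; it gives no upper bound of $1$ when $j\ge 2$. If $\dim(U\cap V)\ge 2$ then $\dim(U+V)\le j+\ell$, so $U+V$ is a proper subspace of $\UU^\ell(A_j(i))$ and Schmidt's inequality bounds the height of the wrong space. Nothing in the hypotheses forces the favourable case, and you do not argue that a good choice of $\ua$ exists.

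Relatedly, the bound you would obtain, $H(\UU^\ell(A_j(i)))\ll H(A_j(i))\,Y_j(i)^{-\ell\lambda}$, is strictly stronger than the stated one (as you observe), and this is a warning sign: the factor $\prod_{m=1}^{j-1}Y_m(i)^{-\lambda}$ in the statement is not slack. The reason it appears is that one cannot, in general, choose a complement of $U=\tau_\ua(A_j(i))$ inside $\UU^\ell(A_j(i))$ consisting solely of truncations of $\ux_q$; one is forced to take vectors coming from earlier minimal points $\ux_h$ with $h<q$, whose $L_\xi$-values are only $\ll Y_m(i)^{-\lambda}$ for various $m<j$. The paper handles this by applying Proposition~\ref{three:prop:avoiding} with a carefully chosen target $W$ of dimension $\ell+j$ (spanned by the first $\ell+j$ vectors of the basis from Lemma~\ref{first:lemma:basis}), which guarantees $\dim(\tau_\ua(A)\cap W)\le j$ and lets one extract $\ell$ basis vectors spanning a space meeting $\tau_\ua(A)$ trivially; the space $V$ is then built to be a genuine complement of $U$ modulo $\langle\uz\rangle$, making $U\cap V=\langle\uz\rangle$ automatic rather than hoped for. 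Finally, your remark that the non-generic case $\dim\UU^\ell(A_j(i))>j+\ell+1$ could be handled ``by invoking Proposition~\ref{three:prop:height}'' is off target: that proposition concerns the opposite, degenerate, case $\dim\UU^\ell(A)\le j+\ell$. The higher-dimensional case is the easy one, handled in the paper simply by adjoining extra truncations $\ux_h^{(s,\ell)}$ to the complement (each contributing an extra harmless factor $Y_0(i)^{-\lambda}$ that is dropped). Your content-index argument for $[V\cap\bZ^{n-\ell+1}:L']\ge g$ is correct as far as it goes, and parallels the paper's use of $H(U\cap V)\asymp g^{-1}X_q$ and $H(V)\le g^{-1}\|\uz\wedge\uz_0\wedge\cdots\|$; but it cannot rescue the argument without the missing control on $U\cap V$.
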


The main feature of this estimate is that it involves a negative power of $Y_j(i)$
and so, as we will see, it yields an upper bound for $Y_j(i)$ in terms of
$Y_0(i),\dots,Y_{j-1}(i)$.   The proof requires the following simple observation.

\begin{lemma}
\label{alt:lemma}
Let $\tau_\ua\colon\bR^{n+1}\to\bR^{n-\ell+1}$ be the linear map
given by \eqref{three:eq:tau} for fixed $\ell\in\{0,\dots,n\}$
and $\ua=(a_0,\dots,a_\ell)\in \bZ^{\ell+1}\setminus\{0\}$.
Then we have $\norm{\tau_\ua(\ux_i)}\asymp X_i$ as $i\to\infty$.
\end{lemma}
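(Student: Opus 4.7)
The plan is to expand each truncated point $\ux_i^{(k,\ell)}$ as a scalar multiple of $\Xi_{n-\ell} = (1,\xi,\dots,\xi^{n-\ell})$ plus a small error, and then to combine these via the linear combination defining $\tau_\ua$. Writing $x_{i,0}$ for the first coordinate of $\ux_i$, the observation is that for every $k\in\{0,\dots,\ell\}$ the $j$-th coordinate of $\ux_i^{(k,\ell)}-x_{i,0}\xi^k\,\Xi_{n-\ell}$ equals $x_{i,k+j}-x_{i,0}\xi^{k+j}$, whose absolute value is bounded by $L_i$ (or is zero when $k=j=0$). Hence
\[
 \ux_i^{(k,\ell)} = x_{i,0}\xi^k\,\Xi_{n-\ell} + \Delta_{i,k}, \qquad \norm{\Delta_{i,k}}\ll L_i,
\]
with implied constant depending only on $n$. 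Summing with the coefficients $a_k$ gives
\[
 \tau_\ua(\ux_i) = x_{i,0}\,P(\xi)\,\Xi_{n-\ell} + E_i,
 \quad\text{where}\quad
 P(T)=\sum_{k=0}^\ell a_k T^k,\quad \norm{E_i}\ll L_i.
\]

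The upper bound $\norm{\tau_\ua(\ux_i)}\ll X_i$ is immediate from \eqref{three:eq:xkl} and the triangle inequality, with implied constant $\sum_k|a_k|$. For the lower bound, I would note first that since $\ua\neq 0$, the polynomial $P$ is non-zero of degree at most $\ell\le n$, and the hypothesis $[\bQ(\xi):\bQ]>n$ of the paper then forces $P(\xi)\neq 0$. Next, from $\ux_i=x_{i,0}\Xi_n+\Delta_i$ with $\norm{\Delta_i}\ll L_i$ and $L_i\to 0$, I would deduce that $|x_{i,0}|\asymp X_i$ for all sufficiently large $i$ (the constant $\norm{\Xi_n}$ only being meaningful once $L_i$ is small compared to $X_i$). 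Therefore
\[
 \norm{\tau_\ua(\ux_i)} \ge |x_{i,0}|\,|P(\xi)|\,\norm{\Xi_{n-\ell}} - \norm{E_i}
 \gg X_i - O(L_i) \gg X_i,
\]
again for $i$ large, which combined with the upper bound yields $\norm{\tau_\ua(\ux_i)}\asymp X_i$.

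Essentially the entire argument is routine once the decomposition is written down; the only point that requires care, and the one I would flag as the main obstacle, is justifying $P(\xi)\neq 0$. That step genuinely uses the standing assumption $[\bQ(\xi):\bQ]>n$, without which the conclusion of the lemma would fail (e.g.\ if $\xi$ were a root of $P$, the leading term would vanish and only the $O(L_i)$ error would remain, breaking the lower bound). Everything else reduces to the triangle inequality and the elementary estimates already recorded in \eqref{three:eq:xkl} together with the standard relation between $\norm{\ux_i}$ and $|x_{i,0}|$ implied by $L_i=o(1)$.
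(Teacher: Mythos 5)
Your proof is correct and uses essentially the same idea as the paper: the paper simply packages the decomposition $\ux_i = x_{i,0}\Xi_n + \Delta_i$ with $\norm{\Delta_i}\ll L_i$ into the single statement $\lim_{i\to\infty}X_i^{-1}\ux_i = \norm{\Xi}^{-1}\Xi$, applies the continuous linear map $\tau_\ua$, and then, exactly as you do, uses $[\bQ(\xi):\bQ]>n\ge\ell$ to conclude that $\tau_\ua(\Xi)=(a_0+a_1\xi+\cdots+a_\ell\xi^\ell)\,\Xi_{n-\ell}$ is non-zero. Your version is a more explicit unpacking of the same argument, and your flag that $P(\xi)\neq 0$ is the one genuinely non-trivial point matches where the paper invokes the degree hypothesis.
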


\begin{proof}
Since $\lim_{i\to\infty} X_i^{-1}\ux_i =\norm{\Xi}^{-1}\Xi$ where
$\Xi=(1,\xi,\dots,\xi^n)$, we find
\[
 \lim_{i\to\infty} X_i^{-1}\tau_\ua(\ux_i)
    =\norm{\Xi}^{-1}\tau_\ua(\Xi)
    =\norm{\Xi}^{-1}(a_0+a_1\xi+\cdots+a_\ell\xi^\ell)(1,\xi,\dots,\xi^{n-\ell}).
\]
As $[\bQ(\xi):\bQ]>n\ge \ell$,  this limit is non-zero, and the conclusion follows.
\end{proof}

\begin{proof}[Proof of Proposition \ref{alt:prop}]
We may assume that $i$ is large enough so that Lemma \ref{first:lemma:basis}
applies.  Choose $q\ge i$ maximal such that
$A_j(i)=\langle\ux_i,\dots,\ux_q\rangle$, and to simplify notation set
$A=A_j(i)$.  Then, by definition, we have $X_{q+1}=Y_j(i)$.  Moreover,
we have $j+2\ell\le n$ by $\cP(j,\ell)$ (see Proposition \ref{minimal:prop:Pjl}),
thus $\dim(A) =j+1\le n$, and so $A$ is a proper subspace of $\bR^{n+1}$.
Using the basis of $\cU^\ell(A)$ provided by Lemma \ref{first:lemma:basis}
for the present choice of $q$, we set
\[
 W=\langle \uy_0,\uy_1,\dots,\uy_{\ell+j-1}\rangle
   \subseteq \bR^{n+1-\ell}.
\]
Since $\dim(W)=\ell+j<\dim\cU^\ell(A)$, we have $\cU^\ell(A)\not\subseteq W$.
As $A$ is a proper subspace of $\bR^{n+1}$, Proposition \ref{three:prop:avoiding}
provides a non-zero
point $\ua=(a_0,\dots,a_\ell)\in\bZ^{\ell+1}$ with $\|\ua\|\ll 1$ such that
the linear map $\tau_\ua\colon\bR^{n+1}\to\bR^{n+1-\ell}$ is injective
on $A$ with $\tau_\ua(A)\not\subseteq W$. Thus we have
$\dim(\tau_\ua(A)\cap W)  \le j$ and so there are $\ell$ points
$\uz_0,\dots,\uz_{\ell-1}$ among $\uy_0,\uy_1,\dots,\uy_{\ell+j-1}$ such
that
\[
  \tau_\ua(A) \cap \langle \uz_0,\dots,\uz_{\ell-1} \rangle = 0.
\]
By construction, these are integer points of norm $\le X_q$ and, since $j\le \ell$,
we may order them so that
\begin{equation}
 \label{lemma:D:eq1}
\begin{cases}
  L_\xi(\uz_m)\ll X_{q+1}^{-\lambda}=Y_{j}(i)^{-\lambda}
       &\text{for $0\le m\le \ell-j$,}\\
  L_\xi(\uz_{\ell-j+m})\ll Y_{j-m}(i)^{-\lambda}
       &\text{for $1\le m\le j-1$.}
\end{cases}
\end{equation}
Since $\dim \cU^\ell(A) =\ell+j+1+e$ for some integer $e\ge 0$,
we may complete $\{\uz_0,\dots,\uz_{\ell-1}\}$ to a maximal linearly
independent subset $\{\uz_0,\dots,\uz_{\ell-1+e}\}$ of $\cU^\ell(A)$
such that
\[
  \cU^\ell(A)
  = \tau_\ua(A) \oplus \langle \uz_0,\dots,\uz_{\ell-1+e} \rangle
\]
by adding integer points of the form $\ux_h^{(s,\ell)}$ with $i\le h\le q$
and $0\le s\le \ell$.  These new points have norm $\le X_q$ and satisfy
\begin{equation}
 \label{lemma:D:eq2}
  L_\xi(\uz_{\ell-1+m}) \ll L_i \ll Y_0(i)^{-\lambda}
   \quad \text{for $1\le m\le e$.}
\end{equation}
Define
\[
  U=\tau_\ua(A), \quad \uz=\tau_\ua(\ux_q)
  \et V=\langle \uz,\uz_0,\dots,\uz_{\ell-1+e} \rangle,
\]
so that
\[
 U+V=\cU^\ell(A)
 \et
 U\cap V=\Span{\uz}.
\]
Since $\norm{\ua}\ll 1$, we find
\[
  H(U)\ll H(A), \quad \norm{\uz}\asymp X_q
  \et  L_\xi(\uz) \ll L_q\ll X_{q+1}^{-\lambda}=Y_j(i)^{-\lambda},
\]
where the middle estimate $\norm{\uz}\asymp X_q$ comes from
Lemma \ref{alt:lemma}.  We deduce that
\[
 H(U\cap V)=g^{-1}\|\uz\|\asymp g^{-1}X_q
\]
where $g$ denotes the content of $\uz$.  Since $g^{-1}\uz$ is an integer point,
we further have
\[
  H(V) \le \|g^{-1}\uz\wedge\uz_0\wedge\cdots\wedge\uz_{\ell-1+e}\|
      = g^{-1}\|\uz\wedge\uz_0\wedge\cdots\wedge\uz_{\ell-1+e}\|.
\]
Applying Lemma \ref{heights:lemma} with the estimates \eqref{lemma:D:eq1},
\eqref{lemma:D:eq2}, this implies
\begin{align*}
  H(V)
    &\ll g^{-1} X_q Y_j(i)^{-(\ell-j+1)\lambda}
       \Big(\prod_{m=1}^{j-1} Y_{j-m}(i)^{-\lambda}\Big) Y_0(i)^{-e\lambda}\\
    &\ll H(U\cap V) Y_j(i)^{-(\ell-j+1)\lambda}
       \prod_{m=1}^{j-1} Y_{j-m}(i)^{-\lambda}.
\end{align*}
The conclusion follows since $H(\cU^\ell(A))\ll H(U)H(V)/H(U\cap V)$
by \eqref{heights:eq:schmidt}.
\end{proof}

Combining Proposition \ref{alt:prop} with the crude estimate
$H\big(\UU^\ell(A_j(i))\big)\ge 1$ and the upper bound for $H(A_j(i))$
given by Corollary \ref{first:cor:HA}, we obtain the following upper bound
for $Y_j(i)$.

\begin{corollary}
\label{alt:cor:E}
Suppose that $\cP(j,\ell)$ holds for some integers $1\le j\le \ell<n$.
Then, for each $i\ge 0$, we have
\[
  Y_{j}(i)^{(\ell-j+1)\lambda}
      \ll Y_{j-1}(i) \Big(\prod_{m=1}^{j-1} Y_m(i)^{-2\lambda} \Big)
          Y_0(i)^{-\lambda}.
\]
\end{corollary}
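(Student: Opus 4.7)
The approach is essentially a direct combination of the three ingredients that have just been established. The plan is to start from the upper bound on $H\big(\UU^\ell(A_j(i))\big)$ given by Proposition \ref{alt:prop}, apply to its left-hand side the trivial lower bound $H\big(\UU^\ell(A_j(i))\big) \ge 1$ (valid because this is a nonzero subspace of $\bR^{n-\ell+1}$ defined over $\bQ$), and then eliminate the factor $H(A_j(i))$ appearing on the right using Corollary \ref{first:cor:HA}. The result should then follow by simple algebraic manipulation.

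More concretely, the first step is to rewrite Proposition \ref{alt:prop} in the form
\[
 Y_j(i)^{(\ell-j+1)\lambda} \ll H\big(A_j(i)\big) \prod_{m=1}^{j-1} Y_m(i)^{-\lambda},
\]
by moving $Y_j(i)^{-(\ell-j+1)\lambda}$ to the left side and using $H\big(\UU^\ell(A_j(i))\big) \ge 1$. The second step is to substitute the bound
\[
 H(A_j(i)) \ll Y_{j-1}(i) \prod_{m=0}^{j-1} Y_m(i)^{-\lambda}
\]
from Corollary \ref{first:cor:HA} (applicable since $\cP(j,0)$ always holds). Separating the $m=0$ factor $Y_0(i)^{-\lambda}$ and merging the remaining product $\prod_{m=1}^{j-1} Y_m(i)^{-\lambda}$ with the existing one produces the claimed exponent $-2\lambda$ for each $Y_m(i)$ with $1 \le m \le j-1$, and leaves the single factor $Y_{j-1}(i)$ on top and $Y_0(i)^{-\lambda}$ on the bottom.

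There is really no serious obstacle at this stage: the hard work is packaged in Proposition \ref{alt:prop} (the choice of a generic projection $\tau_\ua$, the splitting of $\UU^\ell(A_j(i))$ as $U\oplus\langle \uz_1,\dots \rangle$ modulo the one-dimensional intersection $\Span{\uz}$, and the application of Schmidt's inequality \eqref{heights:eq:schmidt}) and in Corollary \ref{first:cor:HA} (the basis construction of Lemma \ref{first:lemma:basis}). The only thing to double-check is the bookkeeping of indices, namely that Proposition \ref{alt:prop} is applied with the same $j$ and $\ell$ as in the corollary, and that the hypothesis $1 \le j \le \ell < n$ is exactly what is needed to invoke it (in particular $j \le \ell$ is what allows the first $\ell - j + 1$ of the points $\uz_m$ to carry the sharp estimate $L_\xi(\uz_m) \ll Y_j(i)^{-\lambda}$, which produces the exponent $\ell - j + 1$ on the left).
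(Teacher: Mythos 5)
Your proposal is correct and follows exactly the same route as the paper, which itself just states that the corollary results from ``combining Proposition \ref{alt:prop} with the crude estimate $H\big(\UU^\ell(A_j(i))\big)\ge 1$ and the upper bound for $H(A_j(i))$ given by Corollary \ref{first:cor:HA}.'' Your bookkeeping of the $m=0$ factor and the merging of the two products into the exponent $-2\lambda$ matches the paper's computation.
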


When $j=1$, this can be reformulated as follows.

\begin{corollary}
\label{alt:cor:F}
Suppose that $\cP(1,\ell)$ holds for some integer $1\le \ell<n$.
Then the ratio
\begin{equation}
  \label{eq:theta}
  \theta=\frac{\ell\lambda}{1-\lambda}
\end{equation}
satisfies $0<\theta\le 1$ and we have $Y_1(i)^\theta\ll Y_0(i)$ and
$Y_0(i)^\theta\ll Y_{-1}(i)$ for each $i\ge 0$.
\end{corollary}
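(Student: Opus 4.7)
The conclusion decomposes into three parts: the range $0 < \theta \le 1$, the estimate $Y_1(i)^\theta \ll Y_0(i)$, and the estimate $Y_0(i)^\theta \ll Y_{-1}(i)$. I would handle them in that order, each by directly invoking a result already established in the paper.

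Positivity of $\theta$ is immediate from $\lambda > 0$ together with $\lambda < 1$, the latter having been recorded at the start of Section \ref{sec:P0}. For the upper bound $\theta \le 1$, I would combine Lemma \ref{P1:lemma1} with the trivial inequality $H(\UU^\ell(\ux_i,\ux_{i+1})) \ge 1$: this forces $1-(\ell+1)\lambda\ge 0$, hence $\lambda \le 1/(\ell+1)$, and then $\theta = \ell\lambda/(1-\lambda) \le 1$ by direct computation. For the first height inequality, I would specialize Corollary \ref{alt:cor:E} to $j = 1$: the middle product $\prod_{m=1}^{0}$ is empty, so the conclusion collapses to $Y_1(i)^{\ell\lambda} \ll Y_0(i)\cdot Y_0(i)^{-\lambda}=Y_0(i)^{1-\lambda}$, and raising both sides to $1/(1-\lambda)$ yields $Y_1(i)^\theta \ll Y_0(i)$.

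The second inequality, which reads $X_{i+1}^\theta \ll X_i$, requires slightly more care. I would start from the first estimate of Lemma \ref{P1:lemma2}, which combined with $H(\UU^\ell(\ux_i,\ux_{i+1}))\ge 1$ gives $X_{j+1}^\theta \ll X_{i+1}$ for every pair $i < j$ of consecutive elements of $I$. The main obstacle is to upgrade this to the pointwise statement for an arbitrary index $i_0 \ge 0$. I expect to handle this by casework, using both the monotonicity of $(X_k)_{k\ge 0}$ and the monotonicity of $x\mapsto x^\theta$ (valid since $\theta > 0$): if $i_0 \notin I$ lies strictly between two consecutive elements $i < j$ of $I$, then $X_{i_0+1}^\theta \le X_{j+1}^\theta \ll X_{i+1} \le X_{i_0}$; if $i_0 \in I$, one instead applies the estimate to the pair consisting of the predecessor of $i_0$ in $I$ and $i_0$ itself, which yields $X_{i_0+1}^\theta \ll X_{i'+1}\le X_{i_0}$ where $i'$ is that predecessor. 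The finitely many indices preceding the first element of $I$ contribute only to the implied constant.
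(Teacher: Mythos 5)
Your proposal is correct, but it takes a genuinely different route from the paper's on two of the three points, and the paper's route is considerably shorter. For $\theta\le 1$: you derive it from Lemma~\ref{P1:lemma1}, whereas the paper derives it directly from the inequality $Y_1(i)^\theta\ll Y_0(i)$ itself (once this is established via Corollary~\ref{alt:cor:E}, the fact that $Y_1(i)\ge Y_0(i)\to\infty$ forces $\theta\le 1$). Both work, but the paper's derivation is self-contained within the corollary and makes the whole conclusion flow from a single application of Corollary~\ref{alt:cor:E}. For the third inequality $Y_0(i)^\theta\ll Y_{-1}(i)$: you detour through Lemma~\ref{P1:lemma2} and a case analysis on the position of the index relative to consecutive elements of $I$. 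This works (you are essentially re-deriving the ``a fortiori'' remark made after Lemma~\ref{P1:lemma2}, and your case analysis correctly fills in what that remark glosses over), but the paper instead uses a short shift-by-one trick that avoids $I$ entirely: since $\ux_{i-1},\ux_i$ are always linearly independent, one has $Y_1(i-1)\ge X_{i+1}=Y_0(i)$, so for $i\ge 1$,
\[
 Y_0(i)^\theta = X_{i+1}^\theta \le Y_1(i-1)^\theta \ll Y_0(i-1) = X_i = Y_{-1}(i),
\]
applying the first inequality at index $i-1$. This is cleaner and needs only the first inequality plus the trivial comparison $Y_0(i)\le Y_1(i-1)$; it also avoids any reliance on Lemma~\ref{P1:lemma2} (and hence on its hypothesis $n\ge 2$, though in the present setting $1\le\ell<n$ forces $n\ge 2$ anyway). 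You might keep the shift-by-one observation in mind: the same mechanism, $Y_m(i)\le Y_{m+1}(i-1)$, underlies several of the growth estimates later in the paper.
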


\begin{proof}
By Corollary \ref{alt:cor:E}, we have $Y_1(i)^{\ell\lambda}\ll
Y_0(i)^{1-\lambda}$ for each $i\ge 0$.  Since $\lambda<1$,
this yields $Y_1(i)^\theta\ll Y_0(i)$ for all $i\ge 0$ and thus
$\theta\le 1$.  For $i\ge 1$, this in turn gives
\[
  Y_0(i)^\theta=X_{i+1}^\theta\le Y_1(i-1)^\theta
    \ll Y_0(i-1)=X_i=Y_{-1}(i).
\qedhere
\]
\end{proof}

More generally, Corollary \ref{alt:cor:E} admits the following consequence.

\begin{cor}
\label{alt:cor:G}
Suppose that $\cP(j,\ell)$ holds for some integers $1\le j\le \ell<n$,
and that $\theta^{j-1}+\theta^j\ge 1$ where $\theta$ is given by \eqref{eq:theta}.
Then we have $Y_m(i)^\theta\ll Y_{m-1}(i)$ for each each $i\ge 0$ and
each $m=0,1,\dots,j$.
\end{cor}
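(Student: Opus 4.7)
The plan is to proceed by induction on $m\in\{0,1,\dots,j\}$. Since $\cP(j,\ell)$ holds, Proposition~\ref{minimal:prop:Pjl} yields $\cP(m',\ell)$ for every $m'\le j$; in particular $\cP(1,\ell)$ holds, so Corollary~\ref{alt:cor:F} applies and gives both the base cases $Y_0(i)^\theta\ll Y_{-1}(i)$ and $Y_1(i)^\theta\ll Y_0(i)$ together with the key auxiliary fact $\theta\le 1$. It remains to handle the inductive step for $m\ge 2$: fix such an $m$ with $m\le j$, and invoke Corollary~\ref{alt:cor:E} at the index $m$ (valid since $\cP(m,\ell)$ holds and $m\le j\le \ell$) to obtain
\[
 Y_m(i)^{(\ell-m+1)\lambda}\ll Y_{m-1}(i)\prod_{k=1}^{m-1}Y_k(i)^{-2\lambda}\cdot Y_0(i)^{-\lambda}.
\]

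Iterating the induction hypothesis $Y_{k+1}(i)^\theta\ll Y_k(i)$ gives $Y_k(i)\gg Y_{m-1}(i)^{\theta^{m-1-k}}$ for $0\le k\le m-1$. Substituting these lower bounds on the right-hand side and collecting powers yields
\[
 Y_m(i)^{(\ell-m+1)\lambda}\ll Y_{m-1}(i)^{1-2\lambda S_m-\lambda\theta^{m-1}},
\]
where $S_m=1+\theta+\cdots+\theta^{m-2}$. Since $Y_{m-1}(i)\ge 1$, the desired inequality $Y_m(i)^\theta\ll Y_{m-1}(i)$ follows from this provided that
\[
 \theta\bigl(1-2\lambda S_m-\lambda\theta^{m-1}\bigr)\le(\ell-m+1)\lambda.
\]
Using the defining relation $\theta(1-\lambda)=\ell\lambda$, a short algebraic manipulation reduces this to $2\theta S_m+\theta^m-\theta\ge m-1$.

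The final step is a telescoping identity: expanding termwise one finds
\[
 2\theta S_m+\theta^m-\theta=\sum_{k=1}^{m-1}\theta^k(1+\theta).
\]
Since $\theta\le 1$ and $1\le k\le m-1\le j-1$, each summand satisfies $\theta^k(1+\theta)\ge\theta^{j-1}(1+\theta)=\theta^{j-1}+\theta^j\ge 1$ by the standing hypothesis, so the sum of these $m-1$ terms is at least $m-1$, closing the induction. The main obstacle is the exponent bookkeeping in the substitution step: one must arrange the computation carefully so that the telescoping identity becomes visible, thereby revealing why $\theta^{j-1}+\theta^j\ge 1$ is exactly the right sharpness condition for the hypothesis.
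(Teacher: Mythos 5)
Your proof is correct and follows essentially the same route as the paper: both do an induction anchored in Corollary~\ref{alt:cor:F}, feed the induction hypothesis into the bound from Corollary~\ref{alt:cor:E}, and then close using the hypothesis $\theta^{j-1}+\theta^j\ge 1$ via the same telescoping regrouping of the exponent (you isolate it as the identity $2\theta S_m+\theta^m-\theta=\sum_{k=1}^{m-1}\theta^k(1+\theta)$, while the paper writes $\rho=1-\lambda-\lambda\sum_{m=1}^{j-1}(\theta^{m-1}+\theta^m)$ and bounds each summand by $1/\theta$). The only cosmetic difference is that you induct on $m$ within a fixed $j$, whereas the paper inducts on $j$ and invokes the $j-1$ case wholesale; these are equivalent.
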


\begin{proof}
Since $\cP(1,\ell)$ holds, Corollary \ref{alt:cor:F} gives $0<\theta\le 1$
and $Y_m(i)^\theta\ll Y_{m-1}(i)$ for $m=0,1$.  So, we are done if $j=1$.
Suppose now that $j\ge 2$.  Since $\theta\le 1$, the hypothesis
$\theta^{j-1}+\theta^j\ge 1$ implies that $\theta^{j-2}+\theta^{j-1}\ge 1$.
Since $\cP(j-1,\ell)$ holds, we may
assume by induction that $Y_m(i)^\theta\ll Y_{m-1}(i)$ for each $i\ge 0$
and each $m=0,\dots,j-1$.  Then, using Corollary \ref{alt:cor:E},
we obtain $Y_j(i)^{(\ell-j+1)\lambda} \ll Y_{j-1}(i)^\rho$ where
\begin{align*}
  \rho
   &= 1 - 2 \lambda\Big(\sum_{m=1}^{j-1} \theta^{j-1-m}\Big)
                  - \lambda\theta^{j-1}\\
   &= 1 - \lambda - \lambda \sum_{m=1}^{j-1} (\theta^{m-1}+\theta^m)
     \le \frac{\ell\lambda}{\theta} - \lambda \sum_{m=1}^{j-1} \frac{1}{\theta}
     = \frac{(\ell-j+1)\lambda}{\theta},
\end{align*}
and so $Y_j(i)^\theta \ll Y_{j-1}(i)$.
\end{proof}

%
%

\section{Main proposition and proof of Theorem \ref{intro:thm:main}}
\label{sec:main}

We first prove the following general statement and then optimize the choice of parameters
to deduce Theorem \ref{intro:thm:main}.  The notation, including the choice of $\lambda$
is as in the preceding sections.

\begin{prop}
\label{main:prop}
Let $1\le k\le \ell$ be integers with $k+2\ell=n$.  Suppose that
\[
   \frac{1}{2}\le \theta^k \le 1
     \quad\text{where}\quad
  \theta=\frac{\ell\lambda}{1-\lambda}.
\]
Then we have
$\lambda \le \eta^{-1}$ where $\eta=\ell+\theta+\theta^2+\cdots+\theta^{k+1}$.
\end{prop}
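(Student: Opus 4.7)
The plan is to argue by contradiction: assume $\hlambda_n(\xi) > 1/\eta$ and pick $\lambda$ with $1/\eta < \lambda < \hlambda_n(\xi)$. From the hypothesis $\theta\le 1$ I get $\eta\le \ell+k+1 = n-\ell+1$, so $\lambda > 1/(n-\ell+1)$; combined with $\ell\le n/2$ (which follows from $k\le\ell$), Proposition \ref{P0:prop:BS} gives the starting property $\cP(0,\ell)$. The condition $\theta^{j-2}+\theta^{j-1}\ge 1$ needed to invoke Corollary \ref{alt:cor:G} at each stage is automatic, because $\theta^k\ge 1/2$ and $\theta\le 1$ force $\theta^{j-1}\ge\theta^k\ge 1/2$ for every $j\le k+1$.

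Phase 1 (establishing $\cP(k,\ell)$): I induct on $j=1,\dots,k$. Given $\cP(j-1,\ell)$, Proposition \ref{minimal:prop:Pjl} supplies $\cP(j,\ell-1)$. If $\cP(j,\ell)$ were to fail, Proposition \ref{P0:prop:I} would produce arbitrarily large $i$ with $1\ll H(\UU^{\ell-1}(A_j(i)))\,L_{i-1}^{k-j+2}$. Combining with the height bound from Proposition \ref{first:prop} (worst case $e=0$), the growth bounds $Y_m(i)\ll X_i^{1/\theta^{m+1}}$ from iterated Corollary \ref{alt:cor:G}, and the sharper chain $\mu_m\ge\theta^{j-1-m}\mu_{j-1}$ (coming from $\mu_{m-1}\ge\theta\mu_m$ iterated backward), taking logarithms yields
\[
 \lambda\bigl(\ell+\theta+\theta^2+\cdots+\theta^{j-1}+(k-j+2)\theta^j\bigr)\le 1.
\]
Since $\theta\le 1$ gives $\theta^{j+s}\le\theta^j$ for $s=1,\dots,k+1-j$, the sum $\theta^{j+1}+\cdots+\theta^{k+1}\le(k+1-j)\theta^j=(k-j+1)\theta^j$, and so the coefficient on the left is $\ge\eta$. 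This would give $\lambda\le 1/\eta$, contradicting $\lambda>1/\eta$; hence $\cP(j,\ell)$ must hold, and the induction reaches $\cP(k,\ell)$.

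Phase 2 (extracting the bound from $\cP(k,\ell)$): Proposition \ref{minimal:prop:Pjl} promotes $\cP(k,\ell)$ to $\cP(k+1,\ell-1)$. The crucial move is to apply Corollary \ref{first:cor:T} at $j=k+1$, $\ell'=\ell-1$ (which is legal since $(k+1)+2(\ell-1)=n-1<n$), producing arbitrarily large $i$ with
\[
 1\ll Y_k(i)^{1-\ell\lambda}\prod_{m=0}^{k-1}Y_m(i)^{-\lambda}\cdot X_i^{-\lambda}.
\]
With $\mu_m=\log Y_m(i)/\log X_i$, Corollary \ref{alt:cor:G} applied to $\cP(k,\ell)$ gives $\mu_k\le 1/\theta^{k+1}$ and $\mu_m\ge\theta^{k-m}\mu_k$ for $m=0,\dots,k$. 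Substituting $\sum_{m=0}^{k-1}\mu_m\ge\mu_k(\theta+\theta^2+\cdots+\theta^k)$ into the logarithmic form gives $\bigl(1-\ell\lambda-\lambda(\theta+\cdots+\theta^k)\bigr)\mu_k\ge\lambda$. Positivity of the left-hand coefficient is automatic (otherwise the left side is nonpositive while $\lambda>0$), and combining with $\mu_k\le 1/\theta^{k+1}$ gives $\lambda\theta^{k+1}\le 1-\ell\lambda-\lambda(\theta+\cdots+\theta^k)$, i.e., $\lambda\,\eta\le 1$, contradicting $\lambda>1/\eta$.

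The hardest part is identifying the correct form of Phase 2: one cannot apply Corollary \ref{first:cor:T} directly at $j=k$, because $k+2\ell=n$ violates its strict dimension hypothesis. Shifting to $j=k+1$, $\ell'=\ell-1$ through the free implication $\cP(k,\ell)\Rightarrow\cP(k+1,\ell-1)$ is what introduces the extra factor $Y_k(i)^{-\lambda}$ into the product; combined with the upper bound $\mu_k\le 1/\theta^{k+1}$, this contributes exactly the terminating term $\theta^{k+1}$ that completes the geometric sum defining $\eta$. Verifying that no room is lost along the way relies on the full strength of the chain inequalities from Corollary \ref{alt:cor:G}, which is where the hypothesis $\theta^k\ge 1/2$ is essential.
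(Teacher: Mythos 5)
Your argument is substantially the paper's own: after assuming $\lambda>\eta^{-1}$ and invoking Proposition~\ref{P0:prop:BS} to get $\cP(0,\ell)$, you climb from $\cP(0,\ell)$ to $\cP(k,\ell)$ by contradiction through Proposition~\ref{P0:prop:I} plus Proposition~\ref{first:prop}, then finish with Corollary~\ref{first:cor:T} at $(k+1,\ell-1)$ and the chain $\mu_k\le 1/\theta^{k+1}$. The arithmetic in both phases is right, including the key counting observation that $(k-j+2)\theta^{j}\ge\theta^j+\theta^{j+1}+\cdots+\theta^{k+1}$, and the shift to $(k+1,\ell-1)$ in Phase~2 is exactly what the paper does. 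The paper writes Phase~1 by taking the \emph{maximal} $j$ with $\cP(j,\ell)$ rather than by induction, but the two formulations are equivalent.

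There is one small but genuine gap in the base case $j=1$ of your Phase~1 induction. There you only have $\cP(0,\ell)$, and Corollary~\ref{alt:cor:G} (which you cite as the source of \emph{all} the growth bounds) cannot be invoked: it requires $\cP(j',\ell)$ with $j'\ge 1$, and the same is true of Corollary~\ref{alt:cor:F}. In particular, the inequality $Y_0(i)^\theta\ll Y_{-1}(i)=X_i$ that you need to convert $1\ll Y_0(i)^{1-\ell\lambda}X_i^{-(k+1)\lambda}$ into $\lambda(\ell+(k+1)\theta)\le 1$ has no stated source at this step. The paper's fix is Lemma~\ref{P0:lemma:Fbis}, which delivers exactly $X_{i+1}^\theta\ll X_i$ under $\cP(0,\ell)$, but \emph{only} for those $i$ with $\cU^\ell(\ux_{i-1})\not\subseteq\cU^\ell(\ux_i)$; fortunately, the $i$'s supplied by Proposition~\ref{P0:prop:I} satisfy $\cU^\ell(\ux_{i-1})\not\subseteq\UU^\ell(A_1(i))\supseteq\cU^\ell(\ux_i)$, so they fall into this class. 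You should add this appeal to Lemma~\ref{P0:lemma:Fbis} (together with the observation about where the special $i$'s come from) to close the $j=1$ case; once that is in place the rest of the induction and Phase~2 go through as you describe.
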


\begin{proof}
Assume on the contrary that $\lambda>\eta^{-1}$.  Since $\theta\le 1$,
we have $\eta\le \ell+k+1=n-\ell+1$, thus $\lambda>(n-\ell+1)^{-1}$,
and so Proposition \ref{P0:prop:BS} shows that
$\cP(0,\ell)$ holds.  By Lemma \ref{P0:lemma:Fbis}, this implies that
$\cU^\ell(\ux_{i-1})\not\subseteq \cU^\ell(\ux_i)$ for infinitely many
$i\ge 1$, and that
\begin{equation}
  \label{main:prop:eq1}
  Y_0(i)^\theta=X_{i+1}^\theta\ll Y_{-1}(i)=X_i
  \quad
  \text{whenever}
  \quad
  \cU^\ell(\ux_{i-1})\not\subseteq \cU^\ell(\ux_i).
\end{equation}

Let $j$ be the largest integer with $0\le j\le k$ for which $\cP(j,\ell)$ holds.
Since $\theta\le 1$, we have $\theta^{j-1}+\theta^j
\ge \theta^{k-1}+\theta^k\ge 1$.    So, if $j\ge 1$, Corollary \ref{alt:cor:G}
gives
\begin{equation}
  \label{main:prop:eq2}
  Y_m(i)^\theta\ll Y_{m-1}(i)
  \quad\text{for each}\quad m=0,1,\dots,j\et i\ge 0.
\end{equation}

Suppose that $j<k$.  Then $\cP(j+1,\ell)$ does not hold.  However,
by Proposition \ref{minimal:prop:Pjl}, $\cP(j+1,\ell-1)$ holds
because $\cP(j,\ell)$ does.  Thus, by Proposition \ref{P0:prop:I},
there are arbitrarily large $i\ge 1$ for which
\[
  \cU^\ell(\ux_{i-1})\not\subseteq \UU^\ell(A_{j+1}(i))
  \et
  1 \ll H\big(\UU^{\ell-1}(A_{j+1}(i))\big)L_{i-1}^{n-j-2\ell+1}.
\]
Using Proposition \ref{first:prop}  to estimate from above the height of
$\UU^{\ell-1}(A_{j+1}(i))$ and recalling that $n=k+2\ell$ and
$L_{i-1}\ll Y_{-1}(i)^{-\lambda}$, this gives
\[
  1 \ll Y_{j}(i)^{1-(\ell-1)\lambda}
       \Big( \prod_{m=0}^{j} Y_{j-m}(i)^{-\lambda} \Big)
        Y_{-1}(i)^{-(k-j+1)\lambda}.
\]
For those $i$, we have $\cU^\ell(\ux_{i-1})\not\subseteq \cU^\ell(\ux_i)$.
Thus, using \eqref{main:prop:eq1} if $j=0$ and \eqref{main:prop:eq2} else, we find
\begin{align*}
  0 &\le 1 - \lambda(\ell-1) - \lambda\Big(\sum_{m=0}^{j} \theta^{m}\Big)
             - \lambda(k-j+1)\theta^{j+1} \\
     &\le 1 - \lambda\Big(\ell-1+\sum_{m=0}^{k+1} \theta^{m}\Big)
        = 1- \lambda\eta,
\end{align*}
against the hypothesis that $\lambda>\eta^{-1}$.

The above contradiction shows that $j=k$.  Thus $\cP(k,\ell)$ holds and so
$\cP(k+1,\ell-1)$ holds as well (by Proposition \ref{minimal:prop:Pjl}).
As $(k+1)+2(\ell-1)=n-1<n$, Corollary \ref{first:cor:T} provides arbitrarily large
values of $i$ for which
\[
  1  \ll Y_{k}(i)^{1-(\ell-1)\lambda}
        \Big(\prod_{m=0}^{k} Y_{k-m}(i)^{-\lambda}\Big) Y_{-1}(i)^{-\lambda}.
\]
Since $j=k\ge 1$, \eqref{main:prop:eq2} applies.  So, we conclude that
\[
  0 \le  1 - \lambda\Big( \ell-1+\sum_{m=0}^{k+1} \theta^{m} \Big)
      = 1 - \lambda\eta,
\]
which again contradicts the hypothesis that $\lambda>\eta^{-1}$.
\end{proof}

\begin{proof}[Proof of Theorem \ref{intro:thm:main}]
For $2\le n\le 3$, the upper bound for $\hlambda_n(\xi)$ provided by the
theorem is weaker than the prior ones mentioned in the introduction.  For
$4\le n\le 11$, they are also weaker than those coming from Theorems
\ref{intro:thm:pair} and \ref{intro:thm:impair}, and listed on Table
\ref{intro:table1}. So, we may assume that $n\ge 12$.

Suppose, by contradiction, that the upper bound for $\hlambda_n(\xi)$ given
by Theorem \ref{intro:thm:main} does not hold for such $n$.  Then, we may
assume that
\[
 \lambda=\Big(\frac{n}{2}+a\sqrt{n}+\frac{1}{3}\Big)^{-1}
\]
where $a=(1-\log(2))/2$.  Define
\begin{align*}
  &\ell =\left\lfloor \frac{n}{2}-\frac{\log(2)}{2}\sqrt{n} + 1\right\rfloor,
  \quad
  k= n -2\ell,
  \quad
  \theta= \frac{\ell\lambda}{1-\lambda}
  \et
  \eta= \ell - 1 + \sum_{m=0}^{k+1}\theta^m.
\label{main:proof:eq:eta}
\end{align*}
Since $n\ge 9$, we find that $1\le \ell <n/2$ and so $k\ge 1$.
We will show further that
\begin{equation}
 \label{main:proof:eq:conditions}
 \frac{1}{2}\le \theta^k <1
  \et
 \eta>\lambda^{-1}.
\end{equation}
So this choice of parameters fulfills all the hypotheses of
Proposition \ref{main:prop} but not its conclusion.  This will
conclude the proof, by contradiction.

A quick computer computation shows that
\eqref{main:proof:eq:conditions} holds for $12\le n< 900$.  So, we
may assume that $\sqrt{n}\ge 30$.  This assumption will simplify the
estimates below.

By choice of $\ell$, there is some $t\in(0,2]$ such that
\[
  \ell=(n-\log(2)\sqrt{n} + t)/2\,,
  \quad\hbox{and then}\quad
   k=\log(2) \sqrt{n} - t.
\]
Using the actual value of $\lambda$, this gives
\[
  \theta=\frac{n-k}{2\big(\lambda^{-1}-1\big)}
     =\frac{n-\log(2)\sqrt{n}+t}{n+2a\sqrt{n}-4/3}.
\]
Define
\begin{equation}
 \label{main:proof:eq:epsilon}
 \epsilon=1-\theta=\frac{\sqrt{n}-(4/3+t)}{n+2a\sqrt{n}-4/3}.
\end{equation}
As $n\ge 12$, we have $0<\epsilon<1$, thus $0<\theta<1$, and so $\theta^k<1$.
We set
\[
  r=-\frac{\log(2)}{\log(1-\epsilon)}
\]
so that $\theta^r=(1-\epsilon)^r=1/2$.  Since $0<\epsilon<1$, we
find
\[
  \frac{\epsilon}{1-\epsilon/2}
   =\sum_{i=1}^\infty\frac{\epsilon^i}{2^{i-1}}
   \le \sum_{i=1}^\infty\frac{\epsilon^i}{i}
   =-\log(1-\epsilon)
   \le \sum_{i=1}^\infty \epsilon^i
   =\frac{\epsilon}{1-\epsilon},
 \]
thus, by definition of $r$,
\begin{equation}
 \label{main:proof:estimates:r}
  \log(2)\left(\frac{1}{\epsilon}-1\right)
  \le r
  \le \log(2)\left(\frac{1}{\epsilon}-\frac{1}{2}\right).
\end{equation}
Moreover, \eqref{main:proof:eq:epsilon} yields
\[
  \frac{1}{\epsilon}
    = \sqrt{n} + b + \frac{c}{\sqrt{n}-(4/3+t)}
\]
with $b=2a+4/3+t$ and $c=b(4/3+t)-4/3$.  As $c>0$, this implies that
\begin{equation}
 \label{main:proof:estimates:1/epsilon}
 \frac{1}{\epsilon}
    \ge \sqrt{n} + b + \frac{c}{\sqrt{n}} \ge \sqrt{n}+1.64,
\end{equation}
so $r>\log(2)\sqrt{n}\ge k$, and thus $\theta^k\ge \theta^r=1/2$.
This proves the first condition in \eqref{main:proof:eq:conditions}.

To verify the second condition $\eta>\lambda^{-1}$, we first note that
\begin{equation}
 \label{main:proof:majoration:1/epsilon}
 \frac{1}{\epsilon}
    \le \sqrt{n} + b + \frac{c}{30-(4/3+2)} \le \sqrt{n}+4.05,
\end{equation}
using the hypothesis $\sqrt{n}\ge 30$ and $t\le 2$.   We set further
\[
  \delta=k+2-r \et E=\frac{1-\theta^\delta}{1-\theta}-\delta.
\]
As $\theta^r=1/2$, we find
\begin{align*}
  \eta
   =\ell-1+\frac{1-\theta^{k+2}}{1-\theta}
   &=\ell-1+\frac{1-\theta^{r}}{1-\theta}+\theta^r\frac{1-\theta^{\delta}}{1-\theta}\\
   &=\frac{n-k}{2}-1+\frac{1}{2\epsilon}+\frac{1}{2}(E+\delta)
   =\frac{n}{2}-\frac{r}{2}+\frac{1}{2\epsilon}+\frac{E}{2}.
\end{align*}
Using the upper bound for $r$ given by \eqref{main:proof:estimates:r}
and then the lower bound for $1/\epsilon$ given by
\eqref{main:proof:estimates:1/epsilon}, we deduce that
\begin{align*}
  \eta-\lambda^{-1}
   &\ge \frac{n}{2}+\frac{a}{\epsilon}+\frac{\log(2)}{4}+\frac{E}{2}
        -\left(\frac{n}{2}+a\sqrt{n}+\frac{1}{3}\right)\\
   &\ge 1.64\,a+\frac{\log(2)}{4}-\frac{1}{3}+\frac{E}{2}\\
   &\ge 0.09+\frac{E}{2}.
\end{align*}
So it remains to show that $E>-0.18$.

Lagrange remainder theorem gives
\[
  \theta^\delta
   = (1-\epsilon)^\delta
   = 1 - \delta\epsilon
      + \frac{\delta(\delta-1)}{2}(1-\epsilon')^{\delta-2}\epsilon^2
\]
for some real number $\epsilon'$ with $0<\epsilon'<\epsilon$, and thus
\[
  E = \frac{1-\theta^\delta}{\epsilon} - \delta
    = -\frac{\delta(\delta-1)}{2}(1-\epsilon')^{\delta-2}\epsilon.
\]
As $k<r$, we have $\delta<2$.  Moreover, the upper bound for $r$ given
by \eqref{main:proof:estimates:r} together with that of $1/\epsilon$
given by \eqref{main:proof:majoration:1/epsilon} yields
\[
  \delta \ge k+2-\log(2)\left(\frac{1}{\epsilon}-\frac{1}{2}\right)
         \ge -2.47.
\]
Since by \eqref{main:proof:estimates:1/epsilon} we have
$\epsilon <1/\sqrt{n}\le 1/30$, we conclude that
\[
  |E| \le \frac{2.47\times3.47}{2}\left(1-\frac{1}{30}\right)^{\disp -4.47}\frac{1}{30}
      \le 0.17.
\qedhere
\]
\end{proof}

%
%

\section{A new construction}
\label{sec:new}

In this section, we introduce a new construction which in some cases
yields $\lambda_k(\xi)>1$ for an integer $k\ge 1$.  We will use it
in the proof of Theorems \ref{intro:thm:impair} and \ref{intro:thm:pair}
in combination with the following results of Schleischitz
\cite[Theorems 1.6 and 1.12]{Schl2016}.

\begin{theorem}[Schleischitz, 2016]
\label{new:thm:Schleischitz}
Let $\xi\in\bR\setminus\bQ$.  For each integer $n\ge 1$, we have
\[
 \hlambda_n(\xi) \le \max\{1/n,1/\lambda_1(\xi)\}.
\]
Moreover, if $\lambda_k(\xi)>1$ for some integer $k\ge 1$, then
$\lambda_1(\xi)=k-1+k\lambda_k(\xi)$.
\end{theorem}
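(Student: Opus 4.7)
The theorem has two independent parts. For the inequality $\hlambda_n(\xi) \le \max\{1/n, 1/\lambda_1(\xi)\}$: since $\hlambda_n(\xi) \le 1$ for irrational $\xi$, the bound is automatic when $\lambda_1(\xi) \le 1$. Assuming $\lambda_1(\xi) > 1$, I argue by contradiction from $\hlambda_n(\xi) > \max\{1/n, 1/\lambda_1(\xi)\}$. Pick $\lambda, \lambda'$ with $1/n < \lambda < \hlambda_n(\xi)$, $\lambda' < \lambda_1(\xi)$ and $\lambda\lambda' > 1$, and a convergent $p/q$ of $\xi$ with $|q\xi - p| \le q^{-\lambda'}$. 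Setting $X = q^{1/\lambda + \epsilon}$ (so $X > q$) and invoking the uniform hypothesis yields $\ux = (x_0, \ldots, x_n) \in \bZ^{n+1}$ with $|x_0| \le X$ and $|x_0\xi^j - x_j| \le X^{-\lambda}$. Writing $\delta_j = x_j - x_0\xi^j$, one computes for each $i = 0, \ldots, n-1$
\[
 x_i p - x_{i+1} q = x_0\xi^i(p - q\xi) + \delta_i p - \delta_{i+1} q,
\]
whose absolute value is $\ll q^{1/\lambda + \epsilon - \lambda'} + q^{-\lambda\epsilon} = o(1)$. These integers therefore vanish for $q$ large, giving $x_i = x_0(p/q)^i$. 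Since the $x_i$ are integers and $\gcd(p, q) = 1$, we deduce $q^n \mid x_0$, so $|x_0| \ge q^n$, contradicting $|x_0| \le q^{1/\lambda + \epsilon}$ for $\epsilon$ small since $\lambda > 1/n$.

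For the equality $\lambda_1(\xi) = k - 1 + k\lambda_k(\xi)$ assuming $\lambda_k(\xi) > 1$, I prove both inequalities. The direction $\lambda_1(\xi) \le k - 1 + k\lambda_k(\xi)$ is a direct construction: from coprime integers $(q, p)$ with $|q\xi - p| \le q^{-\lambda}$, form $\ux = (q^k, pq^{k-1}, \ldots, p^k) \in \bZ^{k+1}$ and use the factorization $q^j\xi^j - p^j = (q\xi - p)\sum_{r=0}^{j-1} q^{j-1-r}p^r\xi^{j-1-r}$ together with $|p| \asymp q$ to bound $|x_0\xi^j - x_j| = q^{k-j}|q^j\xi^j - p^j| \ll q^{k-1-\lambda}$. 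With $X = |x_0| \asymp q^k$ this yields $L_\xi(\ux) \ll X^{-(\lambda - k + 1)/k}$, hence $\lambda_k(\xi) \ge (\lambda_1(\xi) - k + 1)/k$.

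The reverse inequality $\lambda_1(\xi) \ge k - 1 + k\lambda_k(\xi)$ rests on a rigidity principle. For $\lambda$ slightly below $\lambda_k(\xi)$ (necessarily $\lambda > 1$) and an integer point $\ux = (x_0, \ldots, x_k)$ with $|x_0| \le X$, $|x_0\xi^j - x_j| \le X^{-\lambda}$, set $\delta_j = x_j - x_0\xi^j$ and expand each $2 \times 2$ minor of the matrix with rows $(x_0, x_1, \ldots, x_{k-1})$ and $(x_1, x_2, \ldots, x_k)$:
\[
 x_i x_{j+1} - x_{i+1} x_j = x_0\bigl(\xi^i\delta_{j+1} + \xi^{j+1}\delta_i - \xi^j\delta_{i+1} - \xi^{i+1}\delta_j\bigr) + \delta_i\delta_{j+1} - \delta_{i+1}\delta_j = O(X^{1-\lambda}).
\]
Since $\lambda > 1$, these integer minors vanish for $X$ large, forcing the matrix to have rank one and hence $\ux = m(q^k, pq^{k-1}, \ldots, p^k)$ for coprime integers $p, q$ and $m \ge 1$. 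Substituting into $|x_0\xi - x_1| = mq^{k-1}|q\xi - p| \le X^{-\lambda}$ with $X = mq^k$ yields $|q\xi - p| \le q^{-(k-1+k\lambda)}$, so $\lambda_1(\xi) \ge k - 1 + k\lambda_k(\xi)$. The main obstacle in both parts is recognizing the correct integer identity --- determinant with an external convergent in Part 1, internal $2\times 2$ minor in Part 2 --- and ensuring it is $o(1)$ so that integrality forces vanishing; the hypothesis $\lambda_k(\xi) > 1$ in Part 2 is precisely what makes the internal minors drop below unity.
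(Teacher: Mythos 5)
The paper cites this result from Schleischitz [Schl2016, Theorems 1.6 and 1.12] without reproducing a proof, so there is no internal argument of the paper to compare your proposal against. Your self-contained proof is correct, and the mechanism you use---an integer determinant forced to vanish by being $o(1)$---is the standard one for this result; the rank-one rigidity on the Veronese curve in Part 2 is in particular essentially Schleischitz's own route.

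Two small points to tighten. First, in the reverse direction of Part 2 you write ``with $X = mq^k$'', but the data only guarantees $|x_0| = mq^k \le X$; this is fine because $X^{-\lambda} \le (mq^k)^{-\lambda}$, so $mq^{k-1}|q\xi - p| \le (mq^k)^{-\lambda}$ and hence $|q\xi - p| \le m^{-1-\lambda}q^{-(k-1+k\lambda)} \le q^{-(k-1+k\lambda)}$ still follows. Second, to conclude $\lambda_1(\xi) \ge k-1+k\lambda$ you must confirm that the denominators $q$ arising from arbitrarily large admissible $X$ are themselves unbounded. This holds because $\xi$ is irrational: one gets infinitely many distinct points $\ux$ (a fixed $\ux$ recurring for $X\to\infty$ would force $L_\xi(\ux)=0$, i.e.\ $\xi\in\bQ$), so $|x_0| = mq^k \to\infty$; and if $q$ stayed bounded along a subsequence, then eventually $q$ and $p$ would be constant while $|q\xi - p| \le m^{-1-\lambda}q^{-(k-1+k\lambda)} \to 0$, again forcing $\xi\in\bQ$. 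With these two remarks your argument is complete.
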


In fact, we will simply need the following weaker consequence.

\begin{cor}
\label{new:cor}
Let $\xi\in\bR\setminus\bQ$ and let $\lambda\in(1/n,1)$.  Suppose that, for some
integer $k\ge 1$, there exist non-zero points $C\in\bZ^{k+1}$ for which
$L_\xi(C)$ is arbitrarily small while the products $\norm{C}L_\xi(C)^\lambda$
remain bounded from above.  Then,  we have $\hlambda_n(\xi)\le \lambda$.
\end{cor}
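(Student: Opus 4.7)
The plan is to convert the hypothesis into a lower bound on $\lambda_k(\xi)$, then invoke both parts of Theorem \ref{new:thm:Schleischitz} to conclude. First I would observe that if $C\in\bZ^{k+1}\setminus\{0\}$ has $L_\xi(C)<1$, then necessarily $C_0\neq 0$ (otherwise $L_\xi(C)\ge 1$), and from $|C_0\xi^j-C_j|<1$ for $1\le j\le k$ we get $|C_j|\ll|C_0|$, so $\|C\|\asymp|C_0|$ with implied constants depending only on $\xi$ and $k$. Thus the hypothesis $\|C\|L_\xi(C)^\lambda\ll 1$ rewrites as $|C_0|L_\xi(C)^\lambda\ll 1$, i.e.\ $L_\xi(C)\le c\,|C_0|^{-1/\lambda}$ for some constant $c$.

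Next, since $L_\xi(C)\to 0$ along the given family and $|C_0|\asymp\|C\|$, the first coordinates $|C_0|$ tend to infinity. So for any $\lambda'<1/\lambda$ and any sufficiently large $|C_0|$, we have $c\le|C_0|^{1/\lambda-\lambda'}$, which gives $L_\xi(C)\le|C_0|^{-\lambda'}$ with $|C_0|\le X:=|C_0|$. This shows $\lambda_k(\xi)\ge 1/\lambda$. Because $\lambda<1$, we conclude $\lambda_k(\xi)>1$, so the second part of Theorem \ref{new:thm:Schleischitz} applies and yields
\[
 \lambda_1(\xi)=k-1+k\,\lambda_k(\xi)\ge k-1+\frac{k}{\lambda}=\frac{(k-1)\lambda+k}{\lambda}.
\]
Hence
\[
 \frac{1}{\lambda_1(\xi)}\le\frac{\lambda}{(k-1)\lambda+k}\le\lambda,
\]
the last inequality being $(k-1)\lambda+k\ge 1$, which is immediate since $k\ge 1$ and $\lambda>0$.

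Finally, using the assumption $\lambda>1/n$ together with the first part of Theorem \ref{new:thm:Schleischitz}, we obtain
\[
 \hlambda_n(\xi)\le\max\bigl\{1/n,\,1/\lambda_1(\xi)\bigr\}\le\lambda,
\]
as required. The argument is essentially bookkeeping; the only mildly delicate point is the reduction from the Euclidean norm $\|C\|$ appearing in the hypothesis to the first coordinate $|C_0|$ used in the definition of $\lambda_k(\xi)$, which is why noting $\|C\|\asymp|C_0|$ under the smallness of $L_\xi(C)$ is important. No other obstacle is expected.
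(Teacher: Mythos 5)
Your proof is correct and follows exactly the same route as the paper's (very terse) proof: establish $\lambda_k(\xi)\ge 1/\lambda>1$, then apply the second part of Theorem \ref{new:thm:Schleischitz} to get $\lambda_1(\xi)\ge 1/\lambda$, and finally use the first part together with $\lambda>1/n$ to conclude. Your extra care about the passage from $\|C\|$ to $|C_0|$ (via $\|C\|\asymp|C_0|$ once $L_\xi(C)<1$) is a useful explicit step that the paper leaves implicit, as is the observation that $L_\xi(C)\to 0$ forces $\|C\|\to\infty$ (since $\xi\notin\bQ$ guarantees $L_\xi(C)>0$ for each of the finitely many nonzero lattice points of bounded norm).
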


\begin{proof}
The hypothesis implies that $\lambda_k(\xi)\ge 1/\lambda>1$.  By the above
theorem, we conclude that  $\lambda_1(\xi)\ge 1/\lambda$ and so
$\hlambda_n(\xi)\le\max\{1/n,\lambda\}=\lambda$.
\end{proof}

For each positive integer $k$ and each non-zero subspace $V$ of $\bR^{k+1}$
defined over $\bQ$, it is natural to define
\[
 L_\xi(V)=\norm{\uz_1\wedge\cdots\wedge\uz_s\wedge\Xi_k}
\]
where $\{\uz_1,\dots,\uz_s\}$ is a basis of $V\cap\bZ^{k+1}$ over
$\bZ$, and where $\Xi_k=(1,\xi,\dots,\xi^k)$.  This is independent of the choice
of the basis, like for the height $H(V)=\norm{\uz_1\wedge\cdots\wedge\uz_s}$
of $V$.  In particular, if $\uz$ is a primitive point of $\bZ^{k+1}$, we find
\[
 L_\xi(\Span{\uz})=\norm{\uz\wedge\Xi_k}\asymp L_\xi(\uz)
\]
with an implied constant that depends only on $k$ and $\xi$ (see \S\ref{sec:heights}).
In general, if $\{\uy_1,\dots,\uy_s\}$ is a maximal linearly independent
subset of $V\cap\bZ^{k+1}$, then arguing as in the proof
of Lemma \ref{heights:lemma}, we find
\[
 L_\xi(V)\le\norm{\uy_1\wedge\cdots\wedge\uy_s\wedge\Xi_k}
   \ll L_\xi(\uy_1)\cdots L_\xi(\uy_s),
\]
with an implied constant of the same nature.  We can now present our construction.

\begin{prop}
\label{new:prop}
Let $\xi\in\bR\setminus\bQ$, let $k,\ell\ge 1$ be integers, and let $n=k+\ell$.
Suppose that $V$ is a subspace of $\bR^{k+1}$ of dimension $k$, and that
$\ux\in\bZ^{n+1}$ satisfies $\UU^\ell(\ux)\not\subseteq V$.  Finally, let
$\{\uz_1,\dots,\uz_k\}$ be a basis of $V\cap \bZ^{k+1}$.  Then the point
\[
 C=\left(\det(\uz_1,\dots,\uz_k,\ux^{(0,\ell)}),
             \dots,
             \det(\uz_1,\dots,\uz_k,\ux^{(\ell,\ell)})\right)\in\bZ^{\ell+1}
\]
is non-zero.  It satisfies
\[
 \norm{C} \ll \norm{\ux}L_\xi(V)+H(V)L_\xi(\ux)
 \et
  L_\xi(C) \ll H(V)L_\xi(\ux)
\]
with implied constants that depend only on $k$ and $\xi$.
\end{prop}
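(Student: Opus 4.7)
My plan is to interpret each coordinate $C_m$ as the scalar in $\bigwedge^{k+1}\bR^{k+1}\cong\bR$ attached to $\uz_1\wedge\cdots\wedge\uz_k\wedge\ux^{(m,\ell)}$, and then to exploit multilinearity of the determinant throughout. Integrality of $C$ is immediate since each coordinate is a determinant of integer vectors. Moreover $C_m=0$ is equivalent to $\ux^{(m,\ell)}\in V$, so $C=0$ would force $\UU^\ell(\ux)\subseteq V$, contrary to hypothesis.

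For the bound on $\norm{C}$, I will decompose $\ux^{(m,\ell)}=x_m\Xi_k+\Delta_m$, where $x_m$ is the $m$-th coordinate of $\ux$ and where the first coordinate of $\Delta_m$ vanishes, so that $\norm{\Delta_m}\asymp L_\xi(\ux^{(m,\ell)})\ll L_\xi(\ux)$ by \eqref{three:eq:xkl}. Multilinearity of the determinant then gives
\[
 C_m=x_m\det(\uz_1,\dots,\uz_k,\Xi_k)+\det(\uz_1,\dots,\uz_k,\Delta_m).
\]
Via the identification $\bigwedge^{k+1}\bR^{k+1}\cong\bR$, the first determinant has absolute value $\norm{\uz_1\wedge\cdots\wedge\uz_k\wedge\Xi_k}=L_\xi(V)$, while the basic inequality $|\det(\uz_1,\dots,\uz_k,\uv)|\le H(V)\norm{\uv}$ bounds the second term by $\ll H(V)L_\xi(\ux)$. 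Taking the max over $m$ yields $\norm{C}\ll\norm{\ux}L_\xi(V)+H(V)L_\xi(\ux)$.

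For $L_\xi(C)=\max_{1\le j\le\ell}|\xi^jC_0-C_j|$, the crucial move is to combine $C_0$ and $C_j$ into a single determinant before estimating. By multilinearity,
\[
 \xi^jC_0-C_j=\det\bigl(\uz_1,\dots,\uz_k,\,\xi^j\ux^{(0,\ell)}-\ux^{(j,\ell)}\bigr).
\]
The $p$-th coordinate of $\xi^j\ux^{(0,\ell)}-\ux^{(j,\ell)}$ equals $\xi^jx_p-x_{j+p}=\xi^j(x_p-x_0\xi^p)+(x_0\xi^{j+p}-x_{j+p})$, which is $\ll L_\xi(\ux)$ uniformly in $0\le p\le k$ and $1\le j\le\ell$ (using $j+p\le n$). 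Applying once more $|\det(\uz_1,\dots,\uz_k,\uv)|\le H(V)\norm{\uv}$ yields $|\xi^jC_0-C_j|\ll H(V)L_\xi(\ux)$, as desired.

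The main obstacle is precisely this last step: bounding $C_0$ and $C_j$ separately would contribute unwanted $\norm{\ux}L_\xi(V)$ terms from the $x_m\det(\uz_1,\dots,\uz_k,\Xi_k)$ pieces. Keeping $\xi^jC_0-C_j$ together as a single determinant lets the $\Xi_k$-components of $\xi^j\ux^{(0,\ell)}$ and $\ux^{(j,\ell)}$ cancel up to an $L_\xi(\ux)$-size error, so that no $L_\xi(V)$ factor survives in the final bound.
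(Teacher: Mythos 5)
Your proof is correct and follows essentially the same route as the paper: both decompose the truncated points along $\Xi_k$, use multilinearity of the determinant, identify $|\det(\uz_1,\dots,\uz_k,\Xi_k)|=L_\xi(V)$ and $|\det(\uz_1,\dots,\uz_k,\uv)|\le H(V)\norm{\uv}$, and exploit the same cancellation of the $L_\xi(V)$-terms in $\xi^jC_0-C_j$. The only cosmetic difference is that the paper writes $\ux=x_0\Xi_n+\Delta$ once, so that $C_j=x_0\xi^j\det(\uz_1,\dots,\uz_k,\Xi_k)+\det(\uz_1,\dots,\uz_k,\Delta^{(j,\ell)})$ makes the cancellation immediate, whereas you decompose each $\ux^{(m,\ell)}$ with its own leading coefficient $x_m$ and then recover the same cancellation by merging $\xi^jC_0-C_j$ into the single determinant $\det\bigl(\uz_1,\dots,\uz_k,\xi^j\ux^{(0,\ell)}-\ux^{(j,\ell)}\bigr)$.
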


We will write $C(V,\ux)$ to denote this point $C$, although it is determined
by $V$ and $\ux$ only up to multiplication by $\pm 1$.  In practice, this is
no problem since this ambiguity does not affect the quantities $\norm{C}$
and $L_\xi(C)$.

\begin{proof}  Write $\ux=(x_0,\dots,x_n)$ and $C=(C_0,\dots,C_\ell)$.
Then we have $\ux=x_0\Xi_n+\Delta$ with $\norm{\Delta}\asymp L_\xi(\ux)$
and for each $j=0,\dots,\ell$ we find
\[
 C_j = x_0\xi^j\det(\uz_1,\dots,\uz_k,\Xi_k) + \det(\uz_1,\dots,\uz_k,\Delta^{(j,\ell)}).
\]
The estimates for $\norm{C}$ and $L_\xi(C)=\max_{1\le j\le k}|C_j-C_0\xi^j|$ follow.
\end{proof}

%
%

\section{Small odd degree}
\label{sec:odd}

This section is devoted to the proof of Theorem~\ref{intro:thm:impair}.
So, we suppose that
\[
 n=2m+1 \geq 5
\]
is an odd integer.  We argue by contradiction, assuming that $\hlambda_n(\xi)
> \alpha$ where $\alpha=\alpha_m$ is the unique positive root of
\[
  P_m(x)=1-(m+1)x-mx^2.
\]
Then $\cP(1,m)$ holds by Corollary \ref{P1:cor}, and so $\cP(2,m-1)$
holds as well by Proposition \ref{minimal:prop:Pjl}.  Thus, for each large
enough $i\in I$, the vector space
\[
 V_i=\UU^{m-1}(\ux_{i-1},\ux_i,\ux_{i+1}) \subseteq \bR^{m+3}
\]
has dimension at least $m+2$ (see the definitions and remarks in Section \ref{sec:minimal}).

\begin{lemma}
\label{odd:lemma1}
With the above hypotheses, we have
\begin{equation}
\label{odd:lemma1:eq1}
 X_{j+1}^\theta\ll X_{i+1}
 \et
 X_{i+1}^\theta\ll X_i
 \quad
 \text{where}
 \quad
 \theta=\frac{m\alpha}{1-\alpha}=\frac{1}{1+\alpha},
\end{equation}
for each pair of consecutive elements $i<j$ of $I$.  If $i$ is large enough,
then $\dim(V_i)=m+2$,
\begin{equation}
\label{odd:lemma1:eq2}
 H(V_i)\ll X_{j+1}^{-(m-1)\alpha}X_{i+1}^{1-\alpha}X_i^{-\alpha}
 \et
 L_\xi(V_i)\ll X_{j+1}^{-m\alpha}X_{i+1}^{-\alpha}X_i^{-\alpha}.
\end{equation}
\end{lemma}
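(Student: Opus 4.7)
The plan is to fix $\lambda$ with $\alpha<\lambda<\hlambda_n(\xi)$ and work throughout with the larger ratio $\theta_\lambda=m\lambda/(1-\lambda)\ge\theta$, deducing the stated estimates with $\alpha$ at the end by monotonicity (each $X_h\ge1$, and the exponents on the right-hand sides have the appropriate signs). First I note that the defining equation $P_m(\alpha)=0$ rearranges to $m\alpha(1+\alpha)=1-\alpha$, which gives the identity $\theta=m\alpha/(1-\alpha)=1/(1+\alpha)$.

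Since $\cP(1,m)$ holds, applying Corollary~\ref{alt:cor:F} with $\ell=m$ gives $Y_1(i)^{\theta_\lambda}\ll Y_0(i)$ and $Y_0(i)^{\theta_\lambda}\ll Y_{-1}(i)$ for all $i\ge0$. For a pair of consecutive elements $i<j$ of $I$ one has $\sigma_1(i)=j$ (by the observation following Lemma~\ref{minimal:XLXL}), hence $Y_{-1}(i)=X_i$, $Y_0(i)=X_{i+1}$ and $Y_1(i)=X_{j+1}$, which translates the corollary into~\eqref{odd:lemma1:eq1}.

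Next I will show that $\dim V_i=m+2$ for large $i\in I$. The lower bound $\dim V_i\ge m+2$ follows from $\cP(2,m-1)$, which Proposition~\ref{minimal:prop:Pjl} derives from $\cP(1,m)$, together with the identity $A_2(i-1)=\langle\ux_{i-1},\ux_i,\ux_{i+1}\rangle$ valid whenever $i\in I$. For the upper bound, Corollary~\ref{three:cor}(i) with $\ell=m$ applied to $A=A_0(i)$ and $A=A_1(i)$ also yields $\cP(1,m-1)$. Assuming toward a contradiction that $\dim V_i=m+3$ for infinitely many $i\in I$, one has $V_i=\bR^{m+3}$ and Lemma~\ref{P1:lemma2} (taken with $\ell=m-1$ and $e=2$) gives
\[
 1\ll H(V_i)\ll X_{j+1}^{-(m-1)\lambda}X_{i+1}^{1-\lambda}X_i^{-2\lambda}.
\]
Majorizing $X_{j+1}\ge X_{i+1}$ in the first factor and $X_i\gg X_{i+1}^{\theta_\lambda}$ (by the growth estimate just proved) in the last reduces the right-hand side to $X_{i+1}^{P_m(\lambda)/(1-\lambda)}$. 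Since $\lambda>\alpha$ forces $P_m(\lambda)<0$ while $1-\lambda>0$, this exponent is strictly negative, contradicting $X_{i+1}\to\infty$. This is the step I expect to be hardest, because it is precisely the defining relation of $\alpha$ that enters as the obstruction to $V_i$ filling $\bR^{m+3}$.

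The height estimate in~\eqref{odd:lemma1:eq2} then follows at once from Lemma~\ref{P1:lemma2} applied with $\ell=m-1$ and $e=1$. For the $L_\xi$ bound, the consequence $\cP(0,m-1)$ forces $\dim\UU^{m-1}(\ux_j)=m$, so the truncations $\ux_j^{(k,m-1)}$ for $k=0,\dots,m-1$ form a basis of $\UU^{m-1}(\ux_j)\cap\bZ^{m+3}$, each satisfying $L_\xi\ll L_j\ll X_{j+1}^{-\lambda}$. I will complete these into a maximal linearly independent family in $V_i\cap\bZ^{m+3}$ by adjoining two further integer truncations, drawn from those of $\ux_{j-1}$ (with $L_\xi\ll L_{j-1}\ll X_j^{-\lambda}$) and/or $\ux_{i-1}$ (with $L_\xi\ll L_{i-1}\ll X_i^{-\lambda}$). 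The product bound $L_\xi(V_i)\ll\prod_k L_\xi(\uy_k)$ recalled in Section~\ref{sec:new}, combined with $X_i\le X_{i+1}\le X_j$---and in particular $X_j^{-2\lambda}\le X_{i+1}^{-\lambda}X_i^{-\lambda}$, which covers the degenerate case where both extra basis vectors come from $\ux_{j-1}$---then yields $L_\xi(V_i)\ll X_{j+1}^{-m\lambda}X_{i+1}^{-\lambda}X_i^{-\lambda}$ in every case, completing the proof.
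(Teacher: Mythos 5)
Your proof is correct and follows essentially the same route as the paper's: fix $\lambda\in(\alpha,\hlambda_n(\xi))$, derive the growth estimates from property $\cP(1,m)$, rule out $\dim V_i=m+3$ by bounding $H(V_i)$ via Lemma~\ref{P1:lemma2} and showing the exponent equals $P_m(\lambda)/(1-\lambda)<0$, then read off the height and $L_\xi$ bounds from the explicit basis of $V_i$. The only cosmetic departure is that you invoke Corollary~\ref{alt:cor:F} for $X_{j+1}^\theta\ll X_{i+1}$ and $X_{i+1}^\theta\ll X_i$ whereas the paper extracts the same from the first estimate of Lemma~\ref{P1:lemma2}; both are equivalent shortcuts to the same inequality $X_{j+1}^{m\lambda}\ll X_{i+1}^{1-\lambda}$.
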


\begin{proof}
Since $\hlambda_n(\xi) > \alpha$, we can choose $\epsilon>0$ such that
$\alpha+\epsilon<\hlambda_n(\xi)$.  Then the results of the preceding sections
apply with $\lambda=\alpha+\epsilon$.   In particular, for each
pair of consecutive elements $i<j$ of $I$, Lemma \ref{P1:lemma2} gives
\[
 H(\UU^m(\ux_i,\ux_{i+1}))
   \ll X_{j+1}^{-m\alpha}X_{i+1}^{1-\alpha}
 \et
 H(V_i) \ll X_{j+1}^{-(m-1)\alpha}X_{i+1}^{1-\alpha-\epsilon}X_i^{-e(i)\alpha}
\]
where $e(i)=\dim(V_i)-(m+1)$, because both $\cP(1,m)$ and $\cP(1,m-1)$ hold.
Since we have $H(\UU^m(\ux_i,\ux_{i+1}))\ge 1$, the first estimate yields
$X_{j+1}^\theta\ll X_{i+1}$ with $\theta$ as in \eqref{odd:lemma1:eq1}
(this also follows from Corollary \ref{alt:cor:F}).  So, if $i$ is large enough to
admit a predecessor $h<i$ in $I$, we also have $X_{i+1}^\theta\ll X_{h+1}\le X_i$,
thus $X_{i+1}\ll X_i^{1+\alpha}$.  This proves \eqref{odd:lemma1:eq1}.
If furthermore $V_i=\bR^{m+3}$, then $e(i)=2$ and using $X_{j+1}\ge X_{i+1}$,
we obtain
\[
 1=H(V_i) \ll X_{i+1}^{1-m\alpha-\epsilon}X_i^{-2\alpha}
   \ll X_i^{(1+\alpha)(1-m\alpha-\epsilon)-2\alpha}
    =X_i^{-(1+\alpha)\epsilon},
\]
which forces $i$ to be bounded.  So, if $i$ is large enough, we have $\dim(V_i)=m+2$,
thus $e(i)=1$ and the estimate for $H(V_i)$ in \eqref{odd:lemma1:eq2} follows.
Finally, the proof of Lemma \ref{P1:lemma2} shows that $V_i$ admits a basis
of the form
\[
 \big\{\ux_j^{(0,m-1)},\dots,\ux_j^{(m-1,m-1)},\ux_{j-1}^{(p,m-1)},
      \ux_h^{(q,m-1)}\big\}
\]
for some $p,q\in\{0,1,\dots,m-1\}$ and some $h\in\{i-1,j-1\}$. So
the considerations of the preceding section provide
$L_\xi(V_i)   \ll L_j^mL_{j-1}L_{i-1}
\le X_{j+1}^{-m\alpha}X_{i+1}^{-\alpha}X_i^{-\alpha}$.
\end{proof}

It is now an easy matter to complete the proof of Theorem \ref{intro:thm:impair}.
By the preceding lemma, there are arbitrarily large pairs of successive elements
$i<j$ of $I$ for which $\dim(V_i)=m+2$ and $V_j\not\subseteq V_i$.  The latter
condition means that $\UU^{m-1}(\ux_{j+1})\not\subseteq V_i$.  So, for these
pairs, Proposition \ref{new:prop} shows that the point
\[
 C_i=C(V_i,\ux_{j+1})\in\bZ^m
\]
is non-zero.  Using the estimates of the preceding lemma, it also gives
\begin{align*}
 L_\xi(C_i)
   &\ll H(V_i)L_{j+1}
   \ll X_{j+1}^{-m\alpha}X_{i+1}^{1-\alpha}X_i^{-\alpha},\\
 \norm{C_i}
   &\ll X_{j+1}L_\xi(V_i)+H(V_i)L_{j+1}
    \ll X_{j+1}^{1-m\alpha}X_{i+1}^{-\alpha}X_i^{-\alpha}.
\end{align*}
Using \eqref{odd:lemma1:eq1}, we find that
\begin{align*}
  \norm{C_i}L_\xi(C_i)^\alpha
    \ll X_{j+1}^{1-m\alpha-m\alpha^2}X_{i+1}^{-\alpha^2}X_i^{-\alpha-\alpha^2}
     &= X_{j+1}^{\alpha}X_{i+1}^{-\alpha^2}X_i^{-\alpha/\theta}\\
    &\ll X_{j+1}^{\alpha}X_{i+1}^{-\alpha^2-\alpha}
     = X_{j+1}^{\alpha}X_{i+1}^{-\alpha/\theta}
    \ll 1
\end{align*}
is bounded from above, and that
\begin{align*}
  L_\xi(C_i)
    \ll X_{j+1}^{-m\alpha}X_{i+1}^{1-\alpha}X_i^{-\alpha}
    \ll X_{j+1}^{-m\alpha}X_{i+1}^{1-\alpha-\alpha\theta}
    \le X_{j+1}^{-m\alpha+1-\alpha-\alpha\theta}
     = X_{j+1}^{-\alpha^2\theta}
\end{align*}
tends to $0$ as $i$ goes to infinity.  By Corollary \ref{new:cor}, this implies that
$\hlambda_n(\xi)\le \alpha$.

%
%

\section{Small even degree}
\label{sec:even}

We conclude, in this section, with the proof of Theorem~\ref{intro:thm:pair}.
So, we assume that
\[
 n=2m \geq 4
\]
is an even integer.  For the proof, define $\beta=\beta_m$ to be the single positive root of
\begin{align*}
 Q_m(x)&=\begin{cases}
    1-mx-mx^2-m(m-1)x^3 &\text{if $m\ge3$,}\\
    1-3x+x^2-2x^3-2x^4 &\text{if $m=2$,}
  \end{cases}\\
\intertext{as in the statement of the theorem, then write}
 \gamma&=\begin{cases}
    \disp \frac{m+4}{m^2+6m+2} &\text{if $m\ge4$,}\\[8pt]
    8/33 &\text{if $m=3$,}\\
    1/3 &\text{if $m=2$,}
  \end{cases}\\
\intertext{and define $\delta$ to be the single positive root of}
 R_m(x)&=1-(m+1)x-(m-1)x^2.
\end{align*}
It is a simple matter to check that
\[
 1/(m+2) < \delta < \gamma < \beta.
\]
We will prove that $\hlambda_n(\xi)\le \beta$ through the following chains of implications
\[
\begin{array}{lclcl}
  \hlambda_n(\xi)>\delta
  &\Longrightarrow & \text{$\cP(1,m-1)$ holds}
  &\Longrightarrow & \text{$\cP(2,m-2)$ holds}\\[5pt]
  \hlambda_n(\xi)>\gamma
  &\Longrightarrow & \text{$\cP(2,m-1)$ does not hold}
  &\Longrightarrow & \hlambda_n(\xi)\le \beta.
\end{array}
\]
Recall that $\lambda$ represents a fixed real number with
$0<\lambda<\hlambda_n(\xi)$.

\begin{lemma}
\label{even:lemma1}
Suppose that $\hlambda_n(\xi) > \delta$.  Then both $\cP(1,m-1)$
and $\cP(2,m-2)$ hold.  Moreover, we have
$X_{i+1}^{m\lambda}\ll X_i$  for each $i\ge 0$.
\end{lemma}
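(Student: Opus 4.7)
The plan is to derive the three conclusions in order by repeatedly invoking results already established.

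First, I would verify that $\cP(1,m-1)$ holds by applying Corollary~\ref{P1:cor} to the pair $(n,\ell)=(2m,m-1)$. The hypothesis $1\le \ell < n/2$ becomes $1\le m-1<m$, which is valid since $m\ge 2$. With this choice the polynomial $P(x)=1-(n-\ell)x-\ell x^2$ specializes to $1-(m+1)x-(m-1)x^2=R_m(x)$, so its positive root is exactly $\delta$. The hypothesis $\hlambda_n(\xi)>\delta$ then yields $\cP(1,m-1)$.

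Next, since $m-1\ge 1$, Proposition~\ref{minimal:prop:Pjl} applied to $\cP(1,m-1)$ immediately gives $\cP(2,m-2)$. This step is automatic once the first one is in place.

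Finally, for the inequality $X_{i+1}^{m\lambda}\ll X_i$, I would invoke Lemma~\ref{P1:lemma3} with $\ell=m$ (so $1\le\ell\le n/2$ is satisfied). The required hypothesis $\cP(1,\ell-1)=\cP(1,m-1)$ has just been established; the remaining hypothesis $\hlambda_n(\xi)>1/(2m)$ reduces to showing that $\delta>1/(2m)$. The only real computation needed is this strict inequality: evaluating
\[
 R_m\!\left(\frac{1}{2m}\right)
 =1-\frac{m+1}{2m}-\frac{m-1}{4m^2}
 =\frac{4m^2-2m(m+1)-(m-1)}{4m^2}
 =\frac{(2m-1)(m-1)}{4m^2}>0
\]
for all $m\ge 2$, and noting that $R_m(0)=1>0$ while $R_m$ is eventually negative, places $\delta$ to the right of $1/(2m)$. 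The conclusion of Lemma~\ref{P1:lemma3} then produces $X_{i+1}^{m\lambda}\ll X_i$ for every $i\ge 0$ (with the implicit constant absorbing the finitely many small indices). No step presents any genuine obstacle; the lemma is really a bookkeeping consequence of the tools already assembled, the only mild point being the elementary verification that $\delta>1/(2m)$.
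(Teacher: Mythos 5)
Your proof is correct and follows the paper's own argument step for step: Corollary~\ref{P1:cor} with $\ell=m-1$ to get $\cP(1,m-1)$, Proposition~\ref{minimal:prop:Pjl} to get $\cP(2,m-2)$, and Lemma~\ref{P1:lemma3} with $\ell=m$ to get the growth estimate. The only cosmetic difference is that you check $\delta>1/(2m)$ by directly evaluating $R_m(1/(2m))>0$, whereas the paper simply cites the already-recorded bound $\delta>1/(m+2)$ and observes $m+2\le 2m$ for $m\ge 2$; both are fine.
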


\begin{proof}
For the choice of $\ell=m-1$, we have $1\le \ell<n/2$ and
$R_m(x)=1-(n-\ell)x-\ell x^2$.  Thus, by Corollary \ref{P1:cor}, our
hypothesis implies $\cP(1,m-1)$ which in turn implies $\cP(2,m-2)$,
by the general Proposition \ref{minimal:prop:Pjl}.  Since
$\delta\ge 1/(m+2)\ge 1/(2m)$, the growth estimate follows
from Lemma \ref{P1:lemma3}.
\end{proof}

\begin{lemma}
\label{even:lemma2}
Suppose that $\cP(2,m-1)$ holds and choose $\theta\in(0,1]$ such that
\begin{equation}
\label{even:lemma2:eq}
 \frac{m-1}{\theta}+\theta > \frac{1}{\lambda}-1.
\end{equation}
Then we have $X_{j+1}^\theta \le X_{i+1}$ for any large enough pair of consecutive
elements $i<j$ of $I$.
\end{lemma}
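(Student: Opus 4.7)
The approach is to iterate a growth estimate obtained by combining Lemma \ref{P1:lemma2} with property $\cP(2,m-1)$. First, I would note that for $i\in I$ the three points $\ux_{i-1},\ux_i,\ux_{i+1}$ are linearly independent and so span a 3-dimensional space, which must coincide with $A_2(i-1)$ (any three linearly independent consecutive minimal points in the 3-dimensional space $A_2(i-1)$ form a basis for it). Applying $\cP(2,m-1)$ to $i-1$, for $i$ large enough the space $V_i := \UU^{m-1}(\ux_{i-1},\ux_i,\ux_{i+1}) = \UU^{m-1}(A_2(i-1))$ has dimension at least $m+2$ and therefore equals the full ambient space $\bR^{m+2}$. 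Thus Lemma \ref{P1:lemma2} applies with $e=1$, yielding $H(V_i)\ll X_{j+1}^{-(m-1)\lambda}X_{i+1}^{1-\lambda}X_i^{-\lambda}$ for any pair $i<j$ of consecutive elements of $I$, and combined with $H(V_i)\ge 1$ this gives the basic estimate
\[
    X_{j+1}^{(m-1)\lambda}\,X_i^{\lambda}\ll X_{i+1}^{1-\lambda}.
\]

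Next, I would bootstrap. The first inequality of Lemma \ref{P1:lemma2} applied to a preceding pair $(h,i)$ of consecutive elements of $I$ already gives $X_{i+1}^{\theta_0}\ll X_{h+1}\le X_i$ with $\theta_0=(m-1)\lambda/(1-\lambda)$. Introducing the map $f(x)=(m-1)\lambda/(1-\lambda-x\lambda)$, I would prove by induction on $k$ that, for the sequence defined by $\theta_{k+1}=f(\theta_k)$, one has $X_{j+1}^{\theta_k}\ll X_{i+1}$ for every consecutive pair $(i,j)$ in $I$ with $i$ large enough (the threshold depending on $k$). The inductive step uses the $k$-th estimate applied to the preceding pair $(h,i)$ to extract $X_i\gg X_{i+1}^{\theta_k}$, and plugs it into the basic estimate to get $X_{j+1}^{(m-1)\lambda}\ll X_{i+1}^{1-\lambda-\theta_k\lambda}$, which rewrites as $X_{j+1}^{\theta_{k+1}}\ll X_{i+1}$.

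The crux is the convergence analysis. One checks that $f(x)>x$ precisely when $g(x):=x^2-(1/\lambda-1)x+(m-1)>0$, and the hypothesis $(m-1)/\theta+\theta>1/\lambda-1$ amounts to $g(\theta)>0$. If $g$ has real roots $\theta^-\le\theta^+$ then $\theta_k$ is strictly increasing and converges to $\theta^-$; moreover $\theta^-\theta^+=m-1\ge 1$ forces $\theta^+\ge 1$, so the constraints $\theta\le 1\le\theta^+$ together with $g(\theta)>0$ force $\theta<\theta^-$. If $g$ has no real roots then $\theta_k$ grows without bound, which is actually incompatible with $X_i\le X_{i+1}$ once $\theta_k>1$, so in that regime the hypothesis $\cP(2,m-1)$ itself cannot hold and the lemma is vacuous. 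In either case, one may choose $k$ with $\theta_k>\theta$, and then $X_{j+1}^{\theta_k}\ll X_{i+1}$ combined with $X_{j+1}\to\infty$ absorbs the implicit constant to yield $X_{j+1}^\theta\le X_{i+1}$ for $i$ large enough. The main obstacle is this convergence analysis: organizing the iteration cleanly, verifying that $\theta_k$ stays in the domain of $f$, and ensuring the strict inequality $\theta<\theta^-$ in the generic case.
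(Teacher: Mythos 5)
Your proof is correct, and it takes a genuinely different route from the paper's. The paper argues directly by contradiction at the level of limits: it sets $\mu=\liminf\log(X_{i+1})/\log(X_{j+1})$ over consecutive pairs $i<j$ in $I$, extracts a subsequence of triples $h<i<j$ along which $\log(X_{i+1})/\log(X_{j+1})\to\mu$, observes that $\liminf\log(X_{h+1})/\log(X_{i+1})\ge\mu$ along that subsequence, and passes to the limit in the (logarithmic form of the) height inequality to obtain $(m-1)/\mu+\mu\le 1/\lambda-1$ with $\mu>0$ in one stroke. Then strict monotonicity of $(m-1)/x+x$ on $(0,1]$ gives $\theta<\mu$, which is exactly what is needed. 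Your version reaches the same quadratic inequality, but by iterating the exponent through $\theta_{k+1}=f(\theta_k)$ with $f(x)=(m-1)\lambda/(1-\lambda-x\lambda)$ and proving that the sequence $(\theta_k)$ climbs monotonically to the smaller root $\theta^-$ of $x^2-(1/\lambda-1)x+(m-1)$. The two proofs buy different things: the paper's one-shot liminf argument is shorter and sidesteps any discussion of fixed points or domains, whereas your bootstrap makes explicit the quantitative chain of improvements coming from each fresh application of the height bound, which is arguably more transparent and would adapt more readily to a setting where only finitely many iterations are available.

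A few of the points you flag as needing care are genuinely worth spelling out. First, $\theta_0=f(0)<\theta^-$ follows from $f$ being increasing on $[0,(1-\lambda)/\lambda)$ with fixed point $\theta^->0$, but you should record it. Second, to rule out the no-real-roots case cleanly: the basic estimate $X_{j+1}^{(m-1)\lambda}X_i^\lambda\ll X_{i+1}^{1-\lambda}$, once bootstrapped to $X_{j+1}^{(m-1)\lambda}\ll X_{i+1}^{1-\lambda-\theta_k\lambda}$, is already contradictory the moment $1-\lambda-\theta_k\lambda\le 0$, because then the right-hand side is bounded while the left-hand side diverges; and since $\cP(2,m-1)\Rightarrow\cP(1,m-1)\Rightarrow\hlambda_n(\xi)\le 1/m$, you may take $\lambda<1/m\le 1/2$, so $(1-\lambda)/\lambda>1$ and the sequence must first pass $\theta_k>1$ (itself contradictory, since $X_{j+1}>X_{i+1}\to\infty$). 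Either contradiction shows $\cP(2,m-1)$ forces $g$ to have real roots, so the main case is the only one that needs the delicate analysis. These are all fillable, and once filled the proposal is a correct alternative to the paper's argument.
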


\begin{proof}
Property $\cP(2,m-1)$ means that $\UU^{m-1}(\ux_{i-1},\ux_i,\ux_{i+1})=\bR^{m+2}$
for each large enough $i\in I$.   By definition, it also implies property $\cP(1,m-1)$.
Thus, for all but finitely many triples $h<i<j$ of consecutive elements of $I$, we
obtain, by Lemma \ref{P1:lemma2},
\[
 1=H(\UU^{m-1}(\ux_{i-1},\ux_i,\ux_{i+1}))
   \ll X_{j+1}^{-(m-1)\lambda}X_{i+1}^{1-\lambda}X_{h+1}^{-\lambda}
\]
using the crude estimate $X_i\ge X_{h+1}$.  Taking logarithms, this gives
\[
 (m-1)\frac{\log(X_{j+1})}{\log(X_{i+1})}+\frac{\log(X_{h+1})}{\log(X_{i+1})}
 \le \frac{1-\lambda}{\lambda} + \cO\Big(\frac{1}{\log(X_{i+1})}\Big).
\]
Let $\mu\in[0,1]$ denote the inferior limit of the ratio $\log(X_{i+1})/\log(X_{j+1})$
as $(i,j)$ runs through the pairs of consecutive elements $i<j$ of $I$.  Then, there
is a sequence of triples $(h,i,j)$, with $i$ going to infinity and $h<i<j$ consecutive
in $I$, for which the ratio  $\log(X_{i+1})/\log(X_{j+1})$ converges to $\mu$.
Over that sequence, the inferior limit of $\log(X_{h+1})/\log(X_{i+1})$ is at
least $\mu$, and so the above inequality implies that $\mu>0$ and
\[
 \frac{m-1}{\mu}+\mu \le \frac{1}{\lambda}-1.
\]
As $m\ge 2$, the expression $(m-1)/x+x$ is a strictly decreasing function
of $x$ on $(0,1]$.  We conclude that our choice of $\theta$ satisfies
$0<\theta<\mu$, and so $\theta \le \log(X_{i+1})/\log(X_{j+1})$ for
any pair of consecutive elements $i<j$ of $I$ with $i$ large enough.
\end{proof}

\begin{lemma}
\label{even:lemma3}
Suppose that $\hlambda_n(\xi)>\gamma$.  Then $\cP(2,m-1)$ does not hold.
\end{lemma}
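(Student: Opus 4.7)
\medskip

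\noindent\textbf{Plan for Lemma \ref{even:lemma3}.} The strategy is to argue by contradiction. Suppose $\hlambda_n(\xi)>\gamma$ and that $\cP(2,m-1)$ holds, and choose $\lambda$ with $\gamma<\lambda<\hlambda_n(\xi)$. Since $\gamma>\delta$, Lemma \ref{even:lemma1} gives that $\cP(1,m-1)$ and $\cP(2,m-2)$ hold, together with the growth estimate $X_{i+1}^{m\lambda}\ll X_i$ for all $i\ge 0$. On the other hand, the assumption $\cP(2,m-1)$ lets us invoke Lemma \ref{even:lemma2} with a carefully chosen $\theta\in(0,1]$ satisfying $(m-1)/\theta+\theta>1/\lambda-1$, obtaining $X_{j+1}^\theta\le X_{i+1}$ for all sufficiently large pairs of consecutive elements $i<j$ of $I$. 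The ambient space $\bR^{n-(m-1)+1}=\bR^{m+2}$ then coincides with $V_i:=\UU^{m-1}(\ux_{i-1},\ux_i,\ux_{i+1})$.

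I would next focus on the subspace $U_i:=\UU^{m-1}(\ux_i,\ux_{i+1})\subseteq V_i$, which has dimension $m+1$ or $m+2$ by $\cP(1,m-1)$. The generic case is $\dim(U_i)=m+1$, making $U_i$ a hyperplane in $\bR^{m+2}$. Because $V_i\supsetneq U_i$, we have $\UU^{m-1}(\ux_{i-1})\not\subseteq U_i$, so Proposition \ref{new:prop} applies with $k=m+1$, $\ell=m-1$, $V=U_i$ and $\ux=\ux_{i-1}$, producing a non-zero integer point
\[
 C_i=C(U_i,\ux_{i-1})\in\bZ^m
 \quad\text{with}\quad
 \begin{cases}
   L_\xi(C_i) \ll H(U_i)\,L_\xi(\ux_{i-1}),\\
   \norm{C_i} \ll \norm{\ux_{i-1}}\,L_\xi(U_i)+H(U_i)\,L_\xi(\ux_{i-1}).
 \end{cases}
\]
From Lemma \ref{P1:lemma2} one has $H(U_i)\ll X_{j+1}^{-(m-1)\lambda}X_{i+1}^{1-\lambda}$, and a parallel argument using the explicit integer basis of $U_i$ produced in that proof yields $L_\xi(U_i)\ll L_j^{m}L_{j-1}\ll X_{j+1}^{-m\lambda}X_j^{-\lambda}$. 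Substituting these bounds together with $L_\xi(\ux_{i-1})\ll X_i^{-\lambda}$, $X_{i+1}^{m\lambda}\ll X_i$, and $X_{j+1}^\theta\le X_{i+1}$, and then appealing to Corollary \ref{new:cor} with exponent $\gamma$, I would derive that $L_\xi(C_i)\to 0$ while $\norm{C_i}L_\xi(C_i)^\gamma$ remains bounded. This yields $\hlambda_n(\xi)\le\gamma$, contradicting the initial hypothesis.

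The degenerate case $\dim(U_i)=m+2$ for infinitely many $i$ means $U_i=\bR^{m+2}$ and $H(U_i)=1$; substituted in the height estimate from Lemma \ref{P1:lemma2} this gives $X_{j+1}^{(m-1)\lambda}\ll X_{i+1}^{1-\lambda}$, which combined with the constraint $X_{j+1}^\theta\le X_{i+1}$ and $X_{i+1}^{m\lambda}\ll X_i$ should be exploited directly to force $\lambda\le\gamma$.

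The main obstacle is the combinatorial optimization over $\theta$ and the three scales $X_i,X_{i+1},X_{j+1}$: one must tune $\theta$ as close as Lemma \ref{even:lemma2} permits, then combine the bounds on $\norm{C_i}$ and $L_\xi(C_i)$ with the growth relations in exactly the right way to match the specific root $\gamma=(m+4)/(m^2+6m+2)$. The piecewise definition of $\gamma$ in the statement of Theorem \ref{intro:thm:pair} signals that for the small cases $m=2$ and $m=3$ the estimate on $L_\xi(U_i)$ (and possibly the choice of the auxiliary point $\ux$ in the new construction) needs a separate, more careful treatment — presumably because the basis of $U_i$ admits sharper control when $j-i$ is small, or because the growth constraint $X_{i+1}^{m\lambda}\ll X_i$ is comparatively weak for small $m$.
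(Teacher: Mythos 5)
Your plan is genuinely different from the paper's: the paper's proof of this lemma never invokes the new construction from Section~\ref{sec:new}. Instead, after the $m=2$ case collapses immediately (since $\theta=1$ makes $X_{j+1}\le X_{i+1}$ absurd), the paper for $m\ge 3$ passes to $\cP(3,m-2)$, estimates $H\big(\UU^{m-2}(A_3(i))\big)$ via Proposition~\ref{first:prop} together with Lemmas~\ref{P0:lemma:A} and~\ref{P0:lemma:S}, and then plays this against Corollary~\ref{alt:cor:E} (applied with $j=2$, $\ell=m-1$) and the $\theta$-inequalities of Lemma~\ref{even:lemma2}, ending with the polynomial inequality $3(m-2)\lambda^2\theta\le(1-(m-1)\lambda)(1-2\lambda-\lambda\theta)$, which fails at $\lambda=\gamma$. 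The construction $C(V,\ux)$ only appears later, in Section~\ref{sec:odd} and in the $m=2$ endgame of the proof of Theorem~\ref{intro:thm:pair}.

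Unfortunately, your route does not close as stated, and the failure is not merely a matter of tuning $\theta$. You want to feed $C_i=C(U_i,\ux_{i-1})$ into Corollary~\ref{new:cor}, which requires $L_\xi(C_i)\to 0$. But the bound you quote gives
\[
  L_\xi(C_i)\ \ll\ H(U_i)\,L_{i-1}\ \ll\ X_{j+1}^{-(m-1)\lambda}\,X_{i+1}^{1-\lambda}\,X_i^{-\lambda},
\]
and the only lower bounds available for $X_{j+1}$ and $X_i$ in terms of $X_{i+1}$ are $X_{j+1}\ge X_{i+1}$ and $X_i\gg X_{i+1}^{m\lambda}$ (Lemma~\ref{even:lemma1}); the constraint from Lemma~\ref{even:lemma2} bounds $X_{j+1}$ from \emph{above}, which works in the wrong direction here. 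Substituting, one gets $L_\xi(C_i)\ll X_{i+1}^{\,1-m\lambda-m\lambda^2}$, and for $\lambda$ near $\gamma=(m+4)/(m^2+6m+2)$ this exponent is strictly positive for every $m\ge 4$ (one checks $m\gamma+m\gamma^2=m(m+1)(m+4)(m+6)/(m^2+6m+2)^2<1$, since the denominator exceeds the numerator by $m^3+6m^2+4$). So $L_\xi(C_i)$ is not small and Corollary~\ref{new:cor} simply does not apply. The trouble is the choice of auxiliary point: $\ux_{i-1}$ has the \emph{largest} $L_\xi$ among nearby minimal points, so $H(U_i)L_\xi(\ux_{i-1})$ cannot be forced to zero. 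In the places the paper does use this construction (Section~\ref{sec:odd}, and the $m=2$ endgame) it feeds in a \emph{later} point such as $\ux_{j+1}$, whose $L_\xi$ is as small as one likes — but then one must separately verify $\UU^{m-1}(\ux_{j+1})\not\subseteq U_i$, which does not follow from $V_i\supsetneq U_i$. Your fallback for the degenerate case $\dim(U_i)=m+2$ has the same problem: $H(U_i)=1$ combined with Lemma~\ref{P1:lemma2} yields only $m\lambda\le 1$, i.e.\ $\lambda\le 1/m$, which is weaker than $\lambda\le\gamma$ since $\gamma<1/m$, and the $\theta$-constraint again has the wrong sign to improve it. So the core idea — replacing the paper's height machinery with the new construction applied to $(U_i,\ux_{i-1})$ — needs a different auxiliary point and a genuinely new argument showing why that point escapes $U_i$.
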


\begin{proof}
Suppose on the contrary that $\cP(2,m-1)$ holds.  By the hypothesis,
we may assume that $\lambda>\gamma$.  A short
computation shows that $(m-1)/\theta+\theta\ge 1/\gamma-1$ for
\[
 \theta=\begin{cases}
   1 &\text{if $m=2$,}\\
   15/17 &\text{if $m=3$,}\\
   (m+4)/(m+5) &\text{if $m\ge 4$.}
   \end{cases}
\]
Thus this choice of $\theta$ fulfills the main condition \eqref{even:lemma2:eq}
of Lemma \ref{even:lemma2}, and so we have $X_{j+1}^\theta\le X_{i+1}$
for each large enough pair of consecutive elements $i<j$ of $I$.  For $m=2$, this
becomes $X_{j+1}\le X_{i+1}$ which
is already a contradiction because the sequence $(X_i)_{i\ge 0}$ is strictly increasing.
Thus, we may assume that $m\ge 3$.

Since $\cP(2,m-1)$ holds, Proposition \ref{minimal:prop:Pjl} implies
that  $\cP(3,m-2)$ holds as well.  So, for each large enough $i$,  the subspace
$V_i=\UU^{m-2}(A_3(i))$ of $\bR^{m+3}$ has dimension at least $m+2$
and Proposition \ref{first:prop} gives
\begin{equation}
\label{even:lemma3:eq1}
 H(V_i)\ll Y_2(i)^{1-(m-1)\lambda}Y_1(i)^{-\lambda}Y_0(i)^{-(1+e(i))\lambda}
\end{equation}
where $e(i)=\dim(V_i)-m-2\in\{0,1\}$.  If $V_i\neq \bR^{m+3}$ for infinitely many
$i$, then Lemma \ref{P0:lemma:A} provides arbitrarily large $i\in I$ for which
$\UU^{m-2}(\ux_{i-1})\not\subseteq V_i$.  For those $i$, we have $e(i)=0$ and
Lemma \ref{P0:lemma:S} gives $1\ll H(V_i)L_{i-1}$.  Then, we obtain
\begin{equation}
\label{even:lemma3:eq2}
 1 \ll Y_2(i)^{1-(m-1)\lambda}Y_1(i)^{-\lambda}Y_0(i)^{-\lambda}X_i^{-\lambda}.
\end{equation}
Otherwise, we have $e(i)=1$ for all sufficiently large $i\in I$ and the above estimate
follows directly from \eqref{even:lemma3:eq1} since $1\le H(V_i)$ and
$Y_0(i)=X_{i+1}>X_i$.  Thus \eqref{even:lemma3:eq2} holds for infinitely many
$i\in I$.  Viewing such $i$ as part of a triple of consecutive elements
$h<i<j$ of $I$, we have $Y_1(i)=X_{j+1}$, $Y_0(i)=X_{i+1}$ and
$X_i\ge X_{h+1}$, thus
\begin{equation}
\label{even:lemma3:eq3}
 Y_1(i)^\theta\ll Y_0(i) \et Y_0(i)^\theta\ll X_i
\end{equation}
by our initial observation at the beginning of the proof.
So, we deduce that
\begin{equation}
\label{even:lemma3:eq4}
 Y_2(i)^{1-(m-1)\lambda}
  \gg Y_1(i)^{\lambda(1+\theta+\theta^2)} \gg Y_1(i)^{3\lambda\theta}
\end{equation}
for arbitrarily large $i\in I$.  Since $m\ge 3$, we may also apply Corollary
\ref{alt:cor:E} with $j=2$ and $\ell=m-1$.  Using \eqref{even:lemma3:eq3},
this gives
\[
 Y_2(i)^{(m-2)\lambda}
  \ll Y_1(i)^{1-2\lambda}Y_0(i)^{-\lambda}
  \ll Y_1(i)^{1-2\lambda-\lambda\theta}
\]
for each $i\in I$.  Substituting this upper bound for $Y_2(i)$ into
\eqref{even:lemma3:eq4} and then comparing powers of $Y_1(i)$, we conclude
that
\[
 3(m-2)\lambda^2\theta \le (1-(m-1)\lambda)(1-2\lambda-\lambda\theta),
\]
and thus $3(m-2)\theta \le (1/\gamma-(m-1))(1/\gamma-2-\theta)$,
as $\lambda$ can be taken arbitrarily close to $\gamma$.  However, this inequality
is false for the actual values of $\gamma$ and $\theta$.  This contradiction shows
that $\cP(2,m-1)$ does not hold.
\end{proof}

\subsection*{Proof of Theorem \ref{intro:thm:pair}}
We may assume that $\gamma<\lambda<\hlambda_n(\xi)$.  Then $\cP(1,m-1)$ and
$\cP(2,m-2)$ hold while $\cP(2,m-1)$ does not hold, by Lemmas \ref{even:lemma1}
and \ref{even:lemma3}.   In particular, Proposition \ref{P0:prop:I} applies
with $j=2$ and $\ell=m-1$, and so there are infinitely many integers $i\ge 1$
for which
\begin{equation}
\label{even:proof:eq1}
  \dim\UU^{m-1}(A_2(i))=m+1
  \et
  \UU^{m-1}(\ux_{i-1})
     \not\subseteq \UU^{m-1}(A_2(i))
     \varsubsetneq \bR^{m+2}.
\end{equation}
For those $i$, Lemma \ref{P0:lemma:S} and Proposition \ref{P0:prop:I}
further give
\[
 1\ll H(\UU^{m-1}(A_2(i)))L_{i-1}
  \et
  1\ll H(\UU^{m-2}(A_2(i)))L_{i-1}^2.
\]
Any such $i$ belongs to $I$ and, upon denoting by $j$ its successor in $I$,
Proposition \ref{first:prop} gives
\[
  H(\UU^{m-2}(A_2(i)))
    \ll Y_1(i)^{1-(m-1)\lambda}Y_0(i)^{-\lambda}
     = X_{j+1}^{1-(m-1)\lambda}X_{i+1}^{-\lambda}.
\]
By $\cP(1,m-1)$, we also have $\dim\UU^{m-1}(A_1(i))\ge m+1$
if $i$ is large enough.  Comparing with \eqref{even:proof:eq1}, this
implies that $\UU^{m-1}(A_2(i))=\UU^{m-1}(A_1(i))$ and so Lemma
\ref{P1:lemma2} gives
\[
  H(\UU^{m-1}(A_2(i)))
   =H(\UU^{m-1}(A_1(i)))
    \ll X_{j+1}^{-(m-1)\lambda}X_{i+1}^{1-\lambda}.
\]
By Lemma \ref{even:lemma1}, we also have $L_{i-1}\ll X_i^{-\lambda}
\ll X_{i+1}^{-m\lambda^2}$.  Combining all the above inequalities, we obtain
\begin{equation}
\label{even:proof:eq2}
  1\ll X_{j+1}^{-(m-1)\lambda}X_{i+1}^{1-\lambda-m\lambda^2}
  \et
  1\ll X_{j+1}^{1-(m-1)\lambda}X_{i+1}^{-\lambda-2m\lambda^2}.
\end{equation}
As we can take $i$ arbitrarily large, this in turn implies that
\[
 0\le (1-(m-1)\lambda)(1-\lambda-m\lambda^2)-(m-1)\lambda(\lambda+2m\lambda^2).
\]
If $m\ge 3$, the right hand side of this inequality simplifies to $Q_m(\lambda)$.
So, in that case, we obtain $\lambda\le \beta$, thus $\hlambda_n(\xi)\le \beta=\beta_m$
as needed.

For the case $m=2$, we look more closely at the vector spaces
\[
 V_i=\UU^1(A_1(i-1))=\UU^1(\ux_{i-1},\ux_{i})\subseteq\bR^4
\]
for each integer $i\ge 1$.  Since $\cP(1,1)$ holds, we have $\dim\UU^1(\ux_i)=2$
and $\dim(V_i)\ge 3$ for each large enough $i$.  When $V_i=\bR^4$, we find
$1=H(V_i)\ll X_iL_{i-1}^3\ll X_i^{1-3\lambda}$ since $V_i$ is generated by
points $\uy\in\bZ^4$ with $\norm{\uy}\le X_i$ and $L_\xi(\uy)\ll L_{i-1}$.
As $\lambda>\gamma=1/3$, we conclude that both $\dim\UU^1(\ux_i)=2$
and $\dim(V_i)=3$ for each large enough $i$, say for $i\ge i_0$.  Then, $V_i$
admits a basis of the form $\{\ux_{i-1}^{(p,1)},\ux_i^{(0,1)},\ux_i^{(1,1)}\}$
for some $p\in\{0,1\}$, thus
\begin{equation*}
\label{even:proof:eq3}
 H(V_i)\ll X_iL_iL_{i-1}\ll X_{i+1}^{-\lambda}X_i^{1-\lambda}
 \et
 L_\xi(V_i)\ll L_i^2L_{i-1}\ll X_{i+1}^{-2\lambda}X_i^{-\lambda}.
\end{equation*}
For each $i\ge i_0$ for which \eqref{even:proof:eq1} holds, we have
$\UU^1(\ux_{i-1})\not\subseteq V_{i+1}$, thus $V_i\cap V_{i+1}=\UU^1(\ux_i)$
and so $\UU^1(\ux_{i+1})\not\subseteq V_i$.  Then, by Proposition \ref{new:prop},
the point $C_i=C(V_i,\ux_{i+1})\in\bZ^2$ is non-zero with
\[
 \norm{C_i}
   \ll H(V_i)L_{i+1}+X_{i+1}L_\xi(V_i)
   \ll X_{i+1}^{1-2\lambda}X_i^{-\lambda}.
\]
Moreover, by Lemma \ref{minimal:lemma:HA1}, the pair $\{\ux_i,\ux_{i+1}\}$
is a basis of $A_1(i)\cap\bZ^5$ over $\bZ$. So, letting $j$ denote the successor
of $i$ in $I$, we may write $\ux_j=a\ux_i+b\ux_{i+1}$ for some $a,b\in\bZ$
with $b\neq 0$.  Since $C(V_i,\ux)$ is linear in $\ux$ (for a fixed basis of $V_i$)
and since $C(V_i,\ux_i)=0$, we find that $C(V_i,\ux_j)=b C_i$.  Thus, by
Proposition \ref{new:prop}, we obtain
\[
 L_\xi(C_i)
  \le L_\xi(C(V_i,\ux_j))
  \ll H(V_i)L_j
  \ll X_{j+1}^{-\lambda}X_{i+1}^{-\lambda}X_i^{1-\lambda}.
\]
In particular, $L_\xi(C_i)\ll X_i^{1-3\lambda}$ converges to $0$ as $i\to\infty$, since
$\lambda>1/3$.  Thus, by Corollary \ref{new:cor}, the product $\norm{C_i}L_\xi(C_i)^{\lambda}$ tends to infinity with $i$.  So we have
\[
 1 \ll \norm{C_i}L_\xi(C_i)^{\lambda}
   \ll X_{j+1}^{-\lambda^2}X_{i+1}^{1-2\lambda-\lambda^2}X_i^{-\lambda^2}.
\]
Using the estimate  $X_{i+1}^{2\lambda}\ll X_i$ from Lemma \ref{even:lemma1},
we conclude that
\begin{equation}
\label{even:proof:eq4}
 X_{j+1}^{\lambda^2} \ll X_{i+1}^{1-2\lambda-\lambda^2-2\lambda^3}
\end{equation}
for each pair of consecutive elements $i<j$ of $I$ with $i\ge i_0$, for which
\eqref{even:proof:eq1} holds.  For these, the two estimates \eqref{even:proof:eq2}
also apply.  In particular, the second one yields
\[
  X_{i+1}^{\lambda+4\lambda^2} \ll X_{j+1}^{1-\lambda}.
\]
Combining this with \eqref{even:proof:eq4}, we conclude that
$\lambda^2(\lambda+4\lambda^2)\le (1-\lambda)(1-2\lambda-\lambda^2-2\lambda^3)$,
which simplifies to $Q_2(\lambda)\ge 0$.  This gives $\lambda\le \beta$ and thus
$\hlambda_4(\xi)\le \beta=\beta_2$.


\end{document}